\definecolor{hypercolor}{HTML}{003399}
\newtheorem{thm}{Theorem}[section]
\newtheorem{lem}[thm]{Lemma}
\newtheorem{prop}[thm]{Proposition}
\newtheorem{cor}[thm]{Corollary}
\theoremstyle{definition}
\newtheorem{conv}[thm]{Convention}
\newtheorem*{notation}{Notation for Theorem~\ref{t.matching}}
\theoremstyle{remark}
\newtheorem{assu}[thm]{Assumption}
\newtheorem{rmk}[thm]{Remark}
\numberwithin{equation}{section}
\newcommand{\betaloc}{\beta\text{-}\mathrm{loc}}% beta-localized
\newcommand{\Start}{\mathrm{sc}}			% starting condition
\newcommand{\End}{\mathrm{ec}}				% ending condition
\newcommand{\comple}{\mathrm{c}}			% complement
\newcommand{\old}{\text{\tiny{}old}}			
\newcommand{\new}{\text{\tiny{}new}}	
\newcommand{\sprime}{{\scriptscriptstyle\prime}}
\newcommand{\e}{\varepsilon}
\newcommand{\calD}{\mathcal{D}}
\newcommand{\calE}{\mathcal{E}}
\newcommand{\calF}{\mathcal{F}}
\newcommand{\calU}{\mathcal{U}}
\newcommand{\R}{\mathbb{R}}
\newcommand{\Z}{\mathbb{Z}}
\newcommand{\Csp}{\mathscr{C}}
\newcommand{\Cbsp}{\mathscr{C}_\mathrm{b}}
\newcommand{\Ccsp}{\mathscr{C}_\mathrm{c}}
\newcommand{\Lsp}{\mathcal{L}}
\newcommand{\Psp}{\mathscr{P}}				% the space of probability measures
\newcommand{\rate}{\mathcal{I}}				% for attractive BM
\newcommand{\rateM}{\Lambda}				% the log moment generating function for \rate
\newcommand{\rateq}{\mathbb{I}}				% quantile representation for attractive BM
\newcommand{\mom}{\mathbb{L}}				% moment Lyapunov
\newcommand{\momshe}{L_{\scriptscriptstyle\mathrm{SHE}}}% moment Lyapunov, she
\newcommand{\ratekpz}{I_{\scriptscriptstyle\mathrm{KPZ}}}
\newcommand{\ratebm}{I_{\scriptscriptstyle\mathrm{BM}}}
\newcommand{\hf}[1]{\mathsf{f}_{\star#1}}	% terminal height function
\newcommand{\E}{\mathbb{E}}
\renewcommand{\P}{\mathbb{P}}
\newcommand{\EE}{\mathbf{E}}				% expectation over noise
\newcommand{\law}{\mathbb{Q}}				% a generic law
\newcommand{\Ebm}{\E_{\scriptscriptstyle\mathrm{BM}}}	
\newcommand{\Pbm}{\P_{\scriptscriptstyle\mathrm{BM}}}	
\newcommand{\termt}{\mathfrak{t}}			% terminal time
\newcommand{\termts}{\termt\hspace{1pt}}	% terminal time + \hspace{1pt}
\newcommand{\intermt}{\mathfrak{t}^{\sprime}}% intermediate time
\newcommand{\xx}{\mathbf{x}}				% points of conditioning
\newcommand{\vecxx}{\vec{\xx}}				% vector \xx
\newcommand{\xxi}{\mathbf{x}^{\sprime}}		% points of conditioning, intermediate time
\newcommand{\vecxxi}{\vec{\mathbf{x}}{}^{\hspace{1pt}\sprime}}%vector \xxi
\newcommand{\mm}{\mathfrak{m}}				% mass
\newcommand{\vecmm}{\vec{\mm}}				% vector \mm
\newcommand{\mmi}{\mm^{\hspace{1pt}\sprime}}% mass, intermediate time
\newcommand{\vecmmi}{\vecmm{}^{\sprime}}	% vector \mmi
\newcommand{\MM}{M}							% cumulative mass
\newcommand{\hv}{\mathbf{r}}				% height values
\newcommand{\vechv}{\vec{\hv}}				% vector \hv
\newcommand{\vechvi}{\vec{\hv}{}^{\hspace{1pt}\sprime}}% vector \hvi
\newcommand{\hvspace}{\mathscr{R}}			% set of \hv
\newcommand{\hvspacec}{\hvspace_\mathrm{conc}}% \hvspace_concave
\newcommand{\sgn}{\mathrm{sgn}}				% the sign function
\newcommand{\Sgn}{\Phi}						% integrating \sgn wrt to a given measure
\newcommand{\quant}{\mathfrak{X}}			% quantile or inverse cdf
\newcommand{\cdf}{F}						% cdf
\newcommand{\pull}{\phi}					% drift
\newcommand{\jj}{j}							% the particle index
\newcommand{\ii}{i}							% the particle index
\newcommand{\cc}{\mathfrak{c}}				% the cluster index
\newcommand{\group}{\mathfrak{I}}			% the group of particles that share a cluster
\newcommand{\velo}{w}						% velocity function in initial and transition segments
\newcommand{\optimal}{\boldsymbol{\xi}}		% optimal deviation
\newcommand{\optimalc}[1]{\boldsymbol{\xi}_{#1}}% optimal cluster
\newcommand{\dotoptimalc}[1]{\dot{\boldsymbol{\xi}}_{#1}}
\newcommand{\inerc}[1]{\boldsymbol{\zeta}_{#1}}% inertia cluster
\newcommand{\bb}{\mathfrak{b}}
\newcommand{\Branch}{\mathfrak{B}}
\newcommand{\diner}{v}						% drift applied to the inertia cluster
\renewcommand{\aa}{\mathfrak{a}}			% the cluster index, intermediate time
\newcommand{\Aranch}{\mathfrak{C}}			% the Branch for \aa
\newcommand{\shock}[1]{\boldsymbol{\sigma}_{#1}}% shock
\newcommand{\dotshock}[1]{\dot{\boldsymbol{\sigma}}_{#1}}% shock
\newcommand{\Branchs}{\mathfrak{B}_\star}	% Branch set defined by cones
\newcommand{\cone}{\mathcal{C}}				% the conic regime
\newcommand{\tangent}{\mathbf{y}}			% tangent points
\newcommand{\scale}{\mathfrak{S}}
\newcommand{\shift}{\mathsf{T}}
\newcommand{\HL}{\mathrm{HL}}
\newcommand{\HLop}{\mathrm{HL}^{\mathrm{bk}}}
\newcommand{\ind}{\mathbf{1}}				% the indicator function
\newcommand{\supp}{\mathrm{supp}}			% support
\renewcommand{\d}{\mathrm{d}}
\newcommand{\gdstate}{\psi_{\scriptscriptstyle\mathrm{gd}}}% ground state 
\newcommand{\noise}{\eta}					% spacetime white noise
\newcommand{\parab}{\mathsf{p}}				% p(t,x) = -x^2/(2t)
\newcommand{\f}{\mathsf{f}}
\newcommand{\g}{\mathsf{g}}
\newcommand{\h}{\mathsf{h}}
\renewcommand{\u}{\mathsf{u}}
\newcommand{\mps}{\h_\mathrm{\star}}		% most probable shape
\newcommand{\mpu}{\mathsf{u}_\mathrm{\star}}% most probable u
\newcommand{\mpuu}[1]{\mathsf{u}_{\star,#1}}				
\newcommand{\hh}{h}							% solution of KPZ
\newcommand{\ZZ}{Z}							% solution of SHE
\newcommand{\EM}{\boldsymbol{\mu}}			% empirical measure
\newcommand{\tEM}{\boldsymbol{\nu}}			% truncated empirical measure
\newcommand{\X}{X}							% the canonical process in Feynman--Kac
\newcommand{\Y}{Y}							% the particles
\newcommand{\bm}{B}							% the BM that drives \X
\newcommand{\tparti}{\Gamma}				% partition in time
\newcommand{\control}{\mathcal{C}}			% the controlling event, linear segment
\newcommand{\controll}{\mathcal{C}'}		% the controlling event, transition segment
\newcommand{\controlll}{\mathcal{C}''}		% the controlling event, s_0 segment
\newcommand{\cost}{\mathrm{Cost}}
\newcommand{\DIST}{\mathfrak{D}\mathrm{ist}}% max of |\X_i - \xi_{\cc}|
\newcommand{\Dist}{\mathrm{Dist}}			% localizing \X
\newcommand{\dist}{\mathrm{dist}}			% metric on \Psp
\newcommand{\wass}{\mathcal{W}}				% Wasserstein metric
\newcommand{\norm}[1]{\Vert #1\Vert}
\newcommand{\ip}[1]{\langle #1\rangle}
\newcommand{\Ip}[1]{\big\langle #1\big\rangle}
\renewcommand{\bar}{\overline}
\newcommand{\und}{\underline}
\newcommand{\til}{\widetilde}
\newcommand*{\Cdot}{{\raisebox{-0.5ex}{\scalebox{1.8}{$\cdot$}}}} % largedot
\title{High moments of the SHE in the clustering regimes}
\author{Li-Cheng Tsai}
\address[Li-Cheng Tsai]{\hspace{1.5pt} Department of Mathematics, University of Utah}
\subjclass[2020]{}%
\keywords{}
\begin{document}
\begin{abstract}
We analyze the high moments of the Stochastic Heat Equation (SHE) via a transformation to the attractive Brownian Particles (BPs), which are Brownian motions interacting via pairwise attractive drift.
In those scaling regimes where the particles tend to cluster, we prove a Large Deviation Principle (LDP) for the empirical measure of the attractive BPs.
Under the delta(-like) initial condition, we characterize the unique minimizer of the rate function and relate the minimizer to the spacetime limit shapes of the Kardar--Parisi--Zhang (KPZ) equation in the upper tails.
The results of this paper are used in the companion paper \cite{lin23} to prove an $n$-point, upper-tail LDP for the KPZ equation and to characterize the corresponding spacetime limit shape.
\end{abstract}

\maketitle

\section{Introduction}
\label{s.intro}
This paper is motivated by the study of the large deviations and spacetime limit shapes of the Kardar--Parisi--Zhang (KPZ) equation.
Introduced in \cite{kardar86}, the KPZ equation 
\begin{align}
	\label{e.kpz}
	&&
	\partial_t h = \tfrac12 \partial_{xx} h + \tfrac12 (\partial_{x} h)^2 + \noise,
	&&
	h=h(t,x),
	\ \ 
	(t,x)\in (0,\infty)\times\R
\end{align} 
describes the evolution of a randomly growing interface, where $\noise=\noise(t,x)$ denotes the spacetime white noise.
This equation plays a central role in nonequilibrium statistical mechanics and has been widely studied in mathematics and physics; we refer to \cite{quastel2011introduction,corwin2012kardar,quastel2015one,chandra2017stochastic,corwin2019} for reviews on the mathematical literature related to the KPZ equation.

This paper and the companion paper \cite{lin23} use the moments of the Stochastic Heat Equation (SHE) to obtain a Large Deviation Principle (LDP) and the corresponding spacetime limit shape of the KPZ equation.
Recall that the SHE
\begin{align}
	\label{e.she}
	\partial_t Z = \tfrac12 \partial_{xx} Z + \noise Z
\end{align}
gives the solution of \eqref{e.kpz} via $h:=\log Z$, so the moment generating function of the KPZ equation is related to the moments of the SHE.
This relation serves as a gateway to studying the KPZ equation, since the moments of the SHE are accessible from a number of tools, including the delta Bose gas and Feynman--Kac formula.
At the one-point level, this relation has led to fruitful results on the one-point upper tail of the KPZ equation~\cite{chen15,corwin20general,das21,das21iter,lin21half,ghosal20}.

The goal of this paper is to characterize the \emph{multi-point} moment Lyapunov exponents 
\begin{align}
	\label{e.momentL}
	\lim_{N\to\infty}
	\frac{1}{N^3T_N} \log \EE\Big[ \prod_{\cc=1}^n Z(T\termt,NT\xx_\cc)^{N\mm_\cc} \Big]
\end{align}
for all positive powers, $N\mm_\cc\in(0,\infty)$, and for fixed $\xx_1<\ldots<\xx_n\in\R$.
Hereafter $T=T_N$ is the scale of time, and $N$ is the scale of the powers.
The only conditions we impose on $N$ and $T=T_N$ are 
\begin{align}
	\label{e.scaling}
	N\to\infty,
	\qquad
	N^2 T = N^2 T_N \to \infty.	
\end{align}
Our results actually hold for any $T=T_{N,A}:=A/N^2$ with $(N,A)\to(\infty,\infty)$; we take $T=T_N$ to depend only on $N$ for the convenience of notation.
Note that \eqref{e.scaling} allows $T_N\to 0$, $T_N\to 1$, and $T_N\to\infty$.
The second condition in \eqref{e.scaling} underscores what we call the clustering behaviors of the attractive Brownian Particles, as explained in the paragraph after next.
The first condition in \eqref{e.scaling} reduces the task of characterizing \eqref{e.momentL} for positive powers, $N\mm_\cc\in(0,\infty)$, to that of integer powers.
Indeed, $N\mm_\cc\in\Z_{>0}$ translates into $\mm_\cc\in\frac{1}{N}\Z_{>0}$, which becomes a denser and denser subset of $(0,\infty)$ as $N\to\infty$.

Let us briefly describe our methods and results; the full description will be given in Section~\ref{s.results}.
To analyze the moment Lyapunov exponents, we express the integer moments of the SHE in terms of a system of attractive Brownian Particles (BPs), which are Brownian Motions (BMs) interacting via pairwise attractive drift.
The task of analyzing the moment Lyapunov exponents turns into proving the LDP for the attractive BPs.
Theorem~\ref{t.ldp} gives the sample-path LDP for the empirical measure of the attractive BPs.
Next, we specialize the initial condition into the delta-like initial condition (defined in \eqref{e.deltalike.ic}).
Theorem~\ref{t.optimal} explicitly characterizes the unique minimizer of the rate function (of the LDP for the attractive BPs) under the delta-like initial condition.
We call the minimizer the \textbf{optimal deviation}.
Corollary~\ref{c.momentL} gives the limit of \eqref{e.momentL} under the delta-like initial condition.
Finally, in Theorem~\ref{t.matching}, we show how the moment Lyapunov exponents and optimal deviation are related to the corresponding rate function and limit shape of the KPZ equation. 
The results of this paper are used in the companion paper \cite{lin23} to prove an $n$-point, upper-tail LDP for the KPZ equation and to characterize the corresponding spacetime limit shape. 

Under \eqref{e.scaling}, the attractive BPs exhibit what we call the \textbf{clustering behavior}.
Two effects contribute to the evolution of the attractive BPs: the attractive drift and diffusive effect.
The drift pulls the particles together, while the diffusive effect spreads them out.
As will be explained in Section~\ref{s.results.ldp}, under the second condition in \eqref{e.scaling}, the attractive drift dominates the diffusive effect, so the BPs tend to cluster.
Theorem~\ref{t.optimal} (also Theorem~\ref{t.matching}\ref{t.matching.shape}) shows that, under the delta-like initial condition, the optimal deviation consists of a number of clusters and explicitly describes the spacetime trajectories of the clusters.
In the context of the upper-tail LDPs for the KPZ equation, the clustering behavior (of the attractive BPs) corresponds to what is called the noise-corridor effect in \cite[Sect.\ 1.3]{lin23}.
Those noise corridors are exactly the trajectories of the clusters.

Our proof does not rely on integrability or explicit formulas.
Thanks to the integrability of the delta Bose gas, moments of the SHE enjoy explicit formulas.
The formulas offer a potential path to obtain the limit in \eqref{e.momentL}, but they do not seem to provide information for proving a localization result like Corollary~\ref{c.momentL}\ref{c.momentL2}, which is a crucial technical input for the proof in \cite{lin23}. 
Still, it is interesting to see if one can obtain the limit in \eqref{e.momentL} also from the formulas.

Let us compare \eqref{e.scaling} with two commonly-considered scaling regimes.
First, $T_N=A/N^2$, with $A<\infty$ fixed and $N\to\infty$, corresponds to the Freidlin--Wentzell/weak-noise LDP for the KPZ equation.
At the level of the attractive BPs, this is the diffusive regime, and the LDP is proven in \cite{dembo2016large} for a general class of rank-based diffusions that includes the attractive BPs.
The LDP in \cite{dembo2016large} and the LDP proven here are very different in nature, as will be explained in Section~\ref{s.results.ldp}.
Next, $N=1$ and $T\to\infty$ correspond to the hyperbolic scaling regime, in long time, of the KPZ equation, namely $\hh_T(t,x):=T^{-1}\hh(Tt,Tx)$.
This is perhaps the most natural scaling regime in long time, while \eqref{e.scaling} allows us to probe any deviation much larger than those in the hyperbolic regime.
For the hyperbolic scaling regime, since $N^2T=T\to\infty$, we also expect the clustering behaviors to happen.
One may seek to generalize our approach to obtain the multipoint, positive-integer moment Lyapunov exponents in this regime.
This has been achieved by \cite{lin2023multi}, based on exact formulas as well as ideas about the optimal clusters from Section~\ref{s.results.momentL} of this paper.

We end the introduction with a brief literature review.
The moments of the SHE and its variant have been used to study the intermittency property \cite{gartner1990parabolic,gartner2007geometric,khoshnevisan14}, large deviations, and the density function of the SHE (and its variants) in \cite{conus2013chaotic,georgiou2013large,borodin2014moments,chen15,chen2015moments,khoshnevisan17,janjigian15,hu2018asymptotics,corwin20general,chen2021regularity,das21,das21iter,lin21half}.
The connection between the delta Bose gas and attractive BPs has been used in the physics work~\cite{ledoussal2022ranked} to study their relaxation properties and their hydrodynamic limits.
The attractive BPs is a special case of diffusions with rank-based drifts, or rank-based diffusions for short.
We refer to 
\cite{%
fernholz02,%
banner2005atlas,%
pal08,%
fernholz09,%
ichiba2010collisions,%
banner2011hybrid,%
shkolnikov11,%
ichiba2013strong,%
sarantsev2015triple,%
dembo2016large,%
dembo2017equilibrium,%
ichiba2017yet,%
sarantsev2017infinite,%
sarantsev2017two,%
sarantsev2017stationary,%
tsai2018stationary,%
cabezas2019brownian,%
dembo2019infinite,%
sarantsev2019camparison,
banerjee2022domains,
banerjee2023dimension%
}
and the references therein for the literature on rank-based diffusions.
Recently, there has been much interest in the LDPs for the KPZ equation in mathematics and physics.
Several strands of methods produce detailed information on the one-point tail probabilities and the one-point rate function.
This includes the physics works \cite{ledoussal16short,ledoussal16long,krajenbrink17short,sasorov17,corwin18,krajenbrink18half,krajenbrink18simple,krajenbrink18systematic,krajenbrink18simple,krajenbrink19thesis,krajenbrink19four,ledoussal19kp}, the simulation works \cite{hartmann18,hartmann19,hartmann20,hartmann21}, and the mathematics works \cite{corwin20general,corwin20lower,cafasso21airy,das21,das21iter,kim21,lin21half,cafasso22,ghosal20,ganguly22,tsai22exact}.
For the Freidlin--Wentzell/weak-noise LDP, behaviors of the one-point rate function and the corresponding most probable shape(s) for various initial conditions and boundary conditions have been predicted \cite{kolokolov07, kolokolov08, kolokolov09, meerson16, kamenev16, meerson17, meerson18, smith18exact, smith18finitesize, asida19, smith19}, some of which recently proven \cite{lin21,lin22,gaudreaulamarre2023kpz}; an intriguing symmetry breaking and second-order phase transition has been discovered in~\cite{janas16,smith18} via numerical means and analytically derived in \cite{krajenbrink17short,krajenbrink22flat}; a connection to integrable PDEs is recently established and studied in the physics works \cite{krajenbrink20det,krajenbrink21,krajenbrink22flat,krajenbrink23} and the mathematically rigorous work \cite{tsai22}.

\subsection*{Outline}
In Section~\ref{s.results}, we state the results and present some discussions.
In Section~\ref{s.basic}, we introduce some notation, definitions, and tools.
Sections~\ref{s.ratefn}--\ref{s.lwbd} make up the proof of Theorem~\ref{t.ldp}, the LDP for the attractive BPs: We establish properties of the rate function in Section~\ref{s.ratefn}, prove the LDP upper bound in Section~\ref{s.upbd}, and prove the LDP lower bound in Section~\ref{s.lwbd}.
In Section~\ref{s.momentL}, we specialize the setting into the delta-like initial condition and establish results on the moment Lyapunov exponents, stated as Theorem~\ref{t.optimal} and Corollary~\ref{c.momentL}.
Finally, in Section~\ref{s.matching}, we prove Theorem~\ref{t.matching} that relates the moment Lyapunov exponent and optimal deviation to the rate function and spacetime limit shape of the KPZ equation. 

\subsection*{Acknowledgments}
I thank Ivan Corwin and Jeremy Quastel for showing me the derivation described in Remark~\ref{r.conjugation} and thank Ivan Corwin for suggesting the possibility of extracting tail information from the exact formulas of the integer moments of the SHE.
The knowledge has inspired me to pursue this work.
I thank Jeremy Quastel for many useful discussions about the large deviations of KPZ-related models and thank Yau-Yuan Mao for the help with some of the figures in this paper.
I thank the anonymous reviewers for their careful reading of and useful comments on the manuscript.
The research of Tsai was partially supported by the NSF through DMS-2243112 and the Alfred P. Sloan Foundation through the Sloan Research Fellowship FG-2022-19308.

\section{Results and discussions}
\label{s.results}

\subsection{Moment Lyapunov exponents, attractive BPs}
\label{s.results.aBP}
The first goal of this paper is to prove that the $n$-point Lyapunov exponent \eqref{e.momentL} exists and to characterize it, for suitable initial conditions for $\ZZ$.
In \eqref{e.momentL} and hereafter, $\termt\in(0,\infty)$, $\xx_1<\ldots<\xx_n\in\R$, and $\mm_1,\ldots,\mm_n\in(0,\infty)$.
Throughout this paper we will only work with integer moments, so an integer part is implicitly taken whenever needed; for example, $N\mm_\cc := \lceil N\mm_\cc\rceil$ in \eqref{e.momentL}.
As will be shown in Theorem~\ref{t.matching}\ref{t.matching.legendre}, the limit of \eqref{e.momentL} is continuous in $\vecmm\in[0,\infty)^n$.
Hence, once we obtain the limit for $\lceil N\vecmm\rceil\in(\Z_{> 0})^n$, the result automatically extends to $ N\vecmm\in [0,\infty)^n$.

To access \eqref{e.momentL}, let us express the moments in terms of the attractive BPs.
First, by \cite[Theorem~5.3]{hu2009stochastic}, the one-point moment of $\ZZ$ can be expressed as an expectation over independent BMs.
The same proof there works for multipoint (in space) moments, giving, for $y_1,\ldots,y_{N\mm}\in\R$,
\begin{align}
	\label{e.feynmankac}
	\EE\Big[ \prod_{\ii=1}^{N\mm} \ZZ(t,y_{\ii}) \Big]
	= 
	\Ebm\Big[ e^{\sum_{\ii<\jj} \int_0^{t} \d s \, \delta_0(\X_\ii(t-s)-\X_\jj(t-s))} \prod_{\ii=1}^{N\mm} Z(0,\X_\ii(t)) \Big],
\end{align}
where $X_1(s)-y_1$, \ldots, $X_{N\mm}(s)-y_{N\mm}$ are independent standard BMs under $\Ebm$, and the integrals are interpreted as localtimes.
Hereafter, $\mm\in(0,\infty)$ is fixed and is taken to be $\mm=\mm_1+\ldots+\mm_n$ whenever we analyze \eqref{e.momentL}.
The localtimes in \eqref{e.feynmankac} can be removed by transforming $\Ebm$ into a different law $\E$.
Let $\P$ and $\E$ denote the law and expectation under which $\X_1,\ldots,\X_{N\mm}$ evolve as a system of attractive BPs as
\begin{align}
	\label{e.aBP}
	\d \X_{\ii}(s) = \sum_{\jj=1}^{N\mm} \frac{1}{2} \sgn(\X_{\jj}(s)-\X_{\ii}(s)) \, \d s + \d \bm_\ii(s),
	\qquad
	\X_{\ii}(0) = y_{\ii},
\end{align}
where $\bm_1,\ldots,\bm_{N\mm}$ are independent standard BMs, and $\sgn(x):=(x/|x|)\ind_{\{x\neq 0\}}$.
We index the particles so that they are ordered at the start: $\X_1(0)\leq\X_2(0)\leq\ldots$, and the particles can exchange orders as time evolves.
By \cite{ichiba2013strong}, the equations \eqref{e.aBP} have a unique strong (pathwise) solution.
Start from $\Pbm$ (under which $\X_1,\ldots,\X_{N\mm}$ are independent BMs), apply Tanaka's formula to $\X_{\ii}-\X_{\jj}$, sum the result over $\ii<\jj\in\{1,\ldots,N\mm\}$, and exponentiate the result.
Doing so gives
\begin{subequations}
\label{e.tanaka}
\begin{align}
	\label{e.tanaka1}
	\text{under }\Pbm,\quad &
	e^{\sum_{\ii<\jj} \int_0^t \d s \, \delta_0(\X_\ii-\X_\jj)}
	=
	\exp\Big( -\sum_{\ii<\jj} \int_0^t \d (\X_{\ii}-\X_{\jj}) \, \frac12 \sgn(\X_\ii-\X_\jj) \Big)
\\
	\label{e.tanaka2}
	&
	\Cdot\exp\Big( -\sum_{\ii<\jj}\frac{1}{2}|\X_\ii(0)-\X_\jj(0)| +\sum_{\ii<\jj}\frac{1}{2}|\X_\ii(t)-\X_\jj(t)| \Big).
\end{align}
\end{subequations}
Rewrite the sum in \eqref{e.tanaka1} as $-\frac{1}{2}\sum_{\ii=1}^{N\mm} \int_0^t \d\X_{\ii}\, \sum_{\jj:\jj\neq\ii}\sgn(\X_{\ii}-\X_{\jj})$.
As a process in $t$, this term has quadratic variation $tN\mm ((N\mm)^2-1)/12$; see \cite[Eq.~(2.1)]{chen15}.
Given this property, Girsanov's theorem asserts that
\begin{align}
	\label{e.girsanov}
	\frac{\d\Pbm}{\d\P~~}(\vec{\X})
	=
	\big(\text{right hand side of \eqref{e.tanaka1}}\big) \cdot e^{N\mm((N\mm)^2-1)\frac{t}{24}}.
\end{align}
Let $\gdstate(\vec{x}):=\exp(-\frac12\sum_{\ii<\jj}|x_\ii-x_\jj|)=\exp(-\frac14\sum_{\ii,\jj=1}^{N\mm}|x_\ii-x_\jj|)$.
Combining \eqref{e.tanaka}--\eqref{e.girsanov} gives
\begin{align}
\label{e.transformation.aBP}
	\Ebm\big[e^{\sum_{\ii<\jj} \int_0^{t} \d s \, \delta_0(\X_\ii(t-s)-\X_\jj(t-s))} \big(\, \Cdot \, \big)\big]
	=
	e^{N\mm((N\mm)^2-1)\frac{t}{24}}
	\
	\E\big[ \gdstate(\vec{\X}(0))\big(\, \Cdot \, \big)/\gdstate(\vec{\X}(t)) \big].
\end{align}
We view both sides of \eqref{e.transformation.aBP} as operators that act on a measurable function $F(\vec{\X})$ of $(\X_{\ii}(\cdot))_{\ii=1,\ldots,N\mm}$ such that $\gdstate(\vec{\X}(0))F(\vec{\X})/\gdstate(\vec{\X}(t))$ is integrable under $\E$.

\begin{rmk}\label{r.conjugation}
A similar application of Tanaka's formula was used in \cite{chen15} to access the one-point integer moments of the SHE.
The transformation \eqref{e.transformation.aBP} can also be derived by conjugating the Hamiltonian of the delta Bose gas by its ground state.
This is done in \cite[Eq.~(5)--(6)]{ledoussal2022ranked} at a physics level of rigor.
\end{rmk}
 
To analyze the attractive BPs, we will mostly work with the (scaled) empirical measure 
\begin{align}
	\label{e.empirical.measure}
	\EM_N(s) := \frac{1}{N}\sum_{\ii=1}^{N\mm} \delta_{X_{\ii}(Ts)/(N T)}.
\end{align}
Hereafter, we write $T=T_N$, with the understanding that $N\to\infty$ is always taken under \eqref{e.scaling}.
Writing $\ip{\lambda,\f}:=\int_{\R}\lambda(\d x)\, \f(x)$ for the action of a Borel measure $\lambda$ on $\f$, we rewrite \eqref{e.transformation.aBP} as
\begin{subequations}
\label{e.transformation.aBP.}
\begin{align}
	\label{e.transformation.aBP.l}
	\Ebm&\big[e^{\sum_{\ii<\jj} \int_0^{T\termt} \d s \, \delta_0(\X_\ii(T\termt-s)-\X_\jj(T\termt-s))} (\,\Cdot\,) \big]
\\
	\label{e.transformation.aBP.r}
	&=
	\exp\big( N^3T\mm^3\,(1-\tfrac{1}{(N\mm)^2}) \tfrac{\termt}{24} \big)
	\
	\E\Big[ \exp\Big( N^3T\Ip{\EM_N(s)^{\otimes2},\tfrac{1}{4}}|x-x'|\big|_{s=0}^{s=\termt} \Big) \ \big(\ \Cdot\ \big)\Big],
\end{align}
\end{subequations}
where $\lambda^{\otimes2}:=\lambda\otimes\lambda$ stands for the product measure that acts on $\R^2=\{(x,x')\}$.

\begin{conv}
\label{conv.time}
We have and will continue to use $t$ to denote the time variable of the SHE, and use $s$ to denote the time variable of $\X_1,\X_2,\ldots$.
We call $\ZZ(t,\Cdot)|_{t=0}$ and $\ZZ(t,\Cdot)|_{t=N\termt}$ respectively the \textbf{initial} and \textbf{terminal conditions} of the SHE, and call $\X_{\ii}(s)|_{s=0}$ and $\X_{\ii}(s)|_{s=N\termt}$ respectively the \textbf{starting} and \textbf{ending conditions} of $\X_\ii$.
Since $t=N\termt-s$, there is a \emph{time reversal} between $t$ and $s$.
\end{conv}

\subsection{Result: LDP for the attractive BPs}
\label{s.results.ldp}
The task of analyzing the moment Lyapunov exponent~\eqref{e.momentL} turns into studying the large deviations of the attractive BPs.
Indeed, the quantities in \eqref{e.transformation.aBP.} are of exponential scales, so the expectation is controlled by the large deviations of the attractive BPs.

Our result, Theorem~\ref{t.ldp}, gives the sample-path LDP for $\EM_N$.

Let us prepare some notation for Theorem~\ref{t.ldp}.
Let $(\mm\Psp(\R))$ denote the space of positive Borel measures on $\R$ with total mass $\mm$, and endow the space with the weak* topology, namely the topology induced by convergence in distribution, which is denoted by $\Rightarrow$.
Consider $\Csp([0,\termts],\mm\Psp(\R))$.
A sequence $\mu_k$ converging to $\mu$ in this space means $\sup_{s\in[0,\termts]}|\ip{\mu_k(s)-\mu(s),\varphi}|\to 0$ for any $\varphi\in\Csp_\mathrm{b}(\R)$.
Endow $\Csp([0,\termts],\mm\Psp(\R))$ with the topology induced by the convergence sequences.
Namely, a set $A$ in $\Csp([0,\termts],\mm\Psp(\R))$ is closed if and only if $A$ contains the limit of any sequence $\{\mu_k\}_{k=1,2,\ldots}\subset A$ that converges.
In Section~\ref{s.basic.topo}, we will introduce metrics for these topologies.
Given a topological space $\mathscr{X}$, a sequence of $\mathscr{X}$-valued random variables $R_N$ \textbf{satisfying the LDP with the speed $s_N$ and rate function $I$} means that
$
	-\inf_{O} I 
	\leq 
	\liminf_{N\to\infty}\frac{1}{s_N}\log\P[R_N\in O]
$
and
$
	\limsup_{N\to\infty}\frac{1}{s_N}\log\P[R_N\in C]
	\leq
	-\inf_C I	
$
for every open $O\subset\mathscr{X}$ and closed $C\subset\mathscr{X}$.
A function $I:\mathscr{X}\to[0,\infty]$ being a \textbf{good rate function} means that its is lower-semicontinuous and $\{I\leq r\}$ is compact for any $r<\infty$.

More notation for Theorem~\ref{t.ldp} is in place.
For $\lambda\in\mm\Psp(\R)$, set
\begin{align}
	\label{e.Sgn}
	\Sgn[\lambda](x) 
	:= 
	\ip{\lambda, \tfrac12 \sgn(\Cdot-x)}
	= 
	- \tfrac12 \ip{\lambda,\ind_{(-\infty,x)}} + \tfrac12 \ip{\lambda,\ind_{(x,\infty)}}.
\end{align}
Recall that $\sgn(0):=0$, so the right hand side excludes the mass of $\lambda$ at $x$.
This function gives the amount of drift a particle feels in the system of attractive BPs.
More explicitly, the drift in \eqref{e.aBP} is equal to $N\Sgn[\lambda](x)$ with $\lambda=\EM_N(s)$ and $x=X_\ii(Ts)/(NT)$.
Next, let $ {\Cbsp}^{1,1}([0,\termts],\R) $ be the space of functions $\h=\h(s,x)$ with continuous and bounded first derivatives in time and space.
Let
\begin{align}
	\label{e.rateM}
	\rateM(\mu,\h)
	&:=
	\ip{ \mu(s), \h(s) }\big|^{s=\termt}_{s=0}
	-
	\int_{0}^{\termt} \d s \, \ip{ \mu, \partial_s \h} 
	-
	\int_{0}^{\termt} \d s \, \ip{ \mu, \Sgn[\mu] \partial_x \h},
\\
	\label{e.rate}
	\rate(\mu)
	&:=
	\sup\Big\{ \rateM(\mu,\h) - \int_{0}^{\termt} \d s \, \frac12 \Ip{ \mu, (\partial_x \h)^2 } \ : \ \h \in \Cbsp^{1,1}([0,\termts],\R) \Big\}.
\end{align}
Hereafter, we adopt shorthand notation such as $\h(s):=\h(s,\Cdot)$, $\ip{ \mu, \partial_s \h} := \ip{ \mu(s), \partial_s \h(s)} $, and $\Sgn[\mu]:=\Sgn[\mu(s)](x)$.
Expressions like \eqref{e.rate} arise in the martingale method \cite{kipnis89} for proving LDP upper bounds.
To prove the LDP lower bound and to analyze $\rate$, it will be convenient to have an expression more explicit than \eqref{e.rate}.
Given any $\lambda\in\mm\Psp(\R)$, let
\begin{align}
	\label{e.cdf.quant}
	\cdf[\lambda](x) := \ip{ \lambda, \ind_{(-\infty,x]} },
	\qquad
	\quant[\lambda](a) := \inf\{ x\in\R : a\leq \cdf[\lambda](x) \},
\end{align}
be the Cumulative Distribution Function (CDF) and quantile function, aka inverse CDF.
We refer to $a\in[0,\mm]$ as the \textbf{quantile coordinate}.
As will be shown in Section~\ref{s.rate.quantile}, the rate function $\rate$ permits a \textbf{quantile representation} $\rateq$.
Namely, $\rate=\rateq$, where
\begin{align}
	\label{e.rateq}
	\rateq(\mu)
	:=
	\int_0^{\termt} \d s \int_0^\mm \d a \,
	\frac12 \big( \partial_s (\quant[\mu]) - \Sgn[\mu](\quant[\mu]) \big)^2,
\end{align}
where $\quant[\mu]:=\quant[\mu(s)](a)$.
Given that $|\Sgn[\mu]|\leq \mm/2$, when $\partial_s(\quant[\mu])\notin\Lsp^2([0,\termts]\times[0,\mm])$, we define the integral in \eqref{e.rateq} to be $+\infty$.

\begin{thm}\label{t.ldp}
Fix any $\termt,\mm\in(0,\infty)$ and a starting condition $ \mu_\Start \in \mm\Psp(\R) $. Assume \eqref{e.scaling}.

Started from a deterministic $\EM_N(0)$ such that $ \EM_N(0) \Rightarrow \mu_\Start $, the empirical measure $ \EM_N $ satisfies the LDP on $ \Csp([0,\termts],\mm\Psp(\R)) $ with the speed $N^3 T=N^3T_N$ and the rate function
\begin{align}
	\label{e.rate.ic}
	\rate_\star(\mu)
	:=
	\begin{cases}
		\rate(\mu)=\rateq(\mu)	& \text{when } \mu(0) = \mu_\Start,
	\\
		+\infty& \text{when } \mu(0) \neq \mu_\Start,
	\end{cases}
\end{align}
and $ \rate_\star $ is a good rate function.
\end{thm}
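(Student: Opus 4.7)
The plan is to follow the classical Dawson--G{\"a}rtner/Freidlin--Wentzell strategy for mean-field interacting diffusions, adapted to the combined small-noise/many-particle regime enforced by \eqref{e.scaling}. First I would rescale to $\Y_\ii(\tau) := \X_\ii(T\tau)/(NT)$, so that $\EM_N(\tau) = \frac{1}{N}\sum_\ii \delta_{\Y_\ii(\tau)}$ and the particles satisfy
\begin{align*}
\d\Y_\ii(\tau) = \frac{1}{N}\sum_{\jj=1}^{N\mm}\tfrac{1}{2}\sgn(\Y_\jj(\tau)-\Y_\ii(\tau))\, \d\tau + \e_N\, \d\tilde \bm_\ii(\tau),
\qquad \e_N := \tfrac{1}{N\sqrt{T}}.
\end{align*}
Under \eqref{e.scaling}, $\e_N\to 0$ and $N\mm\to\infty$ simultaneously, and the natural LDP speed for each particle is $\e_N^{-2} = N^2T$, so for $N\mm$ particles one expects speed $N^3T$, matching the statement. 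I expect $\rate$ to be an action integral of Freidlin--Wentzell type, penalizing deviations of $\mu$ from the noiseless mean-field continuity equation $\partial_s\mu_s + \partial_x(\mu_s\,u[\mu_s])=0$ with $u[\mu](x):=\int\tfrac12\sgn(x'-x)\,\mu(\d x')$, while $\rateq$ is its Legendre dual obtained by testing against smooth $\varphi$.

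The upper bound (Section~\ref{s.upbd}) would proceed via the standard two steps. I would first establish exponential tightness of $\EM_N$ in $\Csp([0,\termts],\mm\Psp(\R))$ at speed $N^3T$: the drift on each $\Y_\ii$ is bounded by $\mm/2$ in absolute value, so exponential spatial tail and modulus-of-continuity estimates follow from Gaussian tail bounds on the rescaled martingale part $\e_N\tilde\bm_\ii$, uniformly in $\ii$. Next, for each $\varphi\in\Ccsp^{1,2}([0,\termt]\times\R)$ I would apply It{\^o} to $\ip{\EM_N(s),\varphi(s,\Cdot)}$ to produce an exponential supermartingale of the form $\exp(N^3T[\ip{\EM_N(s),\varphi}|_{0}^{s} - \text{(drift/quadratic)}])$; optimizing over $\varphi$ identifies $\rateq$ as the upper-bound rate, and combining with exponential tightness yields the LDP upper bound.

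The lower bound (Section~\ref{s.lwbd}) is the more delicate side and would be proved by Girsanov change of measure. Given a target $\mu$ with $\rate(\mu)<\infty$, one constructs a tilted drift $v(s,x)$ realizing $\mu$ via the tilted continuity equation, runs the particle system under the tilted law, and uses a propagation-of-chaos type argument to show $\EM_N$ concentrates on $\mu$ under the tilt. The Radon--Nikodym derivative contributes precisely the action $\tfrac12\int_0^\termt\ip{\mu_s,v(s,\Cdot)^2}\d s$. A mollification step will likely be needed to approximate $\mu$ by measures with smooth densities, then a diagonal argument recovers the general case. Equality $\rate=\rateq$, treated in Section~\ref{s.ratefn}, should follow from a convex duality computation once the action is cast in Benamou--Brenier form.

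The main obstacle, as the introduction emphasizes, is the clustering behavior: because the interaction is attractive and non-Lipschitz (sign drift), the law-of-large-numbers limit $\mu_s$ can and generically will develop atoms in finite time, so the velocity field $v$ is not defined pointwise and standard $L^2(\mu_s)$ analysis breaks down on the support of the singular part. This forces both a careful definition of $\rate$ that accommodates atomic measures and a Girsanov construction in the lower bound that tracks clusters rather than individual particles. Related to this, propagation-of-chaos cannot rely on Lipschitz drift estimates and will need to be redone using the pathwise well-posedness from~\cite{ichiba2013strong} together with monotone/coupling arguments exploiting the one-dimensional ordering of the $\X_\ii$. Handling these singular trajectories uniformly in the joint limit $(\e_N,N\mm)\to(0,\infty)$ is the technical core I would devote the bulk of the argument to.
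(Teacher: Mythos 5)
Your outline of the upper bound is essentially the paper's: exponential tightness of $\EM_N$ at speed $N^3T$ (using $|\sgn|\le 1/2$ plus Gaussian tails on the Brownian increments), followed by the Kipnis--Olla--Varadhan martingale method in which It\^{o}'s formula applied to $\ip{\EM_N(s),\h(s)}$ produces an exponential martingale whose exponent, after optimizing over $\h$, recovers \eqref{e.rate}. You also correctly flag that the clustering/atom issue is the crux. However, your proposed route for the lower bound has a genuine gap, and the paper's resolution goes in the \emph{opposite} direction from what you suggest.

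You propose constructing a tilted drift $v(s,x)$, running propagation of chaos under the tilt, and then ``approximate $\mu$ by measures with smooth densities, then a diagonal argument recovers the general case.'' In the scaling regime \eqref{e.scaling} this mollification step fails: $\rateq$ is lower semicontinuous but \emph{not} upper semicontinuous, because mollifying an atom of mass $\mm_\cc$ into a narrow blob does not send the action back to that of the atom. Concretely, in the quantile coordinate \eqref{e.rateq}, an atom of mass $\mm_\cc$ moving at constant velocity $v_\cc$ has $\Sgn[\mu]\circ\quant[\mu]$ constant on the corresponding quantile interval (by \eqref{e.Sgn.id}), whereas a spread-out blob of the same mass has $\Sgn\circ\quant = \tfrac12-a$ varying linearly; expanding the square shows the mollified cost exceeds the atomic one by roughly $\termt\,\mm_\cc^3/24$ per cluster, a quantity that does \emph{not} vanish as the mollification width tends to zero. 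Physically, the attraction is $O(N)$ strong in the rescaled coordinates, so the Girsanov cost of holding particles artificially spread out stays bounded away from zero. The paper instead approximates $\mu$ by \emph{piecewise-linear clustering} (fully atomic) deviations $\xi = \sum_\cc\mm_\cc\delta_{\xi_\cc}$ via the quantile representation (see \eqref{e.lwbd.plapprox}), and for such $\xi$ applies Girsanov with a piecewise-constant per-cluster drift. The mechanism for keeping the empirical measure near $\xi$ is a pathwise comparison of gap processes (from \cite{sarantsev2019camparison}) showing that the intra-cluster attraction can only contract each block, together with a decomposition of $[0,\termts]$ into linear and (short) transition segments. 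Your remark about ``tracking clusters rather than individual particles'' and exploiting the one-dimensional ordering does point toward this, but it is in tension with your mollification plan, and the specific devices --- PL-clustering approximation driven by the quantile representation $\rateq$, and the gap-process comparison controlling the stopping time before inter-cluster crossings --- are the load-bearing ideas that your sketch does not supply.
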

\noindent{}%
We will refer to $\rate_\star$, $\rate$, and $\rateq$ all as rate functions; as shown in Sections~\ref{s.ratefn}, they are lower semicontinuous, nonnegative, and not identically infinite.

%\subsection{Discussions on Theorem~\ref{t.ldp}}
%\label{s.results.discussldp}
As mentioned previously, the condition $N^2T\to\infty$ underscores the clustering behaviors.
In \eqref{e.aBP}, the drift pulls the particles together, while the diffusive effect spreads them out.
For $O(N)$ particles evolving over time $O(T)$, the contribution of the drift is $O(NT)$, while the contribution of the diffusive effect is $O(\sqrt{T})$.
The condition $N^2T\to\infty$ amounts to saying that the drift dominates the diffusive effect.
Under this condition, the attractive BPs tend to cluster, so we refer to the regimes with $N^2T\to\infty$ as the \textbf{clustering regimes}.
In these regimes, deviations $\mu\in\Csp([0,\termts],\mm\Psp(\R))$ with \emph{atoms} are relevant.

We emphasize that Theorem~\ref{t.ldp} differs from the LDP proven in \cite{dembo2016large}.
The work \cite{dembo2016large} considered a general class of rank-based diffusions and proved their LDPs under the scaling regime that corresponds to $N\to\infty$ and $TN^2=A<\infty$, which we refer to as the \textbf{diffusive regime}.
In the diffusive regime, the drift and diffusive effect compete at the same footing.
We hence do not expect deviations with atoms (that persist over time) to be relevant.
This distinction has a significant implication in the proof, which is explained in the next paragraph.

Let us describe the major challenge in proving Theorem~\ref{t.ldp}.
The challenge stems from the discontinuity of the drift, which is common in the study of rank-based diffusions.
With the presence of atomic deviations, however, the issue of discontinuity becomes \emph{much more severe}.
To see why, take any deviation $\mu\in\Csp([0,\termts],\mm\Psp)$. 
At the macroscopic level, when the attractive BPs follow $\mu$, the drift should approximate $\Sgn[\mu(s)](x)$.
For a non-atomic $\mu(s)$, the macroscopic drift $\Sgn[\mu(s)](x)$ is continuous in $x$, but for an atomic $\mu(s)$, the macroscopic drift is not continuous: for example $\Sgn[\delta_0](x)=(\ind_{(-\infty,0)}(x)-\ind_{(0,\infty)}(x))/2$.
Put differently, in the diffusive regime, the effect of the discontinuity in the drift diminishes in the $N\to\infty$ limit, while in the clustering regimes, the effect of the discontinuity \emph{persists}.
This issue renders many commonly-used tools non-applicable and manifests itself in the proof of the LDP lower bound, where we (have to) use a cluster approximation with the help of the quantile representation \eqref{e.rateq} of the rate function. 

\subsection{Result: applications to the moment Lyapunov exponents}
\label{s.results.momentL}
The results in Sections~\ref{s.results.momentL}--\ref{s.results.kpzburgers} are for a particular initial condition, which we now define.
For $\xxi\in\R$ and a small $\alpha>0$, let $\ZZ_\alpha=\ZZ_\alpha(t,x)$ be the solution of the SHE with the \textbf{delta-like initial condition} 
\begin{align}
	\label{e.deltalike.ic}
	\ZZ_\alpha(0,NT\Cdot\,) := \ind_{[-\alpha+\xxi,\xxi+\alpha]}.
\end{align}
We will send $N\to\infty$ first and $\alpha\to 0$ later.
The solution of the SHE with this initial condition, scaled in time by $T$ and in space by $NT$, can be written by the Feynman--Kac formula as 
\begin{align}
	\label{e.Z}
	\ZZ_{N,\alpha}(t,x)
	:=
	\ZZ_{\alpha}(Tt,NTx)
	:=
	\Ebm\big[ e^{\int_{0}^{Tt}\d s\, \noise(Tt-s,\X(s))} \ind_{[-\alpha+\xxi,\xxi+\alpha]}(\tfrac{1}{NT}\X(Tt)) \big],
\end{align} 
where $\X=\text{(standard BM)}+NTx$.
For the purpose of studying the LDPs and limit shapes of the KPZ equation, the delta-like initial condition approximates the true delta initial condition $\delta_{\xxi}$ sufficiently well; this is seen in the analysis of the companion paper \cite{lin23}.

We now explain how to obtain the multipoint moment Lyapunov exponents of $\ZZ_{N,\alpha}$.
To set up the notation, fix any $\xx_1<\ldots<\xx_n\in\R$, $\xxi\in\R$, let $\vecmm=(\mm_1,\ldots,\mm_n)\in[0,\infty)^n$ and $\mm=\mm_1+\ldots+\mm_n$ be as before, and let
\begin{subequations}
\label{e.minii}
\begin{align}
\label{e.minii1}
	&\momshe\big( \xxi \xrightarrow{\termt} (\vecxx,\vecmm) \big)
	:=
	\sup\Big\{ 
	\tfrac{\termt\mm^3}{24} + \Ip{ \mu(s)^{\otimes 2}, \tfrac{1}{4}|x-x'| }\big|^{s=\termt}_{s=0} - \rateq(\mu)
		\ :
\\
		&\qquad\mu\in\Csp([0,\termts],\mm\Psp(\R)),
		\
		\mu(0)=\sum_{\cc=1}^n \mm_\cc \delta_{\xx_\cc}, 
		\
		\mu(\termt)=\mm\delta_{\xxi} 
	\Big\}.
\end{align}
\end{subequations}
Let $ \cc_{N}(\ii):=\min\{ \cc : \mm_1+\ldots+\mm_\cc \geq i/N \} $ and set $y_{\ii}=\xx_{\cc_{N}(\ii)}$.
This way, $\EE[\prod_{\ii=1}^{N\mm}\ZZ(T\termt,NTy_{\ii})]=\EE[\prod_{\cc=1}^{n}\ZZ(T\termt,NT\xx_{\cc})^{N\mm_{\cc}}]$.
Specialize to the delta-like initial condition \eqref{e.deltalike.ic}, use \eqref{e.feynmankac} for $t=T\termt$ and $y_i\mapsto NT y_i$, use \eqref{e.transformation.aBP.}, and apply $\frac{1}{N^3T}\log(\Cdot)$ to the result.
Doing so gives
\begin{align}
\label{e.1}
\begin{split}
	&\frac{1}{N^3T}\log \EE\Big[\prod_{\cc=1}^{n}\ZZ_{N,\alpha}(\termt,\xx_{\cc})^{\mm_{\cc}}\Big]
	=
	\frac{\termt\mm^3}{24}
	+
	\frac{1}{4} \sum_{\cc,\cc'=1}^{n} \mm_{\cc} \, \mm_{\cc'}\,|\xx_{\cc}-\xx_{\cc'}|
	+
	o_N(1)
\\
	&+
	\frac{1}{N^3T} \log\P\Big[ 
		\X_{\ii}(0)=NT \xx_{\cc_N(\ii)},
		|\X_{\ii}(T\termt)-NT\xxi| \leq NT\alpha, \ \forall \ii
	\Big],
\end{split}
\end{align}
where $o_N(1)$ denotes a term that converges to zero as $N\to\infty$.
Rewrite \eqref{e.1} as
\begin{subequations}
\label{e.2}
\begin{align}
	&\frac{1}{N^3T}\log\EE\Big[\prod_{\cc=1}^{n}\ZZ_{N,\alpha}(\termt,\xx_{\cc})^{\mm_{\cc}}\Big]
	=
	\frac{\termt\mm^3}{24} + \Ip{ \EM_N(s)^{\otimes 2}, \tfrac{1}{4}|x-x'| }\big|^{s=\termt}_{s=0} + o(1)
\\
	\label{e.21}
	&+
	\frac{1}{N^3T} \log\P\Big[ \EM_{N}(0)=\sum_{\cc=1}^n\mm_{\cc}\delta_{\xx_\cc}\,,\ \supp(\EM_{N}(\termt))\subset[-\delta+\xxi,\xxi+\delta] \Big].
\end{align}
\end{subequations}
It now suffices to obtain the limit of \eqref{e.21} under $\lim_{\alpha\to0}\lim_{N\to\infty}$.
This will be carried out in Section~\ref{s.momentL.moml} with the aid of Theorem~\ref{t.ldp}.
The result gives
\begin{align}
\label{e.3}
	\limsup_{\alpha\to 0}\limsup_{N\to\infty}\Big|
		\eqref{e.21} + \inf\Big\{ \rateq(\mu) : \mu(0)=\sum_{\cc=1}^n \mm_\cc\delta_{\xx_\cc};\ \mu(\termt)=\delta_{\xxi} \Big\}
	\Big|
	=
	0.
\end{align}
Combining \eqref{e.2}--\eqref{e.3} gives the multipoint moment Lyapunov exponents of $\ZZ_{N,\alpha}$:
\begin{align}
	\label{e.momLyap}
	\limsup_{\alpha\to 0}\limsup_{N\to\infty}
	\Big| 
		\frac{1}{N^3T}\log &\EE\Big[\prod_{\cc=1}^{n}\ZZ_{N,\alpha}(\termt,\xx_{\cc})^{\mm_{\cc}}\Big]
		-
		\momshe\big( \xxi \xrightarrow{\termt} (\vecxx,\vecmm)) \big)
	\Big|
	=
	0.
\end{align}

For our subsequent analysis, the functional in the supremum in \eqref{e.minii1} is not quite convenient, and we need an equivalent expression of it.
Let
\begin{align}
	\label{e.mom}
	\mom_{[s_1,s_2]}(\mu)
	:=
	\int_{s_1}^{s_2} \d s\, \Big(\sum_{x} \frac{1}{24} \ip{ \mu, \ind_{\{x\}} }^3 - \int_0^{\mm} \d a \, \frac12 \big( \partial_s \quant[\mu] \big)^2\Big),
\end{align}
where the sum runs over atoms of $\mu$.
In Appendix~\ref{s.a.basic}, we prove that
\begin{align}
	\label{e.mom.rep}
	\tfrac{\termt\mm^3}{24} + \Ip{ \mu(s)^{\otimes 2}, \tfrac{1}{4}|x-x'| }\big|^{s=\termt}_{s=0} - \rateq(\mu)=\mom_{[0,\termts]}(\mu).
\end{align}
Given this, let us rewrite \eqref{e.minii} as
\begin{subequations}
\label{e.mini}
\begin{align}
	\momshe&\big( \xxi \xrightarrow{\termt} (\vecxx,\vecmm)) \big)
	:=
	\sup\Big\{ \mom_{[0,\termts]}(\mu) 
		\ :
\\
		&\mu\in\Csp([0,\termts],\mm\Psp(\R)),
		\
		\mu(0)=\sum_{\cc=1}^n \mm_\cc \delta_{\xx_\cc}, 
		\
		\mu(\termt)=\mm\delta_{\xxi} 
	\Big\},
\end{align}
\end{subequations}
where $\vecmm=(\mm_1,\ldots,\mm_n)$ and $\mm:=\mm_1+\ldots+\mm_n$.

Our result below, Theorem~\ref{t.optimal}, gives the unique minimizer of \eqref{e.mini}.
To state the result we need to construct a few objects.
We begin by introducing what we call \textbf{inertia clusters}.
The inertia clusters are point masses $\mm_1,\ldots,\mm_n$ that start at $s=0$ from $\xx_1<\ldots<\xx_n$ with velocities $\pull_\cc := \frac12(-\mm_{1}\ldots-\mm_{\cc-1}+\mm_{\cc+1}+\ldots+\mm_{n})$, $\cc=1,\ldots,n$.
They travel at constant velocities until they meet, and when they meet, they merge according to the conservation of momentum.
For example, if $\inerc{\cc}$ and $\inerc{\cc+1}$ meet, they merged into a single cluster with mass $(\mm_\cc+\mm_{\cc+1})$ and velocity $ (\mm_\cc \pull_\cc + \mm_{\cc+1} \pull_{\cc+1})/(\mm_\cc+\mm_{\cc+1})$.
Let $\inerc{\cc}=\inerc{\cc}(s)\in\Csp[0,\termts]$, $\cc=1,\ldots,n$ denote the trajectories of the inertial clusters.
Examine which inertia clusters have merged within $(0,\termt)$ and lump the indices of those clusters together.
Doing so gives a partition of $\{1,\ldots,n\}$ into intervals.
We let $\Branch$ denote this partition, and call an element $\bb$ in $\Branch$ a \textbf{branch}.
Namely, $\cc,\cc'\in\bb$ if and only if $\inerc{\cc}$ and $\inerc{\cc'}$ merged with $s\in(0,\termt)$.
Note that branches depend on $\termt$.
All the inertia clusters within a branch end up at the same position at $s=\termt$, namely $\inerc{\cc}(\termt)=\inerc{\cc'}(\termt)$ for all $\cc,\cc'\in\bb$; call this position $\inerc{\bb}(\termt)$.
In general, $\inerc{\bb}(\termt)\neq\xxi$.
To \emph{bring} the clusters to $\xxi$ at the ending time, apply a constant drift:
\begin{align}
	\label{e.optimal.frominertia}
	\optimalc{\cc}(s) := \inerc{\cc}(s) + \diner_\bb s,
	\qquad
	\diner_\bb := (\xxi-\inerc{\bb}(\termt))/\termt,
	\qquad
	\cc \in \bb, \ \bb\in\Branch.
\end{align}
The resulting deviation $\optimal=\sum_{\cc=1}^n \mm_\cc \delta_{\optimalc{\cc}}$ is the \textbf{optimal deviation} and we call $\optimalc{1},\ldots,\optimalc{n}$ the \textbf{optimal clusters}.
See Figure~\ref{f.optimal}.
By construction, $\optimalc{\cc}$ and $\optimalc{\cc'}$ merge within $(0,\termt)$ if and only if $\cc,\cc'\in\bb$.
\begin{thm}\label{t.optimal}
The optimal deviation $\optimal$ is the unique minimizer of~\eqref{e.mini}.
\end{thm}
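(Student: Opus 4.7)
The plan is to exploit the structural decomposition of $\mom_{[0,\termts]}$ into a positive ``cubic reward'' $\sum_x \tfrac{1}{24}\ip{\mu,\ind_{\{x\}}}^3$, which sees only the atomic part of $\mu(s)$, and a negative ``kinetic action'' $\int_0^\mm \tfrac{1}{2}(\partial_s\quant[\mu])^2\,da$. Since both boundary conditions in \eqref{e.mini} are purely atomic, I expect the unique maximizer to be purely atomic at every time, consisting of finitely many clusters whose trajectories are piecewise linear and which merge (but never split) as $s$ runs from $0$ to $\termt$, eventually coalescing into the single atom $\mm\delta_{\xxi}$ at time $\termt$.

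The first step is to reduce the supremum to purely atomic paths. Given an admissible $\mu$, I would collapse its absolutely continuous part into atoms: a mass block of total mass $m$ spread over a nondegenerate interval contributes $0$ to the cubic term, while replacing it by an atom of the same mass contributes $m^3/24$ times its lifespan. The quantile representation \eqref{e.rateq} is the right vehicle here, since $v\mapsto\tfrac12 v^2$ is convex, so averaging over a short quantile range (i.e.\ flattening $\quant[\mu]$ in $a$) can only decrease the kinetic action. Iterating this surgery and passing to a weak$^*$ limit—justified by the lower semicontinuity of $\rateq$ established in Section~\ref{s.ratefn}—should produce an atomic maximizing sequence.

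On the purely atomic class, $\mom_{[0,\termts]}$ reduces to
\begin{equation*}
\mom_{[0,\termts]}(\mu) \;=\; \int_0^{\termt} \sum_{j} \Big(\tfrac{1}{24}\,\MM_j(s)^3 \;-\; \tfrac12\,\MM_j(s)\,\dot{x}_j(s)^2\Big)\,ds,
\end{equation*}
where $\MM_j,x_j$ track the mass and position of the $j$-th cluster, with masses constant between mergers. For a fixed merger tree and fixed merger times, the Euler--Lagrange equations force each cluster to travel on a straight line between consecutive events. I would then construct $\optimal$ by a backward recursion starting from $\mm\delta_{\xxi}$ at time $\termt$: at each step identify the latest (earliest in backward time) instant at which a cluster splits, and fix the split location so as to equate the marginal kinetic cost with the marginal cubic loss. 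Uniqueness for a fixed tree follows because the functional is strictly concave in the free cluster positions; uniqueness of the tree itself should follow from the ordering $\xx_1<\ldots<\xx_n$, since any crossing of cluster trajectories can be removed by a swap-and-straighten argument that strictly increases $\mom$.

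The main obstacle will be the reduction to atomic measures. The weak$^*$ continuity of $s\mapsto\mu(s)$ makes it delicate to surgically concentrate a non-atomic component without violating the path continuity, especially near the boundary times; I expect to need a careful coupled concentration-and-mollification scheme combined with the lower semicontinuity properties of $\rateq$ from Section~\ref{s.ratefn}. A second, subtler point is showing that the backward recursion really terminates at $s=0$ with the prescribed starting data $\sum_\cc \mm_\cc \delta_{\xx_\cc}$; this amounts to a monotone-dependence analysis of the merger-position equations on the boundary data $(\vecxx,\vecmm,\xxi,\termt)$, which should be manageable but requires checking that the equations determining the merger tree have a unique admissible solution.
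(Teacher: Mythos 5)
Your proposal takes a genuinely different route from the paper. The paper does \emph{not} first reduce to atomic paths: instead, it starts from the explicit forward construction of $\optimal$ (free-flying inertia clusters with velocities $\pull_\cc$, merged by conservation of momentum, then corrected by a branch-constant drift $\diner_\bb$), and then verifies global optimality for \emph{arbitrary} admissible $\mu$ by applying Jensen's inequality directly to the quantile representation \eqref{e.rateq}, decomposed over the branches $\Branch$ of $\optimal$ (see \eqref{e.optimal.>1}). The crucial bookkeeping in that argument is the term $\Sgn_\quant$ expressed in quantile coordinates, which produces the nonnegative ``defect'' quantities $D_\bb$ measuring atoms sitting on the branch boundaries; the inequality $\rateq(\mu)\geq\rateq(\optimal)$ and the uniqueness both fall out of tracking when Jensen and $D_\bb\geq 0$ are sharp. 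Your approach buys a picture that derives $\optimal$ from first principles rather than verifying a guessed answer, but it pays for this with an extra reduction step the paper avoids altogether and with an inverse (backward) construction that is harder to control.

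Concretely, the gaps I see are the following. First, your atomic-reduction step is underspecified: the collapse must be performed over a \emph{fixed} partition of quantile intervals compatible with both boundary conditions, and while the $\mm_\cc$ blocks work at $s=0$ and $s=\termt$, nothing in your argument shows that collapsing over this particular partition does not lose optimality compared to other groupings; the cubic reward depends strongly on \emph{which} masses happen to coincide in space, so the choice of partition interacts nontrivially with the merger structure you are trying to determine. Second, and more seriously, your backward recursion ``equating marginal kinetic cost with marginal cubic loss'' is not a well-posed algorithm as stated: the merger tree, the merger times, and the merger positions are coupled, the functional is non-smooth at merger times (the masses $\MM_j$ jump), and you yourself flag that it is unclear the recursion lands on $\sum_\cc\mm_\cc\delta_{\xx_\cc}$ at $s=0$. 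In the paper this problem simply does not arise: the tree and merger positions are read off deterministically from the inertia-cluster collisions, and no fixed-point or recursion analysis is needed. Third, your ``strictly concave in the free cluster positions'' gives uniqueness only for a fixed tree; the uniqueness of the tree itself is the real content, and the ``swap-and-straighten'' argument is speculative — it is not clear it handles the case where two trees produce the same piecewise-linear trajectories but a different labeling of which atoms coincide, nor whether it interacts correctly with the drift correction $\diner_\bb$ that is present because the ending condition $\mm\delta_{\xxi}$ is generically off-center. The paper's uniqueness proof instead extracts from the equality case of Jensen that $D_\bb=0$ (no atoms at branch boundaries), that $\partial_s\quant-\Sgn_\quant$ is constant on each branch block, localizes \eqref{e.t.optimal.quant.de} onto each $\mu_\bb$, and then shows by a monotonicity argument in $a$ that each $\mu_\cc$ is a single point mass, after which an ODE uniqueness argument identifies the trajectories with $\optimalc{\cc}$.
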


\begin{figure}
\begin{minipage}{.45\linewidth}
\fbox{\includegraphics[width=\linewidth]{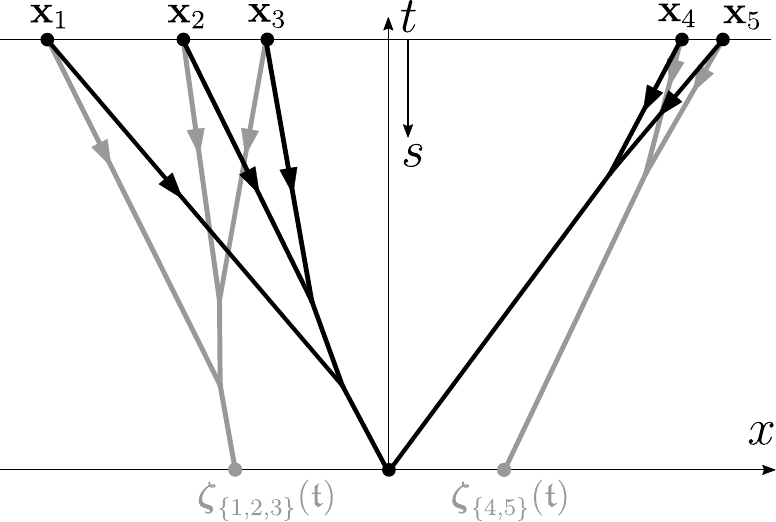}}
\caption{The inertia clusters (gray) and optimal clusters (black).
In this figure, $\Branch=\{\{1,2,3\},\{4,5\}\}$, and $\xxi=0$.
}
\label{f.optimal}
\end{minipage}
\hfill
\begin{minipage}{.45\linewidth}
\fbox{\includegraphics[width=\linewidth]{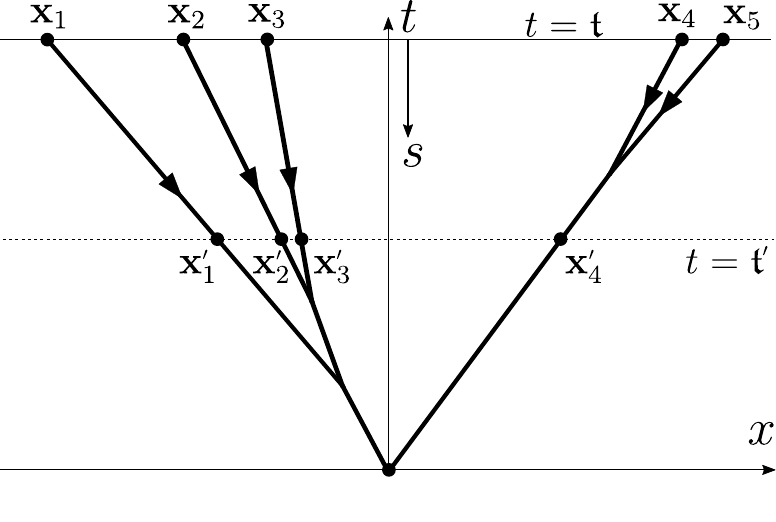}}
\caption{Intermediate-time configuration.
In this figure, $\Aranch(1)=\{1\}$, $\Aranch(2)=\{2\}$, $\Aranch(3)=\{3\}$, $\Aranch(4)=\{4,5\}$.%
}
\label{f.intermediate}
\end{minipage}
\end{figure}

For the analysis in the companion paper \cite{lin23}, the result \eqref{e.momLyap} along does not suffice, and we need a ``localized'' version of it as well.
To set up the notation,
For $f\in\Csp[0,Ts']$, set 
\begin{align}
	\label{e.Dist}
	\Dist_{N,s}(f,\optimal) &:= \min_{\cc=1,\ldots,n}\{ |\tfrac{1}{NT}f(Ts)-\optimal_\cc(s)| \},
\\
	\label{e.Dist[]}
	\Dist_{N,[0,s']}(f,\optimal) &:= \sup_{s\in[0,s']} \Dist_{N,s}(f,\optimal).
\end{align}
Define the localized version of $\ZZ_{N,\alpha}$:
\begin{align}
\begin{split}
\label{e.Zloc}
	\ZZ^{\betaloc}_{N,\alpha}(t,x)
	:=
	\Ebm\big[ 
		e^{\int_{0}^{Tt}\d s\, \noise(Tt-s,\X(s))} 
		&\ind_{[-\alpha+\xxi,\xxi+\alpha]}(\tfrac{1}{NT}\X(Tt))
\\
		&\cdot 
		\ind_{\{ \Dist_{N,[0,t]}(\X,\optimal(\cdot+(\termt-t))) \leq \beta\}} 
	\big],
\end{split}
\end{align} 
where $\X=\text{(standard BM)}+NTx$.
The last indicator in \eqref{e.Zloc} constrains the BM to stay close to $\optimal$, in the post-scale units.
By construction, $\ZZ^{{\betaloc}}_{N,\alpha}\leq \ZZ_{N,\alpha}$.
Below, Corollary~\ref{c.momentL}\ref{c.momentL1} just restates \eqref{e.momLyap}, while Corollary~\ref{c.momentL}\ref{c.momentL2} is the localized version of \eqref{e.momLyap} that will be needed in \cite{lin23}.
\begin{cor}\label{c.momentL}
\begin{enumerate}[leftmargin=20pt,label=(\alph*)]
\item[]
\item \label{c.momentL1}
$
	\displaystyle
	\limsup_{\alpha\to 0}\limsup_{N\to\infty}
	\Big|
		\frac{1}{N^3T} \log \EE\Big[ \prod_{\cc=1}^n \ZZ_{N,\alpha}(\termt,\xx_\cc)^{N\mm_\cc} \Big]
		-
		\momshe\big(\xxi\xrightarrow{\termt}(\vecxx,\vecmm)\big)
	\Big|
	=0.
$
\item \label{c.momentL2}
For any nonempty $ A\subset\{1,\ldots,n\}$ and $\beta>0$,
\begin{align}
\begin{split}
	\limsup_{\alpha\to 0}\limsup_{N\to\infty}
	\frac{1}{N^3T} 
	\log \EE\Big[ 
		\prod_{\cc\in A} \big(\ZZ_{N,\alpha}-\ZZ^{\betaloc}_{N,\alpha}\big)(\termt,\xx_\cc)^{N\mm_\cc} 
		&\cdot \prod_{\cc\notin A} \ZZ_{N,\alpha}(\termt,\xx_\cc)^{N\mm_\cc} 
	\Big]
\\
	&<
	\momshe\big(\xxi\xrightarrow{\termt}(\vecxx,\vecmm)\big).
\end{split}
\end{align}
\end{enumerate}
\end{cor}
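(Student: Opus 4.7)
The plan is to apply Varadhan's lemma using the sample-path LDP of Theorem~\ref{t.ldp} and to invoke the uniqueness of the minimizer from Theorem~\ref{t.optimal}. First, combine the Feynman--Kac formula~\eqref{e.feynmankac} with the transformation~\eqref{e.transformation.aBP.} to express
\[
\EE\Big[\prod_{\cc=1}^n \ZZ_{N,\alpha}(\termt,\xx_\cc)^{N\mm_\cc}\Big]
\]
as $e^{N^3T\mm^3(1-1/(N\mm)^2)\termt/24}$ times an $\E$-expectation under the attractive BP law, carrying the factor $\exp\bigl(N^3T\ip{\EM_N(s)^{\otimes 2},|x-x'|/2}\big|^{s=\termt}_{s=0}\bigr)$ and the product indicator $\prod_\ii \ind_{[\xxi-\alpha,\xxi+\alpha]}\bigl(\X_\ii(T\termt)/(NT)\bigr) = \ind_{\{\supp \EM_N(\termt)\subset[\xxi-\alpha,\xxi+\alpha]\}}$. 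The starting empirical measure is deterministic and equals $\EM_N(0)=\sum_\cc \mm_\cc \delta_{\xx_\cc}$; after the time reversal in Convention~\ref{conv.time}, this matches the ending constraint in~\eqref{e.mini}.

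For part~\ref{c.momentL1}, I would apply Varadhan's lemma to this $\E$-expectation via Theorem~\ref{t.ldp}. Together with the identity~\eqref{e.mom.rep}, the matching upper and lower bounds give
\[
\lim_{N\to\infty} \tfrac{1}{N^3T}\log\EE\Big[\textstyle\prod_\cc \ZZ_{N,\alpha}(\termt,\xx_\cc)^{N\mm_\cc}\Big] = \sup\big\{\mom_{[0,\termts]}(\mu) : \mu(0)=\textstyle\sum_\cc \mm_\cc \delta_{\xx_\cc},\ \supp \mu(\termt)\subset[\xxi-\alpha,\xxi+\alpha]\big\}.
\]
Sending $\alpha\to 0$ and exploiting compactness of the sublevel sets of $\rate$ (good rate function) together with upper semicontinuity of $\mom_{[0,\termts]}$ upgrades the supremum to the one in~\eqref{e.mini}, namely $\momshe(\vecmm)$. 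The technical subtlety is that the Varadhan integrand $\ip{\mu^{\otimes 2},|x-x'|/2}$ is unbounded; I would handle this by truncating the spatial support of $\EM_N$ and controlling the excess using exponential displacement estimates for the attractive BPs, whose per-particle drift is uniformly bounded by $\mm/2$.

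For part~\ref{c.momentL2}, the same transformation applies, but each factor $(\ZZ_{N,\alpha}-\ZZ^{\betaloc}_{N,\alpha})(\termt,\xx_\cc)^{N\mm_\cc}$ Feynman--Kac-expands as an $\Ebm$-integral over $N\mm_\cc$ independent BMs carrying the extra indicator $\prod_{\ii\in\cc}\ind_{\{\Dist_{N,[0,\termts]}(\X_\ii,\optimal)>\beta\}}$. After the transformation the $\E$-expectation is therefore restricted to the event $C_{A,\beta,N}$ that every particle in every group $\cc\in A$ has at least one instant $s\in[0,\termts]$ at which it lies $\beta$-far from all optimal cluster trajectories. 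The key step is to show that $C_{A,\beta,N}$ forces $\EM_N$ to stay a macroscopic distance $\delta(A,\beta)>0$ away from $\optimal$ in a path-space metric compatible with $\Csp([0,\termts],\mm\Psp(\R))$. Granting this, Theorem~\ref{t.ldp} gives the upper bound
\[
\sup\big\{\mom_{[0,\termts]}(\mu) : \mu(0)=\textstyle\sum_\cc \mm_\cc \delta_{\xx_\cc},\ \supp \mu(\termt)\subset[\xxi-\alpha,\xxi+\alpha],\ d(\mu,\optimal)\geq\delta\big\},
\]
which by compactness is attained at some $\mu_\star\neq\optimal$; Theorem~\ref{t.optimal} then delivers $\mom_{[0,\termts]}(\mu_\star)<\momshe(\vecmm)$ strictly, as required.

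The main obstacle is establishing the implication ``$C_{A,\beta,N}\Rightarrow d(\EM_N,\optimal)\geq\delta(A,\beta)$''. This is delicate because $C_{A,\beta,N}$ is a per-particle, labeled event whereas $\EM_N$ is label-free, and a brief instantaneous excursion by one particle does not visibly perturb $\EM_N$. I would combine the deterministic macroscopic drift bound $|\Sgn[\EM_N]|\leq\mm/2$ with a continuity-plus-pigeonhole argument: for each particle required by $C$ to escape the $\beta$-tube around the optimal trajectories, continuity of sample paths forces it to dwell in the $(\beta/2)$-complement of the tube for a time of order $\beta$ (except on an event of negligible $\E$-probability once the LDP speed is engaged), and pigeonholing the escape times of the $\sum_{\cc\in A}N\mm_\cc$ constrained particles yields a macroscopic sub-interval of $[0,\termts]$ on which $\EM_N$ charges the $(\beta/2)$-exterior of the tube, delivering the $\delta$-separation in a Wasserstein-type path metric.
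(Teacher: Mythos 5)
Your Part~\ref{c.momentL2} argument is essentially the paper's: condition on a Brownian-increment control event $\calU_N$, use a pigeonhole-over-subintervals plus the continuity estimate to show that a positive fraction of the escaping particles simultaneously sit at distance $\geq\beta/2$ from every $\optimalc{\cc}$, conclude $\dist_{[0,\termt]}(\EM_N,\optimal)\geq c_2>0$, then invoke the LDP upper bound and uniqueness of the minimizer from Theorem~\ref{t.optimal}. That matches.

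Part~\ref{c.momentL1} has a genuine gap. You propose to obtain the exact limit from ``matching upper and lower bounds'' in Varadhan's lemma, but the lower bound does not come for free here. The constraint event $\{\supp\EM_N(\termt)\subset[\xxi-\alpha,\xxi+\alpha]\}$ corresponds to a set with \emph{empty interior} in $\Csp([0,\termt],\mm\Psp(\R))$: for any $\mu_0$ in this set and any $\delta>0$, one can find $\mu$ with $\dist_{[0,\termts]}(\mu,\mu_0)<\delta$ but with a tiny piece of mass in $\mu(\termt)$ far from $[\xxi-\alpha,\xxi+\alpha]$. Consequently the LDP lower bound of Theorem~\ref{t.ldp} (or of Proposition~\ref{p.ldp.lw}) for open sets is vacuous on this event, and the Varadhan lower bound cannot be engaged by truncation or by any soft topological argument. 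The paper closes this gap by invoking Proposition~\ref{p.lwbd.clustering} with $\xi=\optimal$: that proposition is a \emph{per-particle} lower bound (stated in terms of $\DIST_{N,[0,\termts]}(\vec\X,\optimal)\leq cr$), which, because $\optimalc{\cc}(\termt)=\xxi$ for all $\cc$, directly forces $|\X_\ii(T\termt)/(NT)-\xxi|\leq cr\leq\alpha$ and hence the constraint event itself. This particle-level sharpening is essential; a measure-level LDP cannot substitute for it.

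Separately, your concern about the Varadhan integrand $\ip{\mu^{\otimes2},|x-x'|/2}$ being unbounded, and the proposed truncation-plus-displacement-estimate workaround, is unnecessary: the factor is evaluated only at $s=0$ (where $\EM_N(0)$ is deterministic) and at $s=\termt$; on the constraint event the pairwise distances at time $\termt$ are at most $2\alpha$, so $\ip{\EM_N(\termt)^{\otimes2},|x-x'|/2}\leq\alpha$. Hence the whole prefactor is deterministic up to an $O(\alpha)$ error, and the proof reduces (as in the paper's \eqref{e.c.momentL1}) to an LDP estimate on the probability of the terminal constraint, with the moment and rate-function pieces bookkept through \eqref{e.mini.} and \eqref{e.mom.rep}. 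The upper half of your argument (LDP upper bound applied via contraction to the closed set $\{\dist(\mu(\termt),\delta_\xxi)\leq\alpha\}$) is fine.
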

\noindent{}%
As mentioned in Section~\ref{s.intro}, the analog of Corollary~\ref{c.momentL}\ref{c.momentL1} in the hyperbolic scaling regime ($T\to\infty$, $N=1$, $\vec{\mm}\in\Z_{>0}^n$) has been proven in \cite{lin2023multi}.

The discussion that leads up to \eqref{e.momLyap} holds for more general initial conditions.
For example, one can consider the multi-delta-like initial condition
\begin{align}
	\label{e.multdeltalike.ic}
	\ZZ'_\alpha(0,NT\Cdot\,) := e^{N^2T\beta_1}\ind_{[-\alpha+\xxi_1,\xxi_1+\alpha]} + \ldots + e^{N^2T\beta_m}\ind_{[-\alpha+\xxi_m,\xxi_m+\alpha]},
\end{align}
for $\xxi_1<\ldots<\xxi_m\in\R$, $\beta_1,\ldots,\beta_m\in\R$, with $N\to\infty$ first and $\alpha\to 0$ later.
On the other hand, the way we characterize the unique minimizer in Theorem~\ref{t.optimal} works only for the delta-like initial condition~\eqref{e.deltalike.ic}.

\subsection{Result: Connection to $\ratekpz$ and the limit shape}
\label{s.results.kpzburgers}
We begin by preparing some notation.
Fix $t\in(0,\termts]$ and $\xx_1<\ldots<\xx_n$, and let $\parab(t)=\parab(t,x):=-x^2/(2t)$ and
\begin{align}
	\label{e.hvspace}
	\hvspace(t,\vecxx) := \{ \vechv=(\hv_\cc)_{\cc=1}^n : \hv_\cc \geq \parab(t,\xx_\cc), \ \cc=1,\ldots,n \}.
\end{align}
Given any $\vechv\in\hvspace(t,\vecxx)$, let $\hf{,t,\vecxx,\vechv}=\hf{}(x)$ be the piecewise $\Csp^1$ function on $\R$ characterized by the properties: $\hf{}(\xx_\cc)=\hv_\cc$, for all $\cc$; $\hf{}\geq \parab(t)$; $\hf{}(x)=\parab(t,x)$ for all $|x|$ large enough; $\partial_x\hf{}$ is constant on $\{\hf{}>\parab(t)\}\setminus\{\xx_1,\ldots,\xx_n\}$; $\hf{}$ is $\Csp^1$ except at $\xx_1,\ldots,\xx_n$.
See Figure~\ref{f.shape.termt} for an illustration.
Set
\begin{align}
	\label{e.ratekpz}
	\ratekpz(t,\vecxx,\vechv)
	&:=
	\int_{\R} \d x \, \big( \tfrac12( \partial_x \hf{,t,\vecxx,\vechv})^2 - \tfrac12(\partial_x\parab(t))^2 \big),
\\
	\label{e.mps}
	\mps(t,x)
	&:=
	\inf\Big\{ \frac{(x-y)^2}{2(\termt-t)} + \hf{}(y) : y\in\R \Big\},
	\
	(t,x)\in(0,1]\times\R.
\end{align}
See Figure~\ref{f.shape} for an illustration of $\mps$.
The space $\hvspace(t,\vecxx)$ is the space of deviations of the $n$-point upper-tail LDP for the KPZ equation at $(t,\xx_1)$,\ldots,$(t,\xx_n)$.
Because our method relies on positive moments, the results in this paper and in \cite{lin23} are restricted to the subspace
\begin{align}
	\label{e.hvspacec}
	\hvspacec(t,\vecxx) := \{ \vechv\in\hvspace(t,\vecxx) : \hf{,t,\vecxx,\vechv} \text{ is concave} \}.
\end{align}
Let us mention a related property.
Recall that the hypograph of a function is $\mathrm{hypo}(\f):=\{(\xx,\hv):\hv\leq \f(\xx), \xx\in\R\}$.
When $\vechv\in\hvspacec(t,\vecxx)$, the function $\hf{,t,\vecxx,\vechv}=\hf{}$ has its hypograph $\mathrm{hypo}(\hf{})$ given by the convex hull of $\mathrm{hypo}(\parab(1))\cup\{(\xx_{\cc},\hv_{\cc}) : \cc = 1,\ldots,n\}$, but this property fails when $\vechv\in\hvspace(t,\vecxx)\setminus \hvspacec(t,\vecxx)$.

The companion paper \cite{lin23} shows that the KPZ equation satisfies the finite-dimensional LDP with the rate function $\ratekpz$, and that $\mps$ gives the corresponding spacetime limit shape, under the same scaling regimes considered in this paper; see Theorem~1.1 in \cite{lin21} for the precise statement.
A similar LDP is proven in \cite{ganguly22} based the Brownian Gibbs resampling property \cite{corwin2014brownian,corwin2016kpz}.
The results in \cite{ganguly22} cover a different set of scaling regimes, work for configurations in $\hvspace(t,\vecxx)$, and give very detailed probability bounds.

Theorem~\ref{t.matching} gives the connection between $\ratekpz$, $\momshe$, $\mps$, which is used in \cite{lin23}.
\begin{notation}
\begin{enumerate}[leftmargin=20pt, label=(\alph*)]
\item[]
\item
Fix $t$ and $\vecxx$; view $\ratekpz(t,\vecxx,\vechv)=:\ratekpz(\vechv)$ and $\momshe(0\xrightarrow{t} (\vecxx,\vecmm))=:\momshe(\vecmm)$ as functions on $\hvspacec(t,\vecxx)$ and on $[0,\infty)^n$, respectively.%
\item \label{t.matching.shape.}
Fix any $(\xx_1<\ldots<\xx_n)$, consider the terminal time $\termt$, take any pair $(\vechv,\vecmm)\in\hvspacec(\termt,\vecxx)^\circ\times(0,\infty)^n$ that satisfies $(\nabla_{\vechv}\ratekpz)(t,\vecxx,\vechv) = \vecmm$, and let $\mps$ and $\optimal$ be the corresponding limit shape and optimal deviation, respectively.
As will be explained in Section~\ref{s.matching.shape}, the limit shape $\mps$ is a weak solution of the Hamilton--Jacobi equation of Burgers' equation and has shocks.

\item 
\label{t.matching.intermediate.}
Let $\vecxx$, $\vechv$, and $\vecmm$ be as in Part~\ref{t.matching.shape.} and take any intermediate time $\intermt\in(0,\termts]$.
Traveling in backward time, some of the optimal clusters may have merged by time $s=\termt-\intermt$.
Let $\{\optimal_\cc(\termt-\intermt)\}_{\cc=1}^n = \{ \xxi_{1}<\ldots<\xxi_{n'} \}$ denote the distinct positions of the clusters at that time.
Accordingly, let $\vechvi := (\mps(\intermt,\xxi_{\aa}))_{\aa=1}^{n'}$, $\Aranch(\aa):=\{\cc:\optimal_\cc(\termt-\intermt)=\xxi_\aa\}$, $\mmi_\aa:=\sum_{\cc\in\Aranch(\aa)} \mm_{\cc}$, and $\vecmmi:=(\mmi_\aa)_{\aa=1}^{n'}$; see Figure~\ref{f.intermediate} for an illustration.
\end{enumerate}
\end{notation}

\begin{thm}\label{t.matching}
Notation as above.
\begin{enumerate}[leftmargin=20pt, label=(\alph*)]
\item 
\label{t.matching.legendre}
The functions $\ratekpz$ and $\momshe$ are continuous, strictly convex, and the Legendre transform of each other.
Further, $\nabla\ratekpz=\nabla_{\vechv}\ratekpz:\hvspacec(t,\vecxx)\to[0,\infty)^n$ is a homeomorphism.
\item 
\label{t.matching.shape}
The trajectories of the shocks in $\mps$ and the trajectories of the optimal clusters in $\optimal$ coincide.
\item 
\label{t.matching.intermediate}
We have
\begin{align}
	\label{e.legendre.dual.intermediate}
	(\nabla_{\vechv}\ratekpz)(\intermt,\vecxxi,\vechvi) & = \vecmmi,
\\
	\label{e.treesg}
	\momshe\big( 0 \xrightarrow{\termt} (\vecxx,\vecmm) \big)
	&=
	\momshe\big( 0 \xrightarrow{\intermt} (\vecxxi,\vecmmi) \big)
	+
	\sum_{\aa=1}^{n'} \momshe\big( \xxi_\aa \xrightarrow{\termt-\intermt} (\xx_\cc,\mm_\cc)_{\cc\in\Aranch(\aa)} \big).
\end{align}
\end{enumerate}
\end{thm}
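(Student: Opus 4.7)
I plan to derive all three parts from the explicit form of the optimal deviation $\optimal$ furnished by Theorem~\ref{t.optimal}, paired with a direct construction of the limit shape $\mps$ as the entropy solution of the backward Hamilton--Jacobi equation $\partial_t\mps=\tfrac12(\partial_x\mps)^2$ with terminal data $\hf{,\termt,\vecxx,\vechv}$. The unifying link is the identification, for each dual pair $(\vecmm,\vechv)$ with $\nabla_\vechv\ratekpz=\vecmm$, between the slope jumps of $\hf{}$ at the break points and the cluster masses: $\mm_\cc=p_{\cc-1}-p_\cc$, where $p_0>\ldots>p_n$ are the slopes of the piecewise-linear portion of $\hf{}$, with the endpoint slopes $p_0,p_n$ pinned by the matching at the transition points $\xi^\pm$ where $\hf{}$ meets $\parab(\termt,\Cdot)$.

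\textbf{Part~\ref{t.matching.legendre}.} I would first evaluate $\momshe(\vecmm)=\mom_{[0,\termts]}(\optimal)$ in closed form. Since $\optimal$ is a superposition of piecewise-linearly moving point masses, $\partial_s\quant[\optimal]$ is piecewise constant and $\sum_x\ip{\optimal,\ind_{\{x\}}}^3$ is a piecewise-constant sum of cubed cluster masses, so \eqref{e.mom.rep} reduces to a finite sum of elementary integrals over the time intervals between mergers. A parallel closed-form calculation from \eqref{e.ratekpz} gives $\ratekpz(\vechv)$ in terms of $\{p_\cc,\xx_\cc,\xi^\pm,\termt\}$. An envelope-type computation (the constraint $\hf{}(\xx_\cc)=\hv_\cc$ contributes a Lagrange multiplier equal to the slope jump) confirms $(\nabla_\vechv\ratekpz)_\cc=p_{\cc-1}-p_\cc$, so the Legendre identity $\momshe(\vecmm)+\ratekpz(\vechv)=\sum_\cc\mm_\cc\hv_\cc$ reduces to an algebraic verification in the common variables. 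Strict convexity of $\ratekpz$ on each concavity stratum follows from the positive-definite quadratic structure of \eqref{e.ratekpz}, extended across stratum boundaries by continuity; strict convexity of $\momshe$ then follows by duality. The homeomorphism property of $\nabla_\vechv\ratekpz$ follows from strict convexity together with the surjectivity supplied by the duality.

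\textbf{Part~\ref{t.matching.shape}.} In the Burgers variable $u=\partial_x\mps$ with $\tau=\termt-t$, the backward HJ becomes $\partial_\tau u+u\partial_x u=0$ with initial data $u(0,\Cdot)=\partial_x\hf{}$, a piecewise-constant decreasing step function whose downward jumps at $\xx_\cc$ launch entropy shocks with Rankine--Hugoniot velocity $(p_{\cc-1}+p_\cc)/2$. Using $\mm_\cc=p_{\cc-1}-p_\cc$ and the parabolic matching fixing $p_0,p_n$, I would verify algebraically that $(p_{\cc-1}+p_\cc)/2=\pull_\cc+\diner_\bb$, so that the Rankine--Hugoniot shock velocity equals the drifted inertia velocity of the corresponding optimal cluster. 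A short computation (letting $q:=p_{\cc+}=p_{(\cc+1)-}$ be the common middle slope of two merging shocks) shows that the post-merger Burgers shock velocity $(p_{(\cc-1)-}+p_{(\cc+1)+})/2$ coincides with the momentum-weighted average $(v_\cc\mm_\cc+v_{\cc+1}\mm_{\cc+1})/(\mm_\cc+\mm_{\cc+1})$, so the merger tree of shocks reproduces $\Branch$. An induction on merger events then gives the full identification on $[0,\termts]$.

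\textbf{Part~\ref{t.matching.intermediate} and main obstacle.} At $s=\termt-\intermt$, the uniqueness assertion in Theorem~\ref{t.optimal} forces $\optimal|_{[0,\termt-\intermt]}$ and $\optimal|_{[\termt-\intermt,\termt]}$ to be optimal for the subproblems from $(\vecxx,\vecmm)$ to $(\vecxxi,\vecmmi)$ and, on each branch $\Aranch(\aa)$ separately, from $\xxi_\aa$ to $(\xx_\cc,\mm_\cc)_{\cc\in\Aranch(\aa)}$ (the clusters in distinct branches have separated by $s=\termt-\intermt$, so the second subproblem decouples along $\Aranch$); additivity of $\mom_{[s_1,s_2]}$ in the interval then yields \eqref{e.treesg}, and \eqref{e.legendre.dual.intermediate} follows by applying Part~\ref{t.matching.legendre} to the first subproblem, whose terminal values $\vechvi$ are read off $\mps$ as constructed in Part~\ref{t.matching.shape}. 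The principal obstacle is the algebraic verification underpinning Parts~\ref{t.matching.legendre} and~\ref{t.matching.shape}: the slopes $p_\cc$ depend nonlinearly on $(\vechv,\vecxx,\xxi,\termt)$ through the parabolic matching at $\xi^\pm$, and tracking them consistently through merger events in both the HJ shock tree and the inertia-cluster tree requires careful bookkeeping. I would organize the computation throughout in the $\{p_\cc\}$ coordinates (equivalently, cumulative-slope coordinates) in place of $\vechv$, so that both sides of the shock--cluster correspondence become manifest.
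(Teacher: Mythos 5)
Your Parts~\ref{t.matching.shape} and~\ref{t.matching.intermediate} track the paper's argument closely: both rest on the backward Hopf--Lax/Burgers picture, the Rankine--Hugoniot speed formula, and the identification $\mm_\cc = \partial_x\hf{}(\xx_\cc^-) - \partial_x\hf{}(\xx_\cc^+)$; your observation that post-merger shock speeds equal momentum-weighted averages of pre-merger speeds, and that the decomposition in Part~\ref{t.matching.intermediate} decouples along $\Aranch$ by the uniqueness in Theorem~\ref{t.optimal}, is exactly how the paper proceeds. Your gradient formula $(\nabla_\vechv\ratekpz)_\cc = p_{\cc-1}-p_\cc$ also matches \eqref{e.ratekpz.gradient}.

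For Part~\ref{t.matching.legendre}, however, you take a genuinely different route, and it contains two gaps. First, the Legendre-duality verification: you propose computing both $\momshe(\vecmm)=\mom_{[0,\termts]}(\optimal)$ and $\ratekpz(\vechv)$ in closed form and matching them algebraically. The paper avoids this (and the ``bookkeeping through merger events'' you rightly flag as the principal obstacle) by introducing a time-dependent duality gap $G(t) := \sum_\cc \mm_\cc\,\mps(t,\shock{\cc}(\termt-t)) - \text{(a $\ratebm$-type integral over the cones)}$, showing $G(\termt)=\vecmm\cdot\vechv-\ratekpz(\vechv)$ and $G(0^+)=0$, and then differentiating $G$ in $t$ using the Rankine--Hugoniot relation and an entropy-flux cubic identity to recover exactly the integrand of $\mom_{[0,\termts]}(\optimal)$. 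This dynamical argument sidesteps the closed-form bookkeeping entirely; your direct algebraic route is plausible in principle but you do not carry it through, and it would require substantially more care near merger times.

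Second, your argument for strict convexity and the homeomorphism is under-specified. Claiming strict convexity ``on each concavity stratum'' and then ``extended across stratum boundaries by continuity'' is not a valid deduction: a function convex on a domain and strictly convex on each stratum can still be affine along a segment crossing a boundary unless one shows the Hessian is uniformly nondegenerate, which you do not. The paper instead gets strict convexity cleanly from the variational characterization $\ratekpz(\vechv) = \inf\{\ratebm(\f): \f(\xx_\cc)=\hv_\cc\}$ via the $L^2$-convexity of $u\mapsto u^2/2$; equality in the convexity inequality forces $\partial_x\hf{,\vechv_1}=\partial_x\hf{,\vechv_2}$ a.e., hence $\vechv_1=\vechv_2$. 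Likewise, surjectivity of $\nabla\ratekpz$ onto $[0,\infty)^n$ does not follow ``from the duality'' without an independent argument --- the Legendre dual of a strictly convex function is a priori defined only where the supremum is finite, and one must rule out the gradient failing to attain large or boundary values of $\vecmm$. The paper proves surjectivity via a compactness argument (the sets $S(\vecmm)$ and $S(K)$ and an extremal-point argument); you would need something of that nature.
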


\begin{figure}
\begin{minipage}{.5\linewidth}
\fbox{\includegraphics[width=\linewidth]{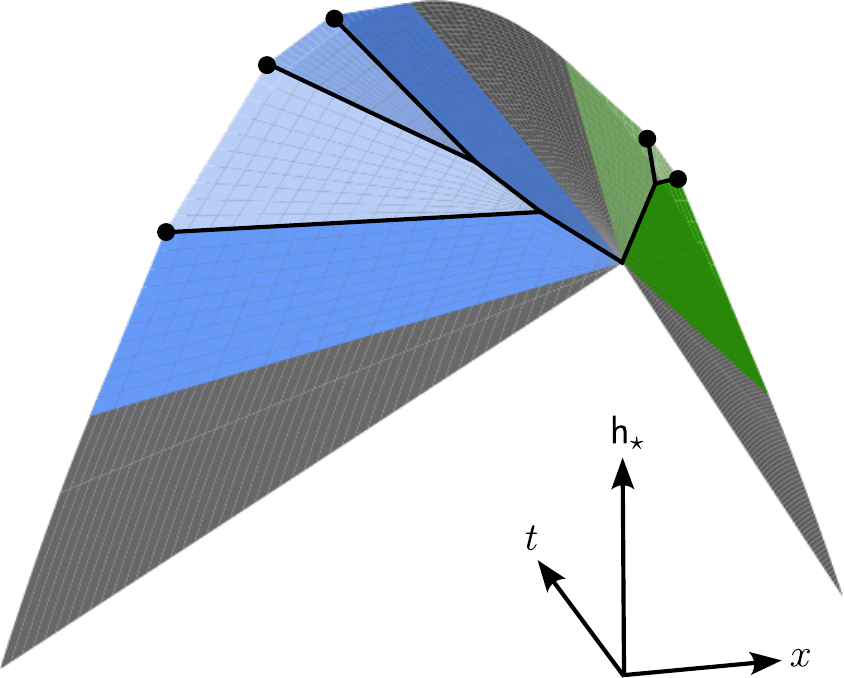}}
\caption{The limit shape $\mps$.
In the gray regions $\mps=\parab$; in the colored regions $\mps$ is piecewise linear.
}
\label{f.shape}
\end{minipage}
\hfill
\begin{minipage}{.48\linewidth}
%	\begin{minipage}{\linewidth}
	\fbox{\includegraphics[width=\linewidth]{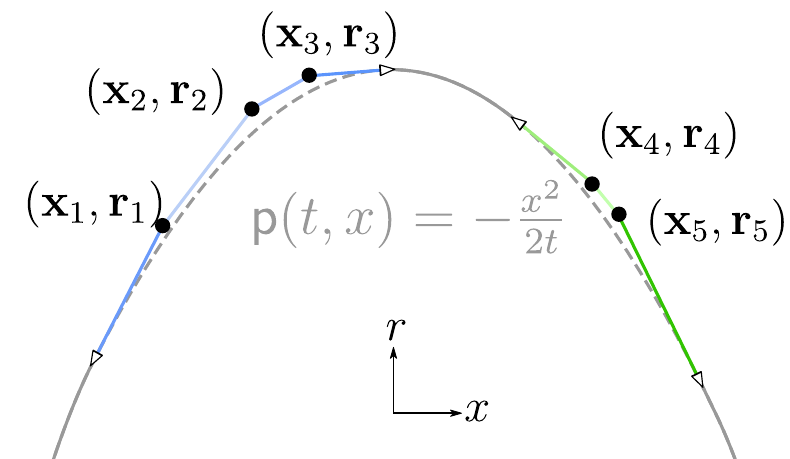}}
	\caption{The function $\hf{}=\hf{,t,\vecxx,\vechv}$.}
	\label{f.shape.termt}
%	\end{minipage}
	%
%	\begin{minipage}{\linewidth}
%	\fbox{\includegraphics[width=\linewidth]{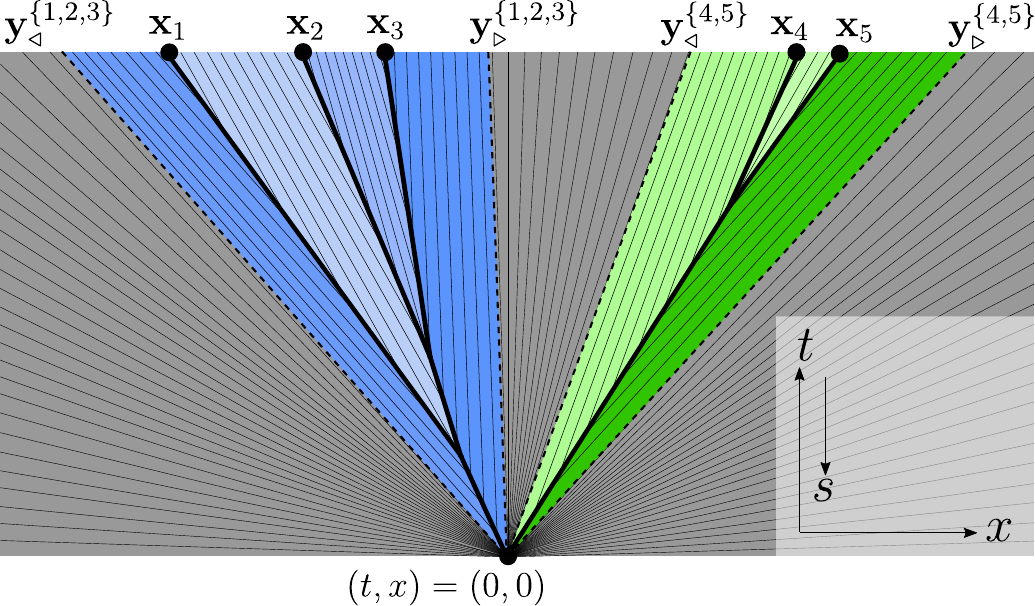}}
%	\caption{The shocks (thick solid lines) and characteristics (thin solid lines)}
%	\label{f.shocks}
%	\end{minipage}
\end{minipage}
\end{figure}

\section{Notation, definitions, tools}
\label{s.basic}

\subsection{Reduction to $ \mm=1 $}
\label{s.basic.m=1}
We begin by explaining how Theorem~\ref{t.ldp}, Theorem~\ref{t.optimal}, and Corollary~\ref{c.momentL} follow from the special case of $ \mm=1 $.
The key lies in certain scaling relations.
Consider the scaling operator $ \ip{ \scale_\mm\lambda, \f } := \ip{ \lambda, \f(\Cdot/\mm) } $.
The map $ \frac{1}{\mm}\scale_\mm: \mm\Psp(\R) \to \Psp(\R)  $ is a homeomorphism.
Applying $ \frac{1}{\mm}\scale_\mm $ to $\EM_N(s)$ gives $ \frac{1}{\mm}\scale_\mm \EM_N(s) = \frac{1}{N\mm} \sum_{\ii=1}^{N\mm} \delta_{\X_{\ii}(Ts)/(N\mm T)} $.
This can be viewed as a unit-mass empirical measure with $ N':=N\mm $ being the scaling parameter.
We will verify in Appendix~\ref{s.a.basic} the scaling identities
\begin{align}
	\label{e.rate.scaling}
	&
	\rate\big(\tfrac{1}{\mm}\scale_\mm\mu) = \tfrac{1}{\mm^3}\rate(\mu),
	&&
	\rateq\big(\tfrac{1}{\mm}\scale_\mm\mu) = \tfrac{1}{\mm^3}\rateq(\mu),
	&
	\text{for all } \mu\in\Csp([0,\termts],\mm\Psp(\R)).
\end{align}
The $ 1/\mm^3 $ factor absorbs the change in the LDP speed by going from $ N $ to $ N'=N\mm $.
Combining what said above shows that Theorem~\ref{t.ldp}, Theorem~\ref{t.optimal}, and Corollary~\ref{c.momentL} follow from the special case of $ \mm=1 $.

\subsection{The spaces $\Psp(\R)$ and $\Csp([0,\termts],\Psp(\R))$}
\label{s.basic.topo}
By Section~\ref{s.basic.m=1}, we consider $\mm=1$ only.

Let us introduce some metrics on $ \Psp(\R) $ and $ \Csp([0,\termts],\Psp(\R)) $.
Even though we endow $\Psp(\R)$ with the weak* topology, it will be convenient to also consider the $1$-Wasserstein metric:
\begin{align}
	\label{e.wass}
	\wass(\lambda,\lambda')
	:=
	\inf\big\{ \ip{ \pi, |x-x'| } : \pi \in \Psp(\R^2), \, \ip{\pi, \Cdot\otimes 1} = \lambda,  \, \ip{\pi, 1\otimes\Cdot} = \lambda' \big\}.
\end{align}
The 1-Wasserstein metric on $\Psp(\R)$ permits the inverse-CDF formula:
\begin{align}
	\wass(\lambda,\lambda')
	\label{e.wass.inverseCDF}
	&=
	\int_{\R} \d x \, \big| \cdf[\lambda](x) - \cdf[\lambda'](x) \big|
	=
	\int_0^1 \d a \, \big| \quant[\lambda](a) - \quant[\lambda'](a) \big|.
\end{align}
With $\R$ being non-compact, the 1-Wasserstein metric produces a topology \emph{stronger} than the weak* topology on $\Psp(\R)$.
To metrize the weak* topology, we introduce
\begin{align}
	\label{e.dist}
	\dist(\lambda,\lambda')
	:=
	\sum_{k=1}^\infty 2^{-k} \min\Big\{ 1, \, \int_{-k}^{k} \d x \,\big| \cdf[\lambda](x) - \cdf[\lambda'](x) \big| \Big\}. 
\end{align}
It is not hard to check that $\dist$ metrizes the weak* topology on $\Psp(\R)$.
Accordingly, 
\begin{align}
	\label{e.dist.time}
	\dist_{[0,\termts]}(\mu,\mu')
	:=
	\sup\big\{  \dist(\mu(s),\mu'(s)) : s\in[0,\termts] \big\}
\end{align}
metrizes the topology on $ \Csp([0,\termts],\Psp(\R)) $ introduced in Section~\ref{s.results.ldp}.
Let us note two useful inequalities related to these metrics.
First, by \eqref{e.wass.inverseCDF}--\eqref{e.dist},
$
	\dist(\lambda,\lambda') \leq \wass(\lambda,\lambda').
$
Next, for any $v_1,v_2,\ldots\in(0,1]$ that add to $1$ and any $y_1,y_2,\ldots,y'_1,y'_2,\ldots\in\R$,
\begin{align}
	\label{e.wass.coupling}
	\dist\Big(\sum_i v_i\delta_{y_i} , \sum_i v_i\delta_{y'_i} \Big)
	\leq
	\wass\Big(\sum_i v_i\delta_{y_i} , \sum_i v_i\delta_{y'_i} \Big)
	\leq
	\sum_{i} v_i\big| y_\ii - y'_{\ii} \big|,
\end{align}
which holds thanks to the coupling $ \pi(\{y_i,y'_i\}) = v_i $, $ i=1,2,\ldots $.

We will need a criterion for a set $ S \subset \Csp([0,\termts],\Psp(\R)) $ to be precompact.
Recall the topology of $\Csp([0,\termts],\Psp(\R))$ from before Theorem~\ref{t.ldp} and recall that we endow $\Psp(\R)$ with the weak* topology.
First, by a generalized version of the Arzel\'{a}--Ascoli theorem, the set $ S $ is precompact if it is equi-continuous and if $ \{ \mu(s) : \mu\in S, s\in[0,\termts] \} $ is precompact in $ \Psp(\R) $; see \cite[Thm.\ 47.1]{munkres2000topology} for example.
By the Banach–Alaoglu theorem, for any $ b<\infty $, the set $ \{ \lambda \in \Psp(\R) : \supp(\lambda) \subset [-b,b] \} $ is compact.
From this property, it is not hard to show that, for any $ b_1,b_2,\ldots\to\infty $, the set $ \cap_{k=1}^\infty \{ \lambda \in \Psp(\R) : \lambda(\R\setminus[-b_k,b_k]) \leq 1/k \} $ is precompact in $ \Psp(\R) $.
These properties give the following criterion.
\begin{lem}
\label{l.precompact}
A set $ S \subset \Csp([0,\termts],\Psp(\R)) $ is precompact if
\begin{enumerate}[leftmargin=20pt, label=(\roman*)]
\item \label{l.precompact.conti}
the set $ S $ is equicontinuous with respect to $ \dist $, and
\item \label{l.precompact.tailbd}
there exists $ b_k\to\infty $ such that $ \ip{ \mu(s), \ind_{\R\setminus[-b_k,b_k]}} \leq 1/k $, for all $ k\in\Z_{>0} $, $ \mu\in S $, and $ s\in[0,\termts] $.
\end{enumerate}
\end{lem}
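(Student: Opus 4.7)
The plan is to deduce the lemma directly from the generalized Arzelà--Ascoli theorem, as hinted in the paragraph preceding the statement. Since $\dist$ metrizes the weak$^*$ topology on $\Psp(\R)$ and $\dist_{[0,\termts]}$ in turn metrizes the topology on $\Csp([0,\termts],\Psp(\R))$, the space in question is metrizable, so it suffices to show that every sequence $\mu_1,\mu_2,\ldots \in S$ admits a subsequence converging in $\Csp([0,\termts],\Psp(\R))$.

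First I would verify the pointwise hypothesis of Arzelà--Ascoli, namely that the set $K := \{\mu(s) : \mu \in S,\ s \in [0,\termts]\}$ is precompact in $(\Psp(\R),\dist)$. Hypothesis \ref{l.precompact.tailbd} says that every element of $K$ lies in the intersection $\bigcap_{k\geq 1}\{\lambda \in \Psp(\R) : \lambda(\R\setminus[-b_k,b_k]) \leq 1/k\}$, and the paragraph preceding the lemma already notes that this intersection is precompact in $\Psp(\R)$ (a consequence of Banach--Alaoglu applied to each compact ball $\{\lambda : \supp\lambda \subset [-b_k,b_k]\}$ together with the tightness bound). Thus $K$ is precompact.

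Combined with the equicontinuity hypothesis \ref{l.precompact.conti}, the generalized Arzelà--Ascoli theorem (for instance \cite[Thm.\ 47.1]{munkres2000topology}) then yields precompactness of $S$ in $\Csp([0,\termts],\Psp(\R))$. Concretely, given a sequence $\mu_k \in S$: by separability of $[0,\termts]$ and precompactness of $K$, a diagonal argument extracts a subsequence converging at every rational time; equicontinuity then upgrades this to uniform convergence on $[0,\termts]$ in the metric $\dist$, and the limit is continuous.

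The only subtle point is making sure that the version of Arzelà--Ascoli we invoke handles a target space that is only metrizable (rather than a priori complete separable metric), but this is covered by the general topological statement in Munkres, which only needs the target to be Hausdorff and the pointwise-image set to be precompact. Thus the proof is essentially a bookkeeping assembly of the three ingredients stated in the paragraph before the lemma.
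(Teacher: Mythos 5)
Your proof follows exactly the same route as the paper: invoke the generalized Arzel\`a--Ascoli theorem from Munkres, supply equicontinuity via hypothesis~\ref{l.precompact.conti}, and supply precompactness of the pointwise image $\{\mu(s):\mu\in S,\ s\in[0,\termts]\}$ via hypothesis~\ref{l.precompact.tailbd} combined with Banach--Alaoglu. The argument is correct and matches the one the paper sketches in the paragraph preceding the lemma.
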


Here is a list of useful properties.
\begin{lem}\label{l.useful}
\begin{enumerate}[leftmargin=20pt, label=(\alph*)]
\item[]
\item \label{l.useful.inverseCDF}
For any $ \lambda \in \Psp(\R) $ and $ \f \in \Lsp^1(\R,\lambda) $, 
$
	\ip{ \lambda, \f }
	=
	\int_0^1 \d a \, \f(\quant[\lambda](a)).
$
\item \label{l.useful.tailbd}
For any $\mu\in\Csp([0,\termts],\Psp(\R))$, the tail mass $\sup_{s\in[0,\termts]} \ip{ \mu(s),\ind_{\R\setminus[-b,b]} } $ tends to zero as $b\to\infty$.
\item \label{l.useful.quant.conti}
For any $\mu\in\Csp([0,\termts],\Psp(\R))$, for Lebesgue almost every $a\in[0,1]$, $\quant[\mu(\Cdot)](a)\in\Csp[0,\termts]$.
\end{enumerate}
\end{lem}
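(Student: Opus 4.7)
\textbf{Part \ref{l.useful.inverseCDF}.} The plan is to first establish the push-forward identity that $\quant[\lambda]:[0,1]\to\R$ sends Lebesgue measure on $[0,1]$ forward to $\lambda$. This is immediate from the definition of $\quant[\lambda]$ as the left-continuous inverse of $\cdf[\lambda]$: the set $\{a\in[0,1]:\quant[\lambda](a)\leq x\}$ equals $[0,\cdf[\lambda](x)]$ up to at most one endpoint, and hence has Lebesgue measure $\cdf[\lambda](x)=\lambda((-\infty,x])$. The identity $\ip{\lambda,\f}=\int_0^1\f(\quant[\lambda](a))\,\d a$ is then the push-forward change of variables: verify for indicators $\ind_{(-\infty,x]}$, extend by linearity to simple functions, lift to nonnegative measurable $\f$ by MCT, and finally to $\f\in\Lsp^1(\R,\lambda)$ via $\f=\f_+-\f_-$.

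\textbf{Part \ref{l.useful.tailbd}.} Here I would use a compactness-to-tightness passage. Since $\mu$ is continuous on the compact interval $[0,\termts]$, its image $K:=\mu([0,\termts])$ is compact in $\Psp(\R)$ under the weak* topology. Because $\R$ is Polish, Prokhorov's theorem identifies relative weak*-compactness with tightness, so $K$ is tight. By definition, tightness says that for every $\e>0$ there is a compact $K_\e\subset\R$, which one may enclose in some interval $[-b_\e,b_\e]$, with $\lambda(\R\setminus K_\e)\leq\e$ uniformly for $\lambda\in K$. Setting $\e=1/k$ and choosing $b_k$ accordingly yields the conclusion.

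\textbf{Part \ref{l.useful.quant.conti}.} The starting point is the classical fact: if $\lambda_n\to\lambda$ weakly in $\Psp(\R)$, then $\quant[\lambda_n](a)\to\quant[\lambda](a)$ at every $a\in(0,1)$ that is a continuity point of the monotone function $\quant[\lambda](\cdot)$. I would prove this by noting that for any $x$ at which $\cdf[\lambda]$ is continuous with $\cdf[\lambda](x)<a$ (respectively, $>a$), eventually $\cdf[\lambda_n](x)<a$ (respectively, $>a$), giving $\liminf_n\quant[\lambda_n](a)\geq x$ (respectively, $\limsup_n\quant[\lambda_n](a)\leq x$); then let $x\uparrow\quant[\lambda](a)$ and $x\downarrow\quant[\lambda](a)$. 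Combined with continuity of $\mu$ in the metric $\dist$, this shows: at each fixed $s_0\in[0,\termts]$, the trajectory $s\mapsto\quant[\mu(s)](a)$ is continuous at $s_0$ for every $a$ outside the at-most-countable discontinuity set $D_{s_0}$ of $a\mapsto\quant[\mu(s_0)](a)$.

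The main obstacle is to promote this pointwise-in-$s_0$ statement to the global statement that the set $E:=\{a\in[0,1]:\,s\mapsto\quant[\mu(s)](a)\text{ is discontinuous somewhere}\}$ has Lebesgue measure zero. I would address this by first verifying joint Borel measurability of $(s,a)\mapsto\quant[\mu(s)](a)$ from the monotonicity in $a$ and the weak*-continuity in $s$ of $\cdf[\mu(s)](x)$ at continuity points $x$ of $\cdf[\mu(s_0)]$. Then, applying Fubini to the set $\{(s,a)\in[0,\termts]\times[0,1]:a\in D_s\}$, each $s$-slice is countable, so this set has planar Lebesgue measure zero; a careful reduction through a countable dense net $\{s_k\}\subset[0,\termts]$ combined with monotonicity of $a\mapsto\quant[\mu(s)](a)$ allows one to cover $E$ by a countable union of Lebesgue-null slices, completing the proof.
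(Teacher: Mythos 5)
Parts~\ref{l.useful.inverseCDF} and \ref{l.useful.tailbd} are correct. For \ref{l.useful.inverseCDF} the paper simply cites the standard inverse-CDF formula without proof; your push-forward argument is the usual one. For \ref{l.useful.tailbd} you argue via compactness of the image $\mu([0,\termts])$ plus Prokhorov, whereas the paper argues by contradiction: extract $s_k\to s_0$ with $\ip{\mu(s_k),\ind_{\R\setminus(-k,k)}}\geq\e_0$, use closedness of $\R\setminus(-k,k)$ and weak* continuity to pass to the limit, and contradict $\mu(s_0)\in\Psp(\R)$. Both work; yours is a touch slicker if one is comfortable citing Prokhorov.

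Part~\ref{l.useful.quant.conti} has a genuine gap, and you were right to flag it as the main obstacle. Your first step is sound: weak* convergence $\lambda_n\Rightarrow\lambda$ gives $\quant[\lambda_n](a)\to\quant[\lambda](a)$ at every continuity point $a$ of $\quant[\lambda]$, so for each fixed $s_0$ the map $s\mapsto\quant[\mu(s)](a)$ is continuous at $s_0$ for all $a$ outside the countable set $D_{s_0}$. But the promotion to ``for a.e.\ $a$, continuity at \emph{every} $s_0$'' does not follow from what you sketched. Fubini applied to $\{(s,a):a\in D_s\}$ only yields that for a.e.\ $a$ the slice $\{s:a\in D_s\}$ has Lebesgue measure zero, not that it is empty, and a measure-zero discontinuity set is not enough. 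The dense-net idea also fails: $a\notin\bigcup_k D_{s_k}$ for a countable dense $\{s_k\}$ does not put $a$ outside $D_{s_0}$ for the remaining $s_0$. In fact the difficulty is not merely in your write-up: the one-dimensional statement as written appears to be false. Take $\termt=1$ and let $\mu(s)$ have Lebesgue density $\ind_{[0,s]}+\ind_{[s+1,2]}$. Then $\wass(\mu(s),\mu(s'))=|s-s'|$, so $\mu\in\Csp([0,\termts],\Psp(\R))$, yet $\quant[\mu(s)](a)=a+\ind\{s<a\}$ for $a\in(0,1]$, which, as a function of $s$, has a unit jump at $s=a$ for \emph{every} $a\in(0,1)$. (The paper's own proof of \ref{l.useful.quant.conti} has the same soft spot: the asserted implication from the $\Lsp^1$-type modulus $\dist(\mu(s_1),\mu(s_2))\gtrsim\min\{1,\int\d a\,|\quant(s_1,a)-\quant(s_2,a)|\ind\{\cdots\}\}$ to the nullity of the bad set of $a$'s is not justified, and this example shows it cannot hold.) What \emph{is} true, and is exactly what your Helly-plus-Fubini argument establishes, is the two-dimensional version: $\quant[\mu(s)](a)$ is continuous in $s$ at Lebesgue-a.e.\ $(s,a)\in[0,\termts]\times[0,1]$. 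That weaker statement suffices for the paper's subsequent applications (a.e.\ convergence $\quant_\delta\to\quant$, and the blow-up of the discrete $H^1$ seminorm in Case~2 of Section~\ref{s.rate.quantile}), so the right move is to restate the lemma in this two-dimensional form rather than to try to patch the one-dimensional claim.
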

\noindent%
Part~\ref{l.useful.inverseCDF} is the standard inverse-CDF formula; Parts~\ref{l.useful.tailbd}--\ref{l.useful.quant.conti} are proven in Appendix~\ref{s.a.basic}.

\subsection{Expressing $\Sgn[\lambda]$ in the quantile coordinate}
\label{s.basic.Sgn}
For any $ \lambda\in\Psp(\R) $ and $ a\in[0,1] $, let
\begin{subequations}
\label{e.a+-}
\begin{align}
	a_- = a_-[\lambda](a) &:= \inf\{ a' \in [0,1] : \quant[\lambda](a')=\quant[\lambda](a) \},
\\
	a_+ = a_+[\lambda](a) &:=  \sup\{ a'\in [0,1] : \quant[\lambda](a')=\quant[\lambda](a) \}.
\end{align}
\end{subequations}
Note that $ a_-<a_+ $ if and only if $ \lambda $ has an atom at $ \quant[\lambda](a) $. 
Refer to the second expression of $ \Sgn[\lambda] $ in \eqref{e.Sgn} and note that $ \lambda((-\infty,\quant[\lambda](a))) = a_- $ and $ \lambda((\quant[\lambda](a),\infty)) = (1-a_+) $.
We have
\begin{align}
	\label{e.Sgn.id}
	\Sgn[\lambda]\big(\quant[\lambda](a)\big)
	=
	\begin{cases}
		\frac12 - a, & \text{ when } a_-=a_+,
	\\
		\frac12- \frac{a_-+a_+}{2} = \frac{1}{a_+-a_-} \int_{a_-}^{a_+} \d a \, (\frac12 - a), & \text{ when } a_-<a_+.
	\end{cases}
\end{align}
Fix any $\f\in\Lsp^1(\R,\lambda)$.
By \eqref{e.Sgn.id}, we have 
$
	\f(\quant[\lambda](a))\cdot \Sgn[\lambda](\quant[\lambda](a))
	=
	\f(\quant[\lambda](a))\cdot (\tfrac12-a)
$
whenever $a_-=a_+$, and
$
	\int_{a_-}^{a^+} \d a \, \f(\quant[\lambda](a))\cdot \Sgn[\lambda](\quant[\lambda](a))
	=
	\int_{a_-}^{a^+} \d a \, \f(\quant[\lambda](a))\cdot (\tfrac12-a)
$
whenever $a_-<a_+$.
Combining these properties gives
\begin{align}
	\label{e.Sgn.id.int}
%	\ip{\lambda,\Sgn[\lambda]\f}
%	=
	\int_0^1 \d a \, \f\big(\quant[\lambda](a)\big) \cdot \Sgn[\lambda]\big(\quant[\lambda](a)\big)
	=
	\int_0^1 \d a \, \f\big(\quant[\lambda](a)\big) \, (\tfrac12-a).
\end{align}

\subsection{Dividing a measure}
\label{s.basic.divide}
Let us introduce a procedure of dividing a given $ \mu\in\Csp([0,\termts],\Psp(\R)) $ into pieces with constant masses.
Fix any $ \mm_1,\ldots,\mm_n \in (0,1] $ that add up to $ \mm=1 $.
For any $ s\in[0,\termts] $ and $ \cc \in \{1,\ldots,n\} $, consider $ \cdf_\cc(s,x) := \max\{\min\{ \cdf[\mu(s)](x)- (\mm_1+\ldots+\mm_{\cc-1}) , \mm_\cc \},0\}  $.
Namely, we consider the graph of $\cdf[\mu(s)](\Cdot)$ between the horizontal levels $(\mm_1+\ldots+\mm_{\cc-1})$ and $(\mm_1+\ldots+\mm_{\cc})$ and shift the graph down so that the lower level is at $0$.
The result $\cdf_\cc(s,x)$ is the CDF of a measure with total mass $\mm_{\cc}$, and we let $\mu_\cc(s)\in \mm_\cc\Psp(\R)$ denote that measure.

Here are a few properties of $\mu_\cc$.
First, $\mu_\cc(s)$ is continuous in $s$.
To see why, note that, by construction, $ |\cdf_\cc(s',x)-\cdf_\cc(s,x)| \leq |\cdf(s',x)-\cdf(s,x)|$.
Using this property in conjunction with \eqref{e.dist} and the continuity of $\mu$ gives the continuity of $\mu_\cc$.
Next, by construction, the measure $ \mu_\cc(s) $ is supported in $ [\quant[\mu(s)](\mm_1+\ldots+\mm_{\cc-1}),\quant[\mu(s)](\mm_1+\ldots+\mm_{\cc})] $.
Finally, note that $ \mu_{\cc}(s) $ and $ \mu_{\cc+1}(s) $ can both have an atom at $ \quant[\mu(s)](\mm_1+\ldots+\mm_{\cc}) $.

\section{LDP for the attractive BPs: properties of the rate functions}
\label{s.ratefn}

As explained in Section~\ref{s.basic.m=1}, we will only consider $ \mm=1 $, so all deviations take value in $ \Psp(\R) $.

\subsection{The quantile representation}
\label{s.rate.quantile}
Here, we show that $ \rate = \rateq $ on $ \Csp([0,\termts],\Psp(\R)) $.
To simplify notation, write $ \quant[\mu(s)](a) = \quant(s,a) $, $ \Sgn[\mu(s)](x) = \Sgn(s,x) $, and $ \cdf[\mu(s)](x) := \cdf(s,x) $.

\medskip
\noindent\textbf{Case~1: $ \boldsymbol{\partial_s\quant[\mu] \in \Lsp^2([0,\termts]\times[0,1])} $.}
We seek to express the terms in \eqref{e.rate} in the quantile coordinate.
Take any $ \h\in{\Cbsp}^{1,1}([0,\termts],\R) $.
Under the assumption of Case~1, $ \quant(s,a) $ is differentiable in $s$ Lebesgue almost everywhere (a.e.) on $ [0,\termts]\times[0,1] $, so $ \frac{\d~}{\d s} \h(s,\quant(s,a)) - \partial_s \h(s,\quant(s,a)) = \partial_x \h(s,\quant(s,a)) \partial_s\quant(s,a) $ Lebesgue a.e.
Apply $ \int\!\!{}_{[0,\termts]} \d s \int_0^1 \d a $ to both sides. 
On the left hand side of the result, use Lemma~\ref{l.useful}\ref{l.useful.inverseCDF} in reverse to turn the result into $\ip{\mu(s),\h(s)}|_{0}^{\termts}-\int\!\!{}_{[0,\termts]} \d s \ip{\mu,\partial_s\h}$, which gives the first two terms in \eqref{e.rateM}.
Next, use Lemma~\ref{l.useful}\ref{l.useful.inverseCDF} to express the last term in \eqref{e.rateM} and the integral term in \eqref{e.rate} in the quantile coordinate.
Collecting the preceding results gives
\begin{align}
	\label{e.p.rate=rateq.0}
	\rateM(\mu,\h) -\int_{0}^{\termt} \d s \, \frac12 \Ip{ \mu, (\partial_x \h)^2 }
	=
	\int_0^{\termt} \d s \int_0^1 \d a \, \Big( (\partial_x \h)_\quant( \partial_s\quant - \Sgn_\quant ) - \frac{1}{2}\big((\partial_x \h)_\quant\big)^2 \Big),
\end{align}
where the subscript means $(\varphi_\quant)(s,a) := \varphi(t,\quant(s,a))$, transformation to the quantile coordinate.
Taking the sumpremum over $\hh\in\Cbsp^{1,1}([0,\termts],\R)$ gives
\begin{align}
	\label{e.p.rate=rateq.1}
	\rate(\mu)
	=
	\sup_{\h} \int_0^{\termt} \d s \int_0^1 \d a \, \Big( (\partial_x \h)_\quant( \partial_s\quant - \Sgn_\quant ) - \frac{1}{2}\big((\partial_x \h)_\quant\big)^2 \Big).
\end{align}

Let us simplify \eqref{e.p.rate=rateq.1}.
Within the integral, write $ \partial_x \h = \g $ and complete the square to get $ -\frac12 (\g_\quant - \partial_s\quant + \Sgn_\quant)^2 + \frac12(\partial_s\quant - \Sgn_\quant)^2 $, and recognize the contribution of $\frac12(\partial_s\quant - \Sgn_\quant)^2$ as $ \rateq(\mu) $.
The supremum is taken over $ \h\in{\Cbsp}^{1,1}([0,\termts],\R) $. 
Via an approximation argument, we can replace this supremum with the supremum over those $\g$s with $ \int\!\!{}_{[0,\termts]} \d s \,\ip{\mu, \g^2} < \infty $.
Doing so gives
\begin{align}
	\label{e.p.rate=rateq.2}
	\rate(\mu)
	=
	\rateq(\mu)
	-
	\inf \Big\{ 
		\int_0^{\termt} \d s \int_0^1 \d a \, \frac12 \big( \g_\quant - \partial_s\quant + \Sgn_\quant \big)^2 
		\, : \, 
		\int_0^{\termt} \d s \, \ip{\mu, \g^2} < \infty 
	\Big\}.
\end{align}

It remains only to show that the infimum in \eqref{e.p.rate=rateq.2} is zero.
%When the CDF $ \cdf(s,x) $ strictly increases in $ x $ for all $s$, choosing $ \g = (\partial_s\quant)_{\cdf} - \Sgn $ completes the proof.
%Otherwise, 
Take any partition $ \tparti = \{0=s_0<s_1<\ldots < s_{|\tparti|}=\termt\} $ of $[0,\termts]$, for any $\varphi\in\Csp[0,\termt]$, let
\begin{align}
	\label{e.discretederivative}
	(\partial_\tparti\varphi)(s) := \sum_{j=1}^{|\tparti|}\frac{\varphi(s_{j})-\varphi(s_{j-1})}{s_j-s_{j-1}}\ind_{[s_{j-1},s_j)}(s)
\end{align} 
denote the discrete-time derivative with respect to $\tparti$, and take $\g(s,x) := (\partial_\tparti\quant(\Cdot,\cdf(\Cdot,x))(s) - \Sgn(s,x)$.
Insert this $\g$ into \eqref{e.p.rate=rateq.2}.
The terms $-\Sgn_\quant$ and $\Sgn_\quant$ cancel each other.
For the discrete-derivative term, note that $ \quant(s_j,\cdf(s_j,x))_{\quant} := \quant(s_j,\cdf(s_j,\quant(s_j,a))) $.
It is not hard to check that, for every $ s_j $, the last expression is equal to $ \quant(s_j,a) $ Lebesgue a.e.\ on $ [0,1] $.
The resulting integral hence reads $ \int\!\!{}_{[0,\termts]} \d s \int_0^1 \d a \, \frac12 (\partial_\tparti\quant - \partial_s\quant)^2 $.
Under the assumption of Case~1, this integral tends to zero as the mesh of $\tparti$ tends to zero.
This completes the proof for Case~1.

\medskip
\noindent\textbf{Case~2: $ \boldsymbol{\partial_s\quant[\mu] \notin \Lsp^2([0,\termts]\times[0,1])} $.}
In this case, by definition, $\rateq(\mu):=\infty$.
Take any partition $ \tparti = \{0=s_0<s_1<\ldots < s_{|\tparti|}=\termt\} $ of $[0,\termts]$ and let $\partial_\tparti$ be as in \eqref{e.discretederivative}.
We will show that
\begin{align}
	\label{e.p.rate=rateq.case2}
	\rate(\mu) +\frac{\termt}{8} \geq \frac{1}{4} \int_0^{\termt} \d s \int_0^1 \d a \, \big( \partial_\tparti \quant \big)^2.
\end{align}
By the assumption of Case~2 and Lemma~\ref{l.useful}\ref{l.useful.quant.conti}, the right hand side of \eqref{e.p.rate=rateq.case2} tends to $\infty$ as the mesh of $\tparti$ tends to zero.
Hence proving \eqref{e.p.rate=rateq.case2} will give the desired result $\rate(\mu)=\rateq(\mu)=\infty$.

The proof will invoke a variant of $\rate$.
Let us introduce this variant and its properties.
\begin{align}
	\label{e.rate0.j}
	\rate^\circ{}_{[s',s'']}(\mu)
	:=
	\sup_{\h \in {\Cbsp}^{1,1}([s',s''],\R)}
	\Big\{ 	
		\ip{ \mu(s), \h(s) }\big|_{s'}^{s''} - \int_{s'}^{s''}\d s \, \ip{ \mu, \partial_s\h } - \int_{s'}^{s''}\d s \, \ip{ \mu, (\partial_x\h)^2 } 	
	\Big\}.
\end{align}
The following properties are not difficult to verify, which we do in Appendix~\ref{s.a.basic}.
\begin{enumerate}[leftmargin=20pt, label=(\alph*),topsep=0pt]
\item \label{eu.rate0.timeadd}
Additivity in time: $\rate^\circ{}_{[s_1,s_2]}+\rate^\circ{}_{[s_2,s_3]}=\rate^\circ{}_{[s_1,s_3]}$.
\item \label{eu.rate0.timemono}
Monotonicity in time: For all $[s_2,s_3]\subset[s_1,s_4]$, $\rate^\circ{}_{[s_2,s_3]}\leq\rate^\circ{}_{[s_1,s_4]}$.
\item \label{eu.rate0.convex}
Convexity: $ \mu\mapsto\rate^\circ{}_{[s',s'']}(\mu) $ is convex.
\item \label{eu.rate0.spacetranslation}
Space translation invariance: $ \rate^\circ{}_{[s',s'']}(\shift_{0,y}\mu)=\rate^\circ{}_{[s',s'']}(\mu) $, where $ \shift_{0,y} $ translates $ \mu $ in space by $ y $, namely $ \ip{\shift_{0,y}\mu(s),\f} := \ip{\mu(s),\f(\Cdot-y)} $ for $ \f\in\Cbsp(\R) $.
\item \label{eu.rate0.stationary=0}
For any time-independent $\lambda\in\Psp(\R)$, $ \rate^\circ{}_{[s',s'']}(\lambda)=0$.
\end{enumerate}

We begin by bounding $\rate(\mu)$ from below.
Young's inequality and the property $ |\Sgn[\mu]| \leq 1/2 $ together give $ -\Sgn[\mu] \partial_x \h \geq - \frac18 - \frac{1}{2}(\partial_x \h)^2 $.
Insert this inequality into \eqref{e.rateM}--\eqref{e.rate}, and, within the result, move the term $ \int\!\!_{[0,\termts]} \d s\, \ip{\mu,-1/8} = -\termt/8 $ to the left hand side.
Doing so gives $\rate(\mu)+\termt/8\geq\rate^\circ{}_{[0,\termts]}(\mu)$.
Next, we mollify $\mu$. 
Define the time-space translation operator by $\ip{(\shift_{s,y}\nu)(s'),\f}:=\ip{\nu(s'+s),\f(\Cdot-y)}$, take $\varphi_\delta(x):=\exp(-x^2/(2\delta))/\sqrt{2\pi\delta}$, and use $\varphi_\delta$ to mollify $\mu$ to get 
\begin{align}
	\label{e.a1}
	\mu_\delta:=\int_\R \d s \int_\R\d y\, \varphi_\delta(s)\varphi_\delta(y)\shift_{s,y}\mu,
\end{align} 
with the convention $\mu(s)|_{s<0}:=\mu(0)$ and $\mu(s)|_{s>\termt}:=\mu(\termt)$.
Apply $\rate^\circ{}_{[0,\termts]}(\Cdot)$ to both sides of \eqref{e.a1}, use Property~\ref{eu.rate0.convex}, and use Property~\ref{eu.rate0.spacetranslation}.
Doing so gives
\begin{align}
	\rate^\circ{}_{[0,\termts]}(\mu_\delta)
	\leq
	\int_{\R}\d s\int_{\R}\d y\, \varphi_\delta(s)\varphi_\delta(y)\rate^\circ{}_{[0,\termts]}(\shift_{s,y}\mu)
	=
	\int_{\R}\d s\,\varphi_\delta(s)\,\rate^\circ{}_{[0,\termts]}(\shift_{s,0}\mu).
\end{align}
Next, for $ s\geq 0 $, using Properties~\ref{eu.rate0.timeadd}, \ref{eu.rate0.stationary=0}, and \ref{eu.rate0.timemono} in order gives
$
	\rate^\circ{}_{[0,\termts]}(\shift_{s,0}\mu) 
	=
	\rate^\circ{}_{[0,\termt-s]}(\mu)
	+
	\rate^\circ{}_{[\termt-s,\termts]}(\mu(\termt))
	=
	\rate^\circ{}_{[\termt-s,\termts]}(\mu)
	+ 
	0
	\leq
	\rate^\circ{}_{[0,\termts]}(\mu) + 0,
$
and similarly for $ s<0 $.
Hence 
\begin{align}
	\rate^\circ{}_{[0,\termts]}(\mu_\delta)
	\leq
	\Big(\int_{\R}\d s\, \varphi_\delta(s)\Big)\, \rate^\circ{}_{[0,\termts]}(\mu)
	=
	\rate^\circ{}_{[0,\termts]}(\mu)
	\leq
	\rate(\mu) + \frac{\termt}{8}.
\end{align}
Write $\rate^\circ_j:=\rate^\circ{}_{[s_{j-1},s_j]}$.
The last bound and Property~\ref{eu.rate0.timeadd} give 
\begin{align}
	\label{e.a2}
	\rate(\mu)+\frac{\termt}{8} \geq
	\rate^\circ_1(\mu_\delta)+\ldots+\rate^\circ_{|\tparti|}(\mu_\delta).
\end{align}

We next bound $\rate^\circ_j(\mu_\delta)$ from below.
Write $\cdf_\delta(s,x):=\cdf[\mu_\delta(s)](x)$ and $\quant_\delta(s,a):=\quant[\mu_\delta(s)](a)$ to simplify notation.
By the construction of $\mu_\delta$, both functions are $\Csp^\infty$.
Further, since the mollifier $\varphi_\delta$ is strictly positive everywhere, $\cdf_\delta(s,x)$ strictly increases in $x$, whereby $\cdf_\delta(s,\quant_\delta(s,a))=a$ for all $s$ and $a$.
Fix any $v\in\Csp^\infty[0,1]$ and let $\h(s,x):=\int_{-\infty}^x \d y \, v(\cdf_\delta(s,y))$.
For this test function, $\frac{\d~}{\d s} \h(s,\quant_\delta(s,x))=(\partial_s\h)_{\quant_\delta}+(\partial_x\h)_{\quant_\delta} \partial_s\quant_\delta$ and $(\partial_x\h)_{\quant_\delta} =(v\circ\cdf_\delta)_{\quant_\delta} $, which is equal to $v$ thanks to the relation $\cdf_\delta(s,\quant_\delta(s,a))=a$.
Hence $\frac{\d~}{\d s} \h(s,\quant_\delta(s,x))-(\partial_s\h)_{\quant_\delta}-(\partial_x\h)_{\quant_\delta}^2=v\,\partial_s\quant_\delta-v^2$.
Apply $\int\!\!{}_{[s_{j-1},s_j]}\d s\int\!\!{}_{[0,1]} \d a$ to both sides and compare the result with \eqref{e.rate0.j}.
Doing so gives
\begin{align}
	\label{e.a3}
	\rate^\circ_j(\mu_{\delta}) \geq \int_0^1 \d a \, \Big( v(a) \quant_\delta(s,a)\big|_{s_{j-1}}^{s_j} - v(a)^2(s_j-s_{j-1}) \Big).
\end{align}
Optimizing \eqref{e.a3} over $v\in\Csp^\infty[0,1]$ gives
\begin{align}
	\label{e.a4}
	\rate^\circ_j(\mu_{\delta})
	\geq
	(s_j-s_{j-1}) \int_0^1 \d a \, \frac{1}{4} \Big( \frac{\quant_\delta(s_j,a)-\quant_\delta(s_{j-1},a)}{s_j-s_{j-1}} \Big)^2.
\end{align}

We are now ready to conclude the desired result.
Combining \eqref{e.a2} and \eqref{e.a4} gives
\begin{align}
	\label{e.a5}
	\rate(\mu) + \frac{\termts}{8} \geq \frac{1}{4} \int\!\!{}_{[0,\termts]} \d s \int\!\!{}_{[0,1]} \d a \, ( \partial_\tparti \quant_\delta )^2.
\end{align}	
With the aid of Lemma~\ref{l.useful}\ref{l.useful.quant.conti}, it is not hard to check that $\quant_\delta\to\quant$ as $\delta\to 0$ Lebesgue a.e.\ on $[0,\termts]\times[0,1]$.
Given this property, sending $\delta\to0$ in \eqref{e.a5} with the aid of Fatou's lemma gives the desired result \eqref{e.p.rate=rateq.case2}.

\subsection{The rate function $ \rate_\star $ is a good}
\label{s.rate.good}
Here we prove that $ \rate_\star $ is good.
Recall from \eqref{e.rate.ic} that $ \rate_\star $ consists of $ \rate $ and a dependence on the initial condition.
It hence suffices to show that $ \rate=\rateq $ is a good rate function when restricted to $ \{\mu: \mu(0) = \mu_\Start \} $.
We will show that $ \rate $ is lower-semicontinuous on $ \Csp([0,\termts],\Psp(\R)) $ and that $ S := \{ \mu : \rateq(\mu) \leq r, \mu(0)=\mu_\Start \} $ is precompact.

Let us show the lower semicontinuity of $ \rate $. 
We begin with some reductions.
Recall from \eqref{e.rate} that $\rate$ is defined as a supremum.
Since the supremum of any set of continuous functions is lower semicontinuous, it suffices to check that, for any $\h\in{\Cbsp}^{1,1}([0,\termts],\R)$,
\begin{align}
	\label{e.ratesup.conti}
	\Csp([0,\termts],\Psp(\R)) \to \R,
	\quad
	\mu \longmapsto \rateM(\mu,\h) - \int_0^{\termt} \d s\,\frac{1}{2} \ip{\mu, (\partial_x\h)^2}
	\quad
	\text{is continuous.}
\end{align}
Every term in \eqref{e.rateM} and \eqref{e.rate} is readily seen to be continuous except for $ \int\!\!{}_{[0,\termts]} \d s \,\ip{\mu, \Sgn[\mu](\partial_x\h)} $.
It hence suffices to show that, for any $ \f\in\Cbsp(\R) $, the map $ \Psp(\R)\to\R $: $ \lambda\mapsto\ip{\lambda,\Sgn[\lambda]\f} $ is continuous.
To this end, combine Lemma~\ref{l.useful}\ref{l.useful.inverseCDF} and the identify \eqref{e.Sgn.id.int} to get
$
	\ip{ \lambda, \Sgn[\lambda] \f }
	=
	\int_0^1 \d a \, (\tfrac12 -a) \, \f(\quant(a)).
$
Take any sequence $ \lambda_k \Rightarrow \lambda $.
Note that $ \lambda_k \Rightarrow \lambda $ implies $ \quant[\lambda_k] \to \quant[\lambda] $ Lebesgue a.e.\ on $ [0,1] $.
This property together with the bounded convergence theorem gives $ \ip{ \lambda_k, \Sgn[\lambda_k] \f } =\int_0^1 \d a \, (\tfrac12 -a) \, \f(\quant[\lambda_k](a)) \to \int_0^1 \d a \, (\tfrac12 -a) \, \f(\quant(a)) = \ip{ \lambda, \Sgn[\lambda] \f } $.

To prepare for the proof of the precompactness, we derive a time-continuity estimate.
Recall from \eqref{e.rateq} that $ \rateq $ is defined as an integral over time.
Fix $ s'<s''\in[0,\termts] $, forgo the integral outside $ [s',s''] $, factor out $\frac12(s''-s') $ from the integral, and apply Jensen's inequality with respect to 
$ 
	(s''-s')^{-1}\int\!\!{}_{[s',s'']}\d s\int\!\!{}_{[0,1]}\d a 
$.
Doing so gives
\begin{align}
	\rateq(\mu) 
	\geq
	\frac12 (s''-s') \Big( 
		\frac{\int_0^1 \d a \, \quant[\mu(s)]|_{s'}^{s''}}{s''-s'} - \frac{\int_{s'}^{s''}\d s \int_{0}^{1} \d a \, \Sgn[\mu]_{\quant}}{s''-s'}
	\Big)^2.
\end{align}
Call the first and second terms within the last square $ b $ and $ b' $ respectively.
Use the inequality $ (b-b')^2 \geq \frac12 b^2 - {b'}^2 $ and note that $ |b'|\leq\norm{\Sgn[\mu]}_\infty \leq 1/2 $.
After being simplified, the result reads
$
	(s''-s')\rateq(\mu) + \frac{(s''-s')^2}{8} \geq \frac14 \int_0^1 \d a\, (\quant[\mu(s'')]-\quant[\mu(s')] )^2.
$
By the Cauchy--Schwarz inequality, the last integral is bounded from below by $ \int_{0}^{1}\d a\, |\quant[\mu(s'')]-\quant[\mu(s')]| $.
We arrive at the time-continuity estimate:
\begin{align}
	\label{e.good.conti}
	\frac14 \int_0^1 \d a \,\Big|\quant[\mu(s'')]-\quant[\mu(s')]\Big|
	\leq
	(s''-s')\rateq(\mu) + \frac{1}{8}(s''-s')^2. 
\end{align}

Based on \eqref{e.good.conti}, we fix $ r\in[0,\infty) $ and $\mu_\Start\in\Psp(\R)$ and show the precompactness of $ S:=\{ \mu : \rateq(\mu) \leq r, \mu(0)=\mu_\Start \} $. 
We will do so by verifying the conditions in Lemma~\ref{l.precompact}.
Referring to \eqref{e.wass.inverseCDF}--\eqref{e.dist}, we see that $ \dist(\mu(s''),\mu(s')) $ is bounded by 4 times the left hand side of \eqref{e.good.conti}.
Hence the equicontinuity of $S$, which is required by Lemma~\ref{l.precompact}\ref{l.precompact.conti}, follows.
To verify the condition in Lemma~\ref{l.precompact}\ref{l.precompact.tailbd}, take any $ \mu\in S $ and write $ \ip{\mu(s),\ind_{\R\setminus[-b,b]}} $ as $ \int\d a \, \ind\{|\quant[\mu(s)]|>b\} $.
Bound the last integral by $ \int\d a\, \ind\{|\quant[\mu(0)]|>b/2\} + \int\d a \,\ind\{|\quant[\mu(s)]-\quant[\mu(0)]|>b/2\} $.
Recognize the former integral as $ \ip{\mu(0),\ind_{\R\setminus[-b/2,b/2]}} $ and bound the second integral by using Markov's inequality and \eqref{e.good.conti} for $ (s',s'')=(0,s) $.
Doing so gives the bound $ (8/b)(s\rateq(\mu)+s^2/8)$, which is at most $(8r\termt +\termt^2)/b $.
Hence, $ \ip{\mu(s),\ind_{\R\setminus[-b,b]}} \leq \ip{\mu_\Start,\ind_{\R\setminus[-b/2,b/2]}} + (8r\termt +\termt^2)/b $.
From this, we see that the condition in Lemma~\ref{l.precompact}\ref{l.precompact.tailbd} is satisfied for a suitable choice of $ b_k\to\infty $.

\section{LDP for the attractive BPs: upper bound}
\label{s.upbd}
Here we prove the LDP upper bound in Theorem~\ref{t.ldp}.
We achieve this by first establishing the exponential tightness of $ \EM_N $ (defined at the beginning of Section~\ref{s.upbd.exptight}) and then proving the weak LDP upper bound (defined at the beginning of Section~\ref{s.upbd.weak}).
As was explained in Section~\ref{s.basic.m=1}, we consider $ \mm=1 $ only.

\subsection{Exponential tightness}
\label{s.upbd.exptight}
Here, we seek to prove that $\EM_N$ is exponentially tight, which means, for any $\e>0$, there exists a precompact $ S \subset \Csp([0,\termts],\Psp(\R)) $ such that
\begin{align}
	\label{e.exptight}
	\limsup_{N\to\infty} \frac{1}{N^3T} \log \P[ \EM_N \notin S ] \leq -\frac{1}{\e}.
\end{align}

The first step is to devise some events to control the BMs $\bm_\ii$ in \eqref{e.aBP}.
Consider the events
\begin{align}
	\label{e.event.bmcontrol.}
	\calU_N([s_1,s_2],v) 
	&:= 
	\Big\{ 
		\sum_{\ii=1}^N \sup_{s\in[s_1,s_2]}|\bm_{\ii}(Ts) - \bm_{\ii}(Ts_1)|  \leq N^2T v |s_2-s_1|^{1/3} 
	\Big\},
\\
	\label{e.event.bmcontrol}
		\calU_N(v)
		&:= 
		\bigcap_{\ell\geq 1} \bigcap_{j=1}^{\ell} \calU_N\big( \big[ \tfrac{j-1}{\ell}\termt, \tfrac{j}{\ell}\termts\big], v \big).
\end{align}
To control the summand in \eqref{e.event.bmcontrol.}, set
$
	\bar{\bm}_{\ii}[s_1,s_2]:=\sup_{s\in[s_1,s_2]} (\bm_{\ii}(Ts) - \bm_{\ii}(Ts_1))
$
and
$
	\und{\bm}_{\ii}[s_1,s_2]:=-\inf_{s\in[s_1,s_2]} (\bm_{\ii}(Ts) - \bm_{\ii}(Ts_1))
$
and write
\begin{align}
	\label{e.event.bmcontrol1}
	\sup_{s\in[s_1,s_2]}|\bm_{\ii}(Ts) - \bm_{\ii}(Ts_1)|
	=
	\max\{ \bar{\bm}_{\ii}[s_1,s_2], \und{\bm}_{\ii}[s_1,s_2] \}
	\leq
	\bar{\bm}_{\ii}[s_1,s_2] + \und{\bm}_{\ii}[s_1,s_2].
\end{align}
Sum \eqref{e.event.bmcontrol1} over $\ii=1,\ldots,N$.
For $\lambda\in\R$, we have the inequality
\begin{align}
	\label{e.event.bmcontrol1.}
	\E\big[e^{\lambda \sum_{\ii=1}^N \sup_{s\in[s_1,s_2]}|\bm_{\ii}(Ts) - \bm_{\ii}(Ts_1)| }\big]
	\leq
	\E\big[e^{\lambda \sum_{\ii=1}^{N}\bar{\bm}_{\ii}[s_1,s_2]} \cdot e^{\lambda\sum_{\ii=1}^{N}\und{\bm}_{\ii}[s_1,s_2]}\big].
\end{align}
The random variables $ \bar{\bm}_{\ii}[s_1,s_2] $ and $\und{\bm}_{\ii}[s_1,s_2]$ each have the same law as $ \sqrt{T(s_2-s_1)}|\bm_{\ii}(1)| $.
On the right hand side of \eqref{e.event.bmcontrol1.}, use the Cauchy--Schwarz inequality, use the just-mentioned property about the law, and use the property that $\bm_{1},\ldots,\bm_{N}$ are independent.
Doing so gives
\begin{align}
	\label{e.event.bmcontrol1..}
	\E\big[e^{\lambda \sum_{\ii=1}^N \sup_{s\in[s_1,s_2]}|\bm_{\ii}(Ts) - \bm_{\ii}(Ts_1)| }\big]
	\leq
%	\E\big[ e^{2\lambda \sum_{\ii=1}^{N} \sqrt{T(s_2-s_1)}|\bm_{\ii}(1)| }\big]
%	=
	\big( \E\big[ e^{2\lambda \sqrt{T(s_2-s_1)}|\bm_{1}(1)| }\big] \big)^{N}.
\end{align}
The inequality $\exp(|r|)\leq \exp(r)+\exp(-r)$ and the identity $\E[\exp(\pm r\bm_1(1))]=\exp(r^2/2)$ together give $\E[ e^{2\lambda \sqrt{T(s_2-s_1)}|\bm_{1}(1)| }]\leq 2 e^{2\lambda^2 T(s_2-s_1)}$.
Using this in \eqref{e.event.bmcontrol1..} gives
\begin{align}
	\label{e.event.bmcontrol1...}
	\E\big[e^{\lambda \sum_{\ii=1}^N \sup_{s\in[s_1,s_2]}|\bm_{\ii}(Ts) - \bm_{\ii}(Ts_1)| }\big]
	\leq
	2^N e^{2\lambda^2 NT(s_2-s_1)}.
\end{align}
Combining \eqref{e.event.bmcontrol1...} and Chernoff's bound $\P[A>a]\leq e^{-\lambda a}\E[e^{\lambda A}]$ with $A=\sum_{\ii=1}^N\sup_{s\in[s_1,s_2]}|\bm_{\ii}(Ts) - \bm_{\ii}(Ts_1)|$, $a=N^2T v |s_2-s_1|^{1/3}$, and $\lambda=N|s_2-s_1|^{-2/3}v/4$ yields
%\begin{align}
%	\label{e.event.bmcontrol2}
%	\P\Big[\,\sup_{s\in[s_1,s_2]}\sum_{\ii=1}^N|\bm_{\ii}(Ts) - \bm_{\ii}(Ts_1)| > NT v |s_2-s_1|^{1/3}  \Big] \leq 2 \exp\big(-\tfrac{1}{8} N^2Tv^2|s_2-s_1|^{-1/3}\big).	
%\end{align}
%Since $\bm_1,\ldots,\bm_N$ are independent, taking the product of \eqref{e.event.bmcontrol2} over $\ii=1,\ldots,N$ gives
\begin{align}
	\label{e.event.bmcontrol3}
	\P[\,\calU_N([s_1,s_2],v)^\comple] \leq 2^{N} \exp\big(-\tfrac{1}{8} N^3Tv^2|s_2-s_1|^{-1/3}\big).	
\end{align}
Next, use \eqref{e.event.bmcontrol3} and the union bound to bound $\P[\,\calU_N(v)^\comple] $.
We have, for all $v\geq 1$ and some universal constant $c<\infty$,
\begin{align}
	\label{e.event.bmcontrol4}
	\P[\,\calU_N(v)^\comple] 
	\leq 
	2^{N} \, \sum_{\ell=1}^\infty \ell \cdot e^{-\tfrac{1}{8} N^3Tv^2\ell^{1/3}} 
	\leq
	c 2^{N} \, e^{-\tfrac{1}{8} N^3Tv^2}.		
\end{align}
Under \eqref{e.scaling}, the factor $c2^N$ is negligible compared to $\exp(O(N^3T))$.
Use \eqref{e.event.bmcontrol4} to fix a large enough $v$ such that 
\begin{align}
	\limsup_{N\to\infty} \frac{1}{N^3T} \log \P\big[ \calU_N(v)^\comple \big]
	\leq
	-\frac{1}{\e}.
\end{align}
Having fixed $v$, we write $\calU_N(v)=\calU_N$ hereafter.

We next prove that, under $ \calU_N $, all samples of $ \EM_N $ are contained in a fixed precompact set.
This will imply the desired exponential tightness.
Proving this statement amounts to verifying the conditions in Lemma~\ref{l.precompact} --- for all realizations of $ \EM_N $ under $ \calU_N $ --- with a fixed choice of $ b_1,b_2,\ldots\to \infty $.

To verify the equicontinuity required by Lemma~\ref{l.precompact}\ref{l.precompact.conti}, apply \eqref{e.wass.coupling} to get
\begin{align}
	\label{e.event.bmcontrol6}
	\dist(\EM_N(s'), \EM_N(s'')) \leq \frac{1}{N}\sum_{\ii=1}^{N} \frac{|\X_{\ii}(Ts'')-\X_{\ii}(Ts')|}{NT},
	\qquad
	\text{for all }  s'<s''\in[0,\termts].
\end{align}
To bound the last sum, integrate \eqref{e.aBP} over $ [Ts',Ts''] $ and use $ |\sgn| \leq 1 $.
Doing so gives
\begin{align}
	\label{e.event.bmcontrol7}
	\frac{1}{N}\sum_{\ii=1}^{N} \frac{|\X_{\ii}(Ts'')-\X_{\ii}(Ts')|}{NT}
	\leq 
	\frac{|s''-s'|}{2} 
	+ 
	\frac{1}{N}\sum_{\ii=1}^{N} \frac{|\bm_{\ii}(Ts'')-\bm_{\ii}(Ts')|}{NT}.
\end{align}
Under $\calU_N$, the last term in \eqref{e.event.bmcontrol7} is bounded by $2v |s''-s'|^{1/3}$.
This is because, for any given $s'<s''\in[0,\termt]$, we can find $\ell\in\Z_{> 1}$ and $j\in\{1,\ldots,\ell-1\}$ such that $s',s''\in[\termt(j-1)/\ell,\termt(j+1)/\ell]$ and that $s''-s'> \termt/\ell$.
Hence
\begin{align}
\begin{split}
\label{e.aBP.control}
	&\text{under } \calU_N,
	\text{ for all } s'<s''\in[0,\termts],
\\
	&\quad
	\frac{1}{N}\sum_{\ii=1}^N \frac{1}{NT} |\X_{\ii}(Ts'')-\X_{\ii}(Ts')|
	\leq
	\frac{1}{2} |s''-s'|
	+
	2v |s''-s'|^{1/3}.
\end{split}
\end{align}
Combining \eqref{e.event.bmcontrol6} and \eqref{e.aBP.control} gives, under $\calU_N$, $\dist(\EM_N(s'), \EM_N(s'')) \leq |s''-s'|/2 + 2v |s''-s'|^{1/3} $, so the desired equicontinuity holds.

To verify the condition in Lemma~\ref{l.precompact}\ref{l.precompact.tailbd}, start by writing 
\begin{align}
	\label{e.exptight.tailbd}
	\ip{ \EM_N(s), \ind_{\R\setminus[-2b,2b]} }
	\leq
	\frac{1}{N} \sum_{\ii=1}^{N} \ind\{ \tfrac{1}{NT}|\X_{\ii}(Ts)-\X_{\ii}(0)| > b \} 
	+
	\ip{ \EM_N(0), \ind_{\R\setminus(-b,b)} }.
\end{align}
We have $\limsup_{N\to\infty}\ip{ \EM_N(0), \ind_{\R\setminus(-b,b)} } \leq \ip{ \mu_\Start, \ind_{\R\setminus(-b,b)} } $, because $ \EM_N(0)\Rightarrow \mu_\Start $ and because $ \R\setminus(-b,b) $ is closed.
Next, bound the summand in \eqref{e.exptight.tailbd} by $ b^{-1} |\X_{\ii}(Ts)-\X_{\ii}(0)|/(NT) $ and apply \eqref{e.aBP.control}.
The result gives that, under $ \calU_N $, the first term on the right side of \eqref{e.exptight.tailbd} is bounded by $ b^{-1} (s\cdot 2^{-1} +2v s^{1/3}) \leq b^{-1} (\termt/2+2v\termt^{1/3}) $.
These bounds together verify the condition in Lemma~\ref{l.precompact}\ref{l.precompact.tailbd} for $ b_k := (\termt/2+2v\termt^{1/3})2 k $.
 
\subsection{The weak upper bound} 
\label{s.upbd.weak} 
We begin by stating the goal.
First, given the exponential tightness, it suffices to prove a weak LDP upper bound, namely the LDP upper bound where the closed set is assumed to be compact.
Fix any compact $ K \subset \Csp([0,\termts],\Psp(\R)) $.
Recall $ \rate_\star $ from \eqref{e.rate.ic}.
If $ K \cap\{\mu : \mu(0) = \mu_\Start \} = \emptyset $, then $ \inf_{K} \rate_\star = +\infty $ and the desired upper bound follows trivially.
We hence assume $ K \cap\{\mu : \mu(0) = \mu_\Start \} \neq \emptyset $, whereby $ \inf_{K} \rate_\star = \inf_{K} \rate $.
Further, recall from \eqref{e.rate} that $ \rate $ is defined as a supremum over $ \h \in {\Cbsp}^{1,1}([0,\termts],\R) $.
Via an approximation argument, the supremum can be replaced by the one over $ \h \in {\Ccsp}^\infty([0,\termts],\R) $.
Hence our goal is to show
\begin{align}
	\label{e.upbd.weak}
	\limsup_{N\to\infty} \frac{1}{N^3 T} \log \P\big[ \EM_N \in K \big]
	\leq
	-\inf_{\mu\in K} \sup_{\h \in \Ccsp^{\infty}([0,\termts],\R)}\Big\{ \rateM(\mu,\h) - \int_{0}^{\termts} \d s \, \frac12 \Ip{ \mu, (\partial_x \h)^2 } \Big\}.
\end{align} 

Let us use the martingale method \cite{kipnis89} to prove \eqref{e.upbd.weak}.
Take any $ \h\in{\Ccsp}^{\infty}([0,\termts],\R) $ and apply It\^{o}'s calculus to $ N^3 T\ip{ \EM_N(s), \h(s) } =  N^2T \sum_{\ii} \h(s,X_\ii(Ts)/(NT)) $ with the aid of \eqref{e.aBP} to get
\begin{subequations}
\begin{align}
	\label{e.upbd.ito}
	N^3 T \ip{ \EM_N(s), \h(s) }\Big|_{0}^{s'}
	&-
	N^3 T \int_0^{s'} \d s \, \Ip{ \EM_N, \partial_s\h + \Sgn[\EM_N] (\partial_x\h) + \tfrac{1}{2N^2T}\partial_{xx}\h }
\\
	&=
	NT^{1/2} \sum_{\ii=1}^N \int_0^{s'} \d \bm_{\ii}(s)\,  (\partial_x \h)\big(s,\tfrac{1}{NT}X_\ii(Ts)\big).
\end{align}
\end{subequations}
This result implies that the expression in \eqref{e.upbd.ito}, when viewed as a process in $s'$, is a martingale with the quadratic variation $ N^3T\int_0^{s'} \d s\, \ip{\EM_N,(\partial_x\h)^2} $.
Hence $ \E[\exp(N^3 T U_N(\EM_N))]=1 $, where
\begin{align}
	U_N(\mu) := \rateM(\mu,\h) - \int_0^{\termt} \d s \, \frac{1}{2}\ip{\mu,(\partial_x\h)^2} - \frac{1}{2N^2T}\int_0^{\termt} \d s \, \ip{\mu,\partial_{xx}\h},
\end{align}
and $\rateM$ was defined in \eqref{e.rateM}.
Given any Borel $ A \subset \Csp([0,\termts],\Psp(\R)) $, write
\begin{align}
	\P[\EM_N\in A] = \E[ \ind_{A}(\EM_N) \exp(-N^3 T U_N(\EM_N))\exp(N^3 T U_N(\EM_N)) ],
\end{align}
bound the first two factors together by $ \sup_{\mu\in A} \exp(-N^3 T U_N(\mu)) $, use $ \E[\exp(N^3 T U_N(\EM_N))]=1 $ for the remaining factor, and  apply $ \limsup_{N} (N^3 T)^{-1} \log(\Cdot) $ to both sides of the result.
Doing so gives
\begin{subequations}
\begin{align}
	\limsup_{N\to\infty} \frac{1}{N^3 T} \log \P[ \EM_N \in A ]
	&\leq
	\lim_{N\to\infty} \sup_{\mu\in A} U_N(\mu)
\\
	&= 
	- \inf_{\mu \in A} \Big\{ 
		\rateM(\mu,\h) - \int\!\!{}_{[0,\termts]} \d s \, \frac{1}{2}\ip{\mu,(\partial_x\h)^2} 
	\Big\}.
\end{align}
\end{subequations}
Since this holds for all $ \h\in{\Ccsp}^\infty([0,\termts],\R) $, we further obtain
\begin{align}
	\label{e.upbd.swap}
	\limsup_{N\to\infty} \frac{1}{N^3 T} \log \P\big[ \EM_N \in A \big]
	&\leq
	- \sup_{\h\in \Ccsp^{\infty}([0,\termts],\R)} \inf_{\mu \in A} \Big\{ \rateM(\mu,\h) - \int_0^{\termt} \d s \, \frac{1}{2}\ip{\mu,(\partial_x\h)^2} \Big\}.
%\\
%	&= \inf_{\h\in \Ccsp^{\infty}([0,\termts],\R)} \sup_{\mu \in A} \Big\{ -\rateM(\mu,\h) + \int_0^{\termt} \d s \, \frac{1}{2}\ip{\mu,(\partial_x\h)^2} \Big\}.
\end{align}
We seek to swap the supremum and infimum on the right hand side.
To this end, apply Lemmata~3.2--3.3 in Appendix~2 in \cite{kipnis98}, with $ J_\h(\mu) = -\rateM(\mu,\h) + \int\!\!{}_{[0,\termts]} \d s \, \frac{1}{2}\ip{\mu,(\partial_x\h)^2} $.
Under our notation, these Lemmata assert that, if $J_\h(\mu)$ is upper-semicontinuous in $\mu$ for every $\h$, and if \eqref{e.upbd.swap} holds for every open $A$, then \eqref{e.upbd.weak} holds for every compact $ K$.
As shown after \eqref{e.ratesup.conti}, $ J_\h(\mu) $ is continuous in $ \mu $ for every $\h$.
We have proved \eqref{e.upbd.swap} for every Borel $A$.
Hence the desired weak upper bound \eqref{e.upbd.weak} follows.

\section{LDP for the attractive BPs: lower bound}
\label{s.lwbd}
We begin by setting up the goal of the proof.
As mentioned in Section~\ref{s.basic.m=1}, we consider $ \mm=1 $ only.
Indeed, proving the LDP lower bound amounts to proving that, for any $\mu\in\Csp([0,\termts],\Psp(\R))$ and $\e>0$,
$
	-\rate_\star(\mu)
	\leq 
	\liminf_{\e\to 0}\liminf_{N\to\infty} (N^3 T)^{-1}\log\P[\dist_{[0,\termts]}(\EM_N,\mu)<\e].
$ 
Recall from \eqref{e.rate.ic} that, when $\rate_\star(\mu)<\infty$, we have $\mu(0)=\mu_\Start$ and $\rate_\star(\mu)=\rateq(\mu)$.
Our goal is hence as follows.

\begin{prop}\label{p.ldp.lw}
For any $\mu\in\Csp([0,\termts],\Psp(\R))$ with $\mu(0)=\mu_\Start$ and $\rateq(\mu)<\infty$,
\begin{align}
	\label{e.lwbd.goal}
	\liminf_{r\to 0}\liminf_{N\to\infty} \frac{1}{N^3 T}\log\P\big[\dist_{[0,\termts]}(\EM_N,\mu)<r\big] \geq -\rateq(\mu).
\end{align} 
\end{prop}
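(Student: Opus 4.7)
The plan is to prove the lower bound by a Girsanov change-of-measure steered by the quantile representation \eqref{e.rateq}, with a preliminary approximation step that handles cluster singularities. Write $q(s,a) := \quant[\mu(s)](a)$; by hypothesis $\partial_s q \in \Lsp^2([0,\termts]\times[0,1])$. After rescaling, \eqref{e.aBP} becomes, for $\Y_\ii(s) := \X_\ii(Ts)/(NT)$,
\begin{align}
\label{e.lwbd.plan.Y}
\d\Y_\ii(s) = \Sgn[\EM_N(s)](\Y_\ii(s))\,\d s + (N\sqrt{T})^{-1}\,\d\til\bm_\ii(s),
\end{align}
where the $\til\bm_\ii$ are independent standard BMs. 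I aim to tilt the drift so that $\Y_\ii(s)\approx q(s,a_\ii)$ with $a_\ii := (\ii-\tfrac12)/N$; the necessary tilt $\phi_\ii(s) := N\sqrt{T}\big(\partial_s q(s,a_\ii) - \Sgn[\EM_N(s)](\Y_\ii(s))\big)$ carries a Girsanov cost whose deterministic part is the Riemann sum
\begin{align}
\label{e.lwbd.plan.cost}
\frac{N^2T}{2}\sum_{\ii=1}^N\int_0^\termt\big(\partial_s q(s,a_\ii) - \Sgn[\EM_N(s)](\Y_\ii(s))\big)^2\,\d s,
\end{align}
which on trajectories concentrated near $\mu$ converges, after division by $N^3T$, to $\rateq(\mu)$.

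The first step is to reduce to \emph{regular} deviations: those for which $q$ is jointly smooth in $(s,a)$, strictly increasing in $a$, and $\partial_s q$ is uniformly bounded, so that $\mu(s)$ has no atoms and $\Sgn[\mu(s)](\Cdot)$ is continuous. Given $\mu$ with $\rateq(\mu)<\infty$, I would construct a sequence $\mu^{(k)}$ of regular deviations with $\mu^{(k)}(0)\Rightarrow\mu_\Start$, $\mu^{(k)}\to\mu$ in $\Csp([0,\termts],\Psp(\R))$, and $\limsup_k \rateq(\mu^{(k)}) \leq \rateq(\mu)$. A concrete recipe is $q^{(k)}(s,a) := q(s,a) + k^{-1}\psi(a)\chi(s)$ with $\psi$ smooth strictly increasing and $\chi$ smooth with $\chi(0)=0$ (to preserve the starting condition), followed by a joint $(s,a)$ mollification. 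By the triangle inequality $\dist_{[0,\termts]}(\EM_N,\mu) \leq \dist_{[0,\termts]}(\EM_N,\mu^{(k)}) + \dist_{[0,\termts]}(\mu^{(k)},\mu)$, so it suffices to prove \eqref{e.lwbd.goal} for each regular $\mu^{(k)}$ and then send $k\to\infty$.

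For regular $\mu$, I would execute the tilt explicitly: $\phi_\ii$ is bounded (by $|\Sgn|\leq 1/2$ and the boundedness of $\partial_s q$), so Novikov's condition gives a bona fide tilted law $\law_N$ on $[0,N\termt]$ under which each $\Y_\ii$ satisfies $\d\Y_\ii = \partial_s q(s,a_\ii)\,\d s + (N\sqrt{T})^{-1}\d\til\bm_\ii^{(\law_N)}$ with $\til\bm_\ii^{(\law_N)}$ i.i.d.\ BMs under $\law_N$. A maximal inequality using $(N^2T)^{-1}\to 0$ from \eqref{e.scaling}, combined with an equicontinuity estimate parallel to \eqref{e.aBP.control}, yields $\law_N[\dist_{[0,\termts]}(\EM_N,\mu) < \e] \to 1$. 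The Donsker--Varadhan entropy lower bound then gives
\begin{align}
\log\P[A] \geq \log\law_N[A] - \law_N[A]^{-1}\,\EE^{\law_N}\big[\log(\d\law_N/\d\P)\cdot\ind_A\big],
\end{align}
applied to $A := \{\dist_{[0,\termts]}(\EM_N,\mu)<\e\}$; combined with the $\law_N$-convergence of \eqref{e.lwbd.plan.cost} divided by $N^3T$ to $\rateq(\mu)$, this produces \eqref{e.lwbd.goal}.

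The main obstacle is the cluster approximation. The key ingredient is the identities \eqref{e.Sgn.id}--\eqref{e.Sgn.id.int}: $\Sgn[\mu](q(s,a))$ is the average of $\tfrac12-a'$ over the flat interval of $q(s,\Cdot)$ containing $a$, so replacing each plateau by a thin increasing profile through the perturbation $k^{-1}\psi$ changes $\rateq$ by $O(k^{-1})$ and correspondingly changes the $\Sgn$ term in \eqref{e.lwbd.plan.cost} by $O(k^{-1})$ after integration in $a$. Propagating this quantitative control through the concentration and Girsanov computations of the previous paragraph, uniformly in $k$, is what ultimately allows the lower bound to match $\rateq(\mu)$ in the limit.
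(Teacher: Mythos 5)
Your scheme has a genuine gap at the cluster-approximation step, and it is precisely the difficulty the paper flags as the reason the clustering regime differs from the diffusive one. The claim that the regularization $q^{(k)}:=q+k^{-1}\psi\chi$ (followed by mollification) changes $\rateq$ by $O(k^{-1})$ is false whenever $\mu(s)$ carries atoms, and the error is $O(1)$, not $o(1)$. Once $\mu^{(k)}(s)$ is non-atomic, the identity \eqref{e.Sgn.id} gives $\Sgn[\mu^{(k)}(s)](\quant[\mu^{(k)}(s)](a))=\tfrac12-a$ exactly, so
\begin{align*}
\rateq(\mu^{(k)})
=\int_0^{\termt}\!\!\d s\int_0^1\!\!\d a\,\tfrac12\big(\partial_s q^{(k)}-(\tfrac12-a)\big)^2
\ \xrightarrow{k\to\infty}\
\int_0^{\termt}\!\!\d s\int_0^1\!\!\d a\,\tfrac12\big(\partial_s q-(\tfrac12-a)\big)^2 ,
\end{align*}
whereas $\rateq(\mu)$ has $\Sgn[\mu(s)](\quant[\mu(s)](a))$ in place of $\tfrac12-a$. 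The cross terms agree by \eqref{e.Sgn.id.int} (since $\partial_s q$ is plateau-constant), but the quadratic terms do not: on a plateau $(a_-,a_+)$, $\Sgn[\mu]^2$ equals the \emph{square of the average} of $\tfrac12-a'$ while $(\tfrac12-a)^2$ contributes the \emph{average of the square}, and Jensen's inequality is strict. The discrepancy is exactly $\int_0^{\termt}\d s\,\tfrac{1}{24}\sum_x\ip{\mu(s),\ind_{\{x\}}}^3$, the same quantity that appears in \eqref{e.mom.rep}. For the simplest example $\mu(s)\equiv\delta_0$ one has $\rateq(\mu)=0$ but $\rateq(\mu^{(k)})\to\termt/24>0$. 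So the lower bound you would obtain for $\mu$ by passing $k\to\infty$ is $-\rateq(\mu)-\frac{1}{24}\int\d s\sum_x\ip{\mu,\ind_{\{x\}}}^3$, which is strictly weaker than \eqref{e.lwbd.goal}. This is consistent with lower semicontinuity of $\rateq$ but incompatible with the $\limsup_k\rateq(\mu^{(k)})\le\rateq(\mu)$ your argument needs.

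The root cause is in the choice of tilt, not only in the approximation. Your drift $\phi_\ii$ cancels $\Sgn[\EM_N]$ entirely and replaces it by the deterministic $\partial_s q(s,a_\ii)$, so the Girsanov cost charges for undoing the attractive interaction. But the interaction is not an obstacle to be removed; it is what keeps a cluster together for free. In \eqref{e.rateq} the interaction is credited, via the term $\Sgn[\mu](\quant[\mu])$, and at an atom this term is the plateau-\emph{average} $\tfrac12-\tfrac{a_-+a_+}{2}$, not the pointwise $\tfrac12-a$. The empirical measure $\EM_N$, however, is always spread at scale $1/N$, so $\Sgn[\EM_N](\Y_\ii)\approx\tfrac12-\ii/N$ under your tilted law, and your cost converges to the wrong functional even if one skips the regularization step. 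The paper's proof is built precisely to sidestep this: Proposition~\ref{p.lwbd.clustering} tilts only by the constant cluster velocities $v_\cc=\dot\xi_\cc-\pull_\cc$ (leaving the interaction drift intact), uses the Sarantsev comparison theorem to show that the attraction then confines particles to their assigned cluster at no extra cost, and approximates a general $\mu$ by PL-\emph{clustering} deviations (which keep the atoms) rather than by smooth ones. To salvage your approach you would at minimum have to replace the regularization by a cluster-preserving one and replace the cancellation tilt by a cluster-level tilt; at that point you essentially recover the paper's argument.
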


\subsection{Proving a preliminary version of Proposition~\ref{p.ldp.lw}}
\label{s.lwbd.plclustering}
Let us introduce some classes of deviations.
We call a deviation $\xi\in\Csp([0,\termts],\Psp(\R))$ \textbf{clustering} if it is of the form $\xi(s)=\sum_{\cc=1}^n \mm_\cc\delta_{\xi_{\cc}(s)} $, for some $\xi_1\leq\ldots\leq\xi_{n}\in\Csp[0,\termts]$ and some $\mm_1,\ldots,\mm_n\in(0,1]$ that add up to $\mm=1$.
We call $\xi_1,\ldots,\xi_n$ the \textbf{clusters} or the \textbf{trajectories of clusters}.
We call a deviation \textbf{Piecewise-Linear(PL)-clustering} if it is clustering and the trajectories of its clusters are piecewise linear.

As a first step toward proving Proposition~\ref{p.ldp.lw}, here we prove a preliminary version of it where $\mu$ is replaced by a PL-clustering deviation $\xi$.
For a PL-clustering deviation, we will use a different way (than $\dist_{[0,\termts]}$) to measure how close the empirical measure is to the deviation.
Recall that the attractive BPs are ordered at the start: $\X_{1}(0)\leq\ldots\leq\X_{N}(0)$.
For a given PL-clustering $\xi$, to each $\X_\ii$ assign a cluster $\xi_\cc$ via the ordering of the \emph{indices}: 
\begin{align}
	\label{e.assignment}
	\cc_{N,\xi}(\ii):=\min\{ \cc : \mm_1+\ldots+\mm_\cc \geq i/N \},
	&&
	\group_{N,\xi}(\cc) := \cc_{N,\xi}^{-1}(\{\cc\}) = \{ \ii : \cc_{N,\xi}(\ii)=\cc\}.
\end{align}
For $\vec{x}\in(\Csp[0,T\termts])^{N}$, set 
\begin{align}
	\DIST_{N,s}(\vec{x},\xi)&:=\max\big\{ |\tfrac{1}{NT}x_{\ii}(Ts)-\xi_{\cc_{N,\xi}(\ii)}(s)|:\ii=1,\ldots,N \big\},
\\
	\DIST_{N,[s,',s'']}(\vec{x},\xi)&:=\sup_{s\in[s',s'']} \DIST_{N,s}(\vec{x},\xi).
\end{align}
We measure how close the system of attractive BPs is to $\xi$ by $\DIST_{N,s}(\vec{\X},\xi)$, which is a (much) finer measurement than $\dist(\EM_N(s),\xi(s))$.
By \eqref{e.wass.coupling},
\begin{align}
	\label{e.dist<DIST}
	\dist(\EM_N(s),\xi(s)) \leq \DIST_{N,s}\big(\vec{\X},\xi\big).
\end{align}
Hereafter, $c=c(v_1,v_2,\ldots)$ denotes a constant that depends only on the designated variables $v_1,v_2,\ldots$.
We seek to prove the following preliminary version of Proposition~\ref{p.ldp.lw}.
\begin{prop}\label{p.lwbd.clustering}
Given any PL-clustering $\xi$, there exists a $c=c(\xi)\in[1,\infty)$ such that
\begin{align}
\label{e.lwbd.clustering}
	\liminf_{r \to 0}
	\liminf_{N\to\infty}
	&\inf_{	\DIST_{N,0}(\vec{\X},\xi)\leq r}
	\Big\{
		\frac{1}{N^3T} \log 
		\P\big[ \DIST_{N,[0,\termts]}(\vec{\X},\xi) \leq c r \, \big| \, \vec{\X}(0) \big]
	\Big\}
	\geq
	-\rateq(\mu).
\end{align}
\end{prop}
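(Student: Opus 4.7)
The plan is to prove \eqref{e.lwbd.clustering} via a Girsanov tilting argument (the $\rateq(\mu)$ in the statement is evidently a typo for $\rateq(\xi)$, which is what I aim for). For each particle $\X_\ii$ with assigned cluster $\cc:=\cc_{N,\xi}(\ii)$, add the deterministic drift
\[
\theta_\ii(s) := N\bigl(\dot\xi_\cc(s/T) - \Sgn[\xi(s/T)](\xi_\cc(s/T))\bigr),\qquad s\in[0,T\termt],
\]
to the SDE \eqref{e.aBP} of $\X_\ii$, and call the resulting law $\law$. Under $\law$, the net drift on $\X_\ii$ reads $N\dot\xi_\cc(s/T)+\bigl(\tfrac12\sum_\jj\sgn(\X_\jj-\X_\ii) - N\Sgn[\xi(s/T)](\xi_\cc(s/T))\bigr)$, whose bracketed correction is negligible whenever the empirical measure stays close to $\xi(s/T)$, so each $\X_\ii(Ts)/(NT)$ should follow $\xi_{\cc_{N,\xi}(\ii)}(s)$.

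By Girsanov's theorem, $\d\E/\d\law = \exp(-M_N - \tfrac12\sum_\ii\int_0^{T\termt}\theta_\ii^2\,\d s)$ where $M_N$ is a $\law$-martingale. Crucially, $\theta_\ii$ is deterministic (depending only on $\xi$), so the quadratic variation is deterministic; after a change of variable $s\mapsto Ts$ and using $|\group_{N,\xi}(\cc)|/N\to\mm_\cc$ together with the piecewise-constant (in $a$) form of $\quant[\xi(s)]$, one obtains
\[
\tfrac12\sum_\ii\int_0^{T\termt}\theta_\ii(s)^2\,\d s = N^3T\sum_{\cc=1}^n\mm_\cc\int_0^\termt\tfrac12\bigl(\dot\xi_\cc - \Sgn[\xi](\xi_\cc)\bigr)^2\,\d s + o(N^3T) = N^3T\,\rateq(\xi) + o(N^3T).
\]
Moreover $M_N$ is centered Gaussian under $\law$ with variance $\sum_\ii\int\theta_\ii^2\,\d s = O(N^3T)$, hence for any $\delta>0$, $|M_N|\le\delta N^3T$ with $\law$-probability tending to $1$.

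The key remaining step is to show $\law[\DIST_{N,[0,\termts]}(\vec\X,\xi)\le cr\mid\vec\X(0)]\to 1$ for some $c=c(\xi)$, uniformly over $\vec\X(0)$ with $\DIST_{N,0}(\vec\X,\xi)\le r$. Within each cluster $\cc$, the intra-cluster attractive drift confines the $N\mm_\cc$ members with dispersion of order $1/\sqrt{N^2T}$ in rescaled coordinates, which vanishes under \eqref{e.scaling}; between well-separated clusters, BM-scale fluctuations cannot cause label crossings once $r$ is smaller than the minimal cluster gap. A Gronwall-type estimate carried out segment-by-segment along the linear pieces of the $\xi_\cc$, together with the maximal inequality of Section~\ref{s.upbd.exptight} applied to $\sum_\ii\bm_\ii$, should yield the required control. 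Combining the three ingredients,
\[
\P\bigl[\DIST_{N,[0,\termts]}\le cr\bigm|\vec\X(0)\bigr] \ge \law\bigl[\{\DIST_{N,[0,\termts]}\le cr\}\cap\{|M_N|\le\delta N^3T\}\bigr]\,\exp\bigl(-N^3T\,\rateq(\xi) - (\delta+o(1))N^3T\bigr),
\]
and taking $\liminf_{N\to\infty}\frac{1}{N^3T}\log$ followed by $\delta\to 0$ gives \eqref{e.lwbd.clustering}.

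The main obstacle is the concentration estimate, particularly at the finitely many merger times of the $\xi_\cc$: there, $\dot\xi_\cc$ and $\Sgn[\xi](\xi_\cc)$ jump, two previously distinct groups of particles suddenly coexist in the merged cluster, and the assignment $\cc_{N,\xi}(\ii)$ becomes irrelevant within the merged indices but must not cross to a non-merged neighbor. The conservation of momentum built into the PL structure keeps the \emph{macroscopic} drift on each cluster continuous across mergers; what needs work is a uniform \emph{microscopic} control that particles tagged with different pre-merger labels remain within $cr$ of the common position despite the discontinuous drift, a control I expect to follow from the $\Theta(N)$ strength of the internal attraction dominating the BM input under $N^2T\to\infty$.
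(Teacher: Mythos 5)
Your high-level strategy — tilt by Girsanov so each $\X_\ii$ drifts along its assigned cluster, price the tilt at $\rateq(\xi)$, and show the tilted process stays confined — is the same as the paper's, and your reading that the $\rateq(\mu)$ in the statement should be $\rateq(\xi)$ is correct. The Girsanov cost computation is also right. But there are two genuine gaps where the proposal relies on tools that either don't apply or aren't spelled out, and these are exactly where the work of the proof lives.

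First, the confinement estimate $\law[\DIST_{N,[0,\termts]}(\vec\X,\xi)\le cr]\to 1$. You invoke ``a Gronwall-type estimate'' to show the intra-cluster attraction keeps particles close to their cluster's center of mass. Gronwall is not the right tool here: the drift $\tfrac12\sum_\jj\sgn(\X_\jj-\X_\ii)$ is neither Lipschitz nor continuous, and the intra-cluster force depends on the stochastic rank ordering. What the paper actually uses is a pathwise comparison theorem for gap processes of rank-based diffusions (Thm.~3.1 of \cite{sarantsev2019camparison}): the attractive intra-cluster drift can only shrink the gaps relative to the driftless process, so the dispersion is bounded by the initial dispersion plus BM fluctuations. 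This is a qualitatively different mechanism from a contraction estimate, and without it the claim that the cluster stays within $O(r)$ has no proof. (Also, your dispersion-of-order-$1/\sqrt{N^2T}$ picture isn't what holds; the bound that actually emerges is $O(r)$ plus a BM increment controlled by an auxiliary parameter $r'$ that is sent to $0$ after $N\to\infty$.)

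Second, the merger times, which you rightly identify as the main obstacle but do not resolve. Your global tilt $\theta_\ii$ has jump discontinuities at the finitely many merger times of $\xi$, and near a merger the inter-cluster pulling is no longer exactly $N\pull_\cc$: particles assigned to two about-to-merge clusters can interleave, and the deterministic correction $-\Sgn[\xi](\xi_\cc)$ no longer cancels the actual inter-group drift. ``Conservation of momentum keeps the macroscopic drift continuous'' does not touch this microscopic issue, and neither does the remark that the $\Theta(N)$ attraction dominates the BM input. The paper handles it structurally: it splits $[0,\termt]$ into linear segments (where the Girsanov argument with the comparison theorem gives the sharp cost $\rateq_{[s',s'']}(\xi)$, protected by a stopping time that guarantees strict inter-cluster ordering throughout) and short transition segments $[s_j-\delta,s_j+\delta]$ around each merger, where a crude but cost-negligible bound is used, with $\delta\to 0$ as $r\to 0$. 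This segment-by-segment concatenation is also what produces the constant $c(\xi)=3^{2\ell}$ in the statement; your proposal has no mechanism generating a $c$. Until these two ingredients are supplied, the proposal is an outline of the right strategy rather than a proof.
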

\noindent{}
In \eqref{e.lwbd.clustering} and similarly hereafter, the conditional probability $\P[\ldots|\vec{\X}(0)]$ is viewed as a function of $\vec{\X}(0)$, and the infimum is taken over those $\vec{\X}(0)$s that satisfy $\DIST_{N,0}(\vec{\X},\xi) \leq r$.
\begin{proof}
We begin by setting up the notation.
Let $0=s_0<s_1<\ldots<s_\ell=\termt$ be the time between which the clusters of $\xi$ are linear.
For a small $\delta>0$ to be fixed shortly, we call $[s_0,s_1-\delta]$, $[s_1+\delta,s_2-\delta]$, \ldots, $[s_{\ell-1}+\delta,s_{\ell}]$ the \textbf{linear segments}, and call $[s_1-\delta,s_1+\delta]$, $[s_2-\delta,s_2+\delta]$, \ldots, the \textbf{transition segments}.
To fix the value of $\delta$, consider the maximum speed of all clusters $\dot\xi_\mathrm{max} := \sup_{[0,\termt],\cc} |\dot{\xi}_\cc|$, and note that, by the definition of the $s_j$s, within each $(s_{j-1},s_j)$, the clusters of $\xi$ either never meet or completely coincide.
We fix a small enough $\delta$ such that
\begin{align}
	\label{e.delta.small}
	&4\delta \dot\xi_\mathrm{max} \leq 2\cdot 3^{0}\, r, \text{ and}
\\
	\label{e.linears.order}
	\begin{split}
	&\text{for every linear segment, either }
	|\xi_\cc-\xi_{\cc'}| > 2\cdot 3^{2\ell} r \text{ everywhere on that linear segment, or}
	\\
	&
	\hphantom{\text{for every linear segment, either }}
	|\xi_\cc-\xi_{\cc'}| =0 \text{ everywhere on that linear segment.}
	\end{split}
\end{align}
%Within each linear or transition segment $[s',s'']$, we seek to prove a bound of the form:
%\begin{align*}
%	\liminf_{N\to\infty}
%	\inf_{\DIST_{N,s'}(\vec{\X},\xi)\leq \text{error}}\Big\{
%		\frac{1}{N^3 T} \log \P\big[\,\text{controlling event}[s',s'']\,\big|\, \vec{\X}(Ts') \,\big]
%	\Big\}
%	\geq
%	- \text{cost}[s',s''].
%\end{align*}
%The precise version of this inequality will be stated in \eqref{e.lwbd.linear} and \eqref{e.lwbd.transition}.
%The controlling event ensures that $\DIST_{N,[s',s'']}(\vec{\X},\xi)$ is small.
%To facilitate the control, we will choose a suitable error and require $\DIST_{N,s'}(\vec{\X},\xi)\leq\text{error}$.
%We will construct these events in such a way that, for any consecutive segments, the controlling event of the former implies the required error bound of the latter.
%This way the results within the segments can be concatenated.
%Within a linear segment, we will take the optimal cost.
%Within a transition segment, we will take a cost that, despite being suboptimal, is negligible as $\delta\to 0$. 
%
%Let us carry out the analysis within the linear segments and transition segments separately.

\medskip
\noindent\textbf{Step 1: analysis within a linear segment.}
Let us set up the notation and state the goal of this step.
Fix a linear segment $[s',s'']$ and define the event
\begin{align}
	\label{e.controlling}
	\control_{N,\beta}[s',s'']
	:=
	\big\{ \DIST_{N,[s',s'']}(\vec{\X},\xi) < 3^{\beta} r \big\}.
\end{align}
Hereafter, $\beta\in\{0,1,\ldots,2\ell-2\}$ is an auxiliary parameter. 
Let $\rateq_{[s',s'']}$ be the analog of $\rateq$ where the time integral (see \eqref{e.rateq}) is restricted to $[s',s'']$ and let $\dot{\varphi}:=\d\varphi/\d s$.
It is readily checked that
\begin{align}
	\label{e.clustering.rateq}
	\text{for a clustering } \nu = \sum_{\cc=1}^{n} \mm_\cc \delta_{\nu_\cc},
	\qquad
	\rateq_{[s',s'']}(\nu)
	=
	\int_{s'}^{s''} \d s\, \sum_{\cc=1}^{n} \frac{\mm_\cc}{2} \big( \dot{\nu_\cc} - \Sgn[\nu](\nu_\cc) \big)^2.
\end{align}
The goal of this step is to prove that
\begin{align}
	\label{e.lwbd.linear}
	\liminf_{N\to\infty}
	\inf_{ \DIST_{N,s'}(\vec{\X},\xi)\leq 3^{\beta} r }
	\Big\{
		\frac{1}{N^3 T} \log \P\big[ \,\control_{N,\beta+1}[s',s'']\,\big|\, \vec{\X}(Ts')\,\big]
	\Big\}
	\geq
	- \rateq_{[s',s'']}(\xi).
\end{align}
Recall from \eqref{e.linears.order} that, within $[s',s'']$, any pair of clusters either stay strictly apart or completely coincide.
After combining those clusters that completely coincide, we have
\begin{align}
	\label{e.linears.order.}
	|\xi_\cc-\xi_{\cc'}|_{[s',s'']} > 2\cdot 3^{2\ell} r,
	\qquad
	\forall \cc\neq \cc'.
\end{align}

The first step of proving \eqref{e.lwbd.linear} is to set up Girsanov's transform.
Within $[s',s'']$, the cluster $\xi_\cc$ travels at a constant velocity $\d\xi_\cc/\d s :=\dot{\xi}_\cc $.
Letting $v_\cc := \dot{\xi}_\cc - \pull_\cc$, where $\pull_\cc:=\tfrac12(\ldots-\mm_{\cc-1}+\mm_{\cc+1}+\ldots)$, we seek to apply Girsanov's transform to turn $\P$ into another law $\law$ where 
\begin{align}
	\label{e.lwbd.linear.SDE}
	\text{under } \law,
	\qquad
	\d \X_\ii = \sum_{\jj=1}^{N} \frac{1}{2} \sgn(\X_\jj-\X_\ii) \d s + N v_{\cc_{N,\xi}(\ii)} \d s + \d\bm_{\ii}.&
\end{align}
In plain words, we pursue a ``strategy'' where each $\X_{\ii}$ receives an additional drift $Nv_{\cc_{N,\xi}(\ii)}$.
The term $\dot{\xi}_\cc$ in $v_\cc$ helps $\X_{\ii}(T\Cdot)/(NT)$ follow the cluster $\xi_\cc$, while the term $-\pull_\cc$ counters the ``pulling'' from those particles with a different assigned cluster, namely the effect of the drift $\sum_{\jj\notin\group_{N,\xi}(\cc)}\sgn(\X_{\jj}-\X_{\ii})/2$.
Hereafter, we will use the phrase ``pulling'' similarly, to refer to the effect of the drift coming from a set of particles.

The next step is to set up a stopping time and prepare the relevant properties.
Let $\sigma$ be the first time when the condition required by $\control_{N,\beta+1}[s',s'']$ is violated, namely 
\begin{align}
	\sigma
	:=
	\inf\big\{ s\in[s',s'']: |\tfrac{1}{NT}\X_{\ii}(Ts)-\xi_{\cc_{N,\xi}(\ii)}(s)|= 3^{\beta+1}r, \text{for some } \ii \big\}.
\end{align}
Since $\beta<2\ell$, by \eqref{e.linears.order.}, particles with different assigned clusters stay strictly ordered within $[s',\sigma]$.
This property implies, for any $\ii\in\group_{N,\xi}(\cc)$, the pulling from particles outside $\group_{N,\xi}(\cc)$ is given by $\sum_{\jj\notin\group_{N,\xi}(\cc)}\frac{1}{2} \sgn(\jj-\ii)$.
The last expression sums to $N\frac12(-\mm_1\ldots-\mm_{\cc-1}+\mm_{\cc+1}+\ldots+\mm_n)=N\pull_\cc$.
Hence, within $s\in[s',\sigma]$ and for each $\ii\in\group_{N,\xi}(\cc)$,
\begin{align}
	\label{e.lwbd.step2.Pull}
	\sum_{\jj=1}^{N} \frac{1}{2} \sgn\big(\X_{\jj}(Ts)-\X_{\ii}(Ts)\big)
	=
	\sum_{\jj\in\group_{N,\xi}(\cc)} \frac{1}{2} \sgn\big(\X_{\jj}(Ts)-\X_{\ii}(Ts)\big) 
	+ 
	N\phi_\cc.
\end{align}

Having set up the stopping time $\sigma$, we next analyze the evolution of the particles within $s\in[s',\sigma]$ and under the condition 
$
	\DIST_{N,s'}(\vec{\X},\xi)\leq 3^{\beta} r,
$
which is required in \eqref{e.lwbd.linear}.
We will analyze separately the evolution of the center of mass (defined later) relative to their assigned clusters and the evolution of a particle relative to its center of mass.
Hereafter, unless otherwise noted, $s\in[s',\sigma]$.

We now define the center of mass and analyze its evolution.
Let $\X_{\cc}:=\sum_{\ii\in \group_{N,\xi}(\cc)}\X_{\ii}/(N\mm_\cc)$ be the center of mass.
Take the average of \eqref{e.lwbd.linear.SDE} over $\ii\in\group_{N,\xi}(\cc)$. 
The pulling from within $\ii\in\group_{N,\xi}(\cc)$ averages to zero; the pulling from outside of $\group_{N,\xi}(\cc)$ averages to $N\pull_\cc$ by \eqref{e.lwbd.step2.Pull}.
Hence $\d X_\cc = N\dot{\xi}_\cc \d s + \d \bm_\cc$, where $\bm_\cc:=\sum_{\ii\in \group_{N,\xi}(\cc)}\bm_{\ii}/(N\mm_\cc)$.
Integrating this equation gives 
\begin{align}
	|\tfrac{1}{NT}\X_\cc(Ts)-\xi_\cc(s)| \leq |\tfrac{1}{NT}\X_{\cc}(Ts')-\xi_\cc(s')| + \tfrac{1}{NT}|\bm_\cc(Ts)-\bm_\cc(Ts')|.
\end{align}
Under the condition
$
	\DIST_{N,s'}(\vec{\X},\xi)\leq 3^{\beta} r,
$
the first term on the right hand side is at most $3^{\beta} r$, so 
\begin{align}
\label{e.cm}
\begin{split}
	|\tfrac{1}{NT}\X_\cc(Ts)-\xi_\cc(s)| 
	&\leq 
	3^{\beta}r + \tfrac{1}{NT}|\bm_\cc(Ts)-\bm_\cc(Ts')|
\\
	&
	\leq
	3^{\beta}r + \tfrac{1}{NT}\max\big\{ |\bm_{\ii}(Ts)-\bm_{\ii}(Ts')| : \ii=1,\ldots,N \big\}.
\end{split}
\end{align}

We next analyze the motion of a particle relative to its center of mass.
Fix any $\cc$.
As seen in \eqref{e.lwbd.step2.Pull}, a particle with $\ii\in\group_{N,\xi}(\cc)$ feels the pulling from within and outside of $\group_{N,\xi}(\cc)$.
Intuitively, the former should only pull particles in $\group_{N,\xi}(\cc)$ closer together. 
To make this statement precise, we appeal to a comparison result from \cite{sarantsev2019camparison}.
Write $\group_{N,\xi}(\cc)=\{\ii_1,\ldots,\ii_{2}\}$, rank $\{\X_\ii(Ts)\}_{\ii\in\group_{N,\xi}}$ to get $\X_{(\ii_1)}(Ts)\leq\ldots\leq\X_{(\ii_2)}(Ts)$, let $Z_{\ii}(Ts):=\X_{(\ii+1)}(Ts)-\X_{(\ii)}(Ts)$ be the gap process, and let $L_{\ii,\ii+1}$ denote the localtime of $Z_{\ii}$ at $0$.
By \cite[Proposition~3]{ichiba2013strong}, the ranked processes satisfy
\begin{align}
	\label{e.Xi}
	\d \X_{(\ii)} 
	&= 
	\Big( N\pull_\cc + Nv_{\cc} + \sum_{\jj\in\group_{N,\xi}(\cc)} \frac{1}{2}\sgn(\jj-\ii) \Big) \d s
	+
	\d W_{\ii} + \tfrac{1}{2}\d L_{\ii-1,\ii} - \tfrac{1}{2}\d L_{\ii,\ii+1},
\end{align}
where $W_{\ii_1},\ldots,W_{\ii_2}$ are independent standard BMs, and we assume $W_{\ii}(s')=0$ without loss of generality.
Next, consider the analog of $\X_{(\ii)}$ without the inner pulling.
Namely, for $\ii\in\group_{N,\xi}(\cc)$, consider
\begin{align}
	\label{e.barXi}
	\d \bar{\X}_{(\ii)} = N\big( \pull_\cc + v_{\cc} \big) \d s 
	+ \d W_{\ii} + \tfrac{1}{2}\d \bar{L}_{\ii-1,\ii} - \tfrac{1}{2}\d \bar{L}_{\ii,\ii+1},
	\quad
	\bar{\X}_{\ii}(Ts')=\X_{(\ii)}(Ts'),
\end{align}
where $\bar{L}_{\ii,\ii+1}$ is the localtime of $\bar{\X}_{(\ii+1)}-\bar{\X}_{(\ii)}$ at $0$.
%The system \eqref{e.barXi} has as unique strong solution.
%This, for example, follows from the results in \cite[Section~2.1]{karatzas2016systems}.
Let $\bar{Z}_{i}:=\bar{\X}_{(\ii+1)}-\bar{\X}_{(\ii+1)}$ denote the gap process and let $f_i(s):= W_{\ii+1}(s) - W_{\ii}(s) + \X_{(\ii)}(Ts')$.
From \eqref{e.Xi}--\eqref{e.barXi}, we infer the equations of the gap processes
\begin{align}
	\label{e.Zi}
	\d Z_{(\ii)} 
	&= 
	\d f_{\ii} - \d s - \tfrac{1}{2}\d L_{\ii-1,\ii} + \d L_{\ii,\ii+1} - \tfrac{1}{2}\d L_{\ii+1,\ii+2},
\\
	\label{e.barZi}
	\d \bar{Z}_{(\ii)} 
	&= 
	\d f_{\ii} - \tfrac{1}{2}\d L_{\ii-1,\ii} + \d L_{\ii,\ii+1} - \tfrac{1}{2}\d L_{\ii+1,\ii+2},
\end{align}
for $\ii=\ii_1,\ldots,\ii_2-1$, with the convention that $L_{\ii_i-1,\ii_i}:=0$ and $L_{\ii_2,\ii_{2}+1}:=0$.
Now apply \cite[Theorem~3.1]{sarantsev2019camparison} to our $Z=(Z_{\ii})_{\ii\in[\ii_1,\ii_2)}$ and $\bar{Z}=(\bar{Z}_{\ii})_{\ii\in[\ii_1,\ii_2)}$, with $f_{\ii}(s)-(s-s')$ being the driving function for $Z_{\ii}$ and with $f_{\ii}$ being the driving function for $\bar{Z}_{\ii}$.
The result gives $Z_{\ii}(Ts)\leq\bar{Z}_{\ii}(Ts)$ for all $\ii\in[\ii_1,\ii_2-1)$ and $s\in[s',\sigma]$.
Further, letting $\bar{\X}_{\cc}:=\sum_{\ii\in\group_{N,\xi}(\cc)}\bar{\X}_{\ii}/|\group_{N,\xi}(\cc)|$ denote the center of mass of $(\bar{\X}_{(\ii)})_{\ii\in\group_{N,\xi}(\cc)}$ and averaging \eqref{e.barXi} over $\ii\in\group_{N,\xi}(\cc)$, we see that $\X_{\cc}=\bar{\X}_{\cc}$.
This property, together with the comparison result of the gap processes, gives
\begin{align}
\label{e.comparison}
	\max\big\{ |\X_{\ii}-\X_{\cc}| : \ii\in\group_{N,\xi}(\cc) \big\} 
	\leq 
	\max\big\{ |\bar{\X}_{(\ii)} - \bar{\X}_{\cc} | : \ii\in\group_{N,\xi}(\cc) \big\}.
\end{align}
Next, let $\bar{\bm}_{\ii}$, for $\ii\in\group_{N,\xi}(\cc)$, denote independent standard BMs, and consider
\begin{align}
	\label{e.Yi}
	\Y_{\ii}(Ts) := \bar{\bm}_{\ii}(Ts) - \bar{\bm}_{\ii}(Ts') + N(\pull_\cc + v_{\cc}) (Ts-Ts') + \X_{\ii}(Ts'),
\end{align}
for $s\in[s',\sigma]$ and $\ii\in\group_{N,\xi}(\cc)$.
If we rank $\{\Y_{\ii} \}_{\ii\in\group_{N,\xi}(\cc)}$, the resulting process has the same law as $(\bar{\X}_{(\ii)})_{\ii\in\group_{N,\xi}(\cc)}$.
This property can be checked from \eqref{e.barXi} by using the generators for example, which we omit.
Given this property, defining the event
\begin{align}
	\label{e.event.calF}
	\calF_N(r') := \big\{  |\X_{\ii}(Ts)-\X_{\cc}(Ts)| < NT(r'+3^{\beta} r), \ \forall\ii\in\group_{N,\xi}(\cc), \forall s\in[s',\sigma] \big\},
\end{align}
and letting $\Y_\cc:=\sum_{\ii\in\group_{N,\xi}(\cc)}\Y_{\ii}/|\group_{N,\xi}(\cc)|$ denote the center of mass of $(\Y_{\ii})_{\ii\in\group_{N,\xi}(\cc)}$, we have
\begin{align}
\label{e.comparison...}
\begin{split}
	\law&\big[\calF_N(r') \big|  \vec{\X}(Ts') \big]
\\
	&
	\geq
	\law\Big[ |\Y_{\ii}(Ts)-\Y_{\cc}(Ts)| < NT(r'+3^{\beta} r),\ \forall\ii\in\group_{N,\xi}(\cc), \forall s\in[s',\sigma]  \Big].
\end{split}
\end{align}
Next, we seek to bound $|\Y_{\ii}(Ts)-\Y_{\cc}(Ts)|$.
Recall the expression of $\Y_{\ii}(Ts)$ for \eqref{e.Yi}.
Averaging \eqref{e.Yi} over $\ii\in\group_{N,\xi}(\cc)$ gives the expression 
$
	\Y_{\cc}(Ts)
	=
	(\group_{N,\xi}(\cc))^{-1} \sum_{\jj\in\group_{N,\xi}(\cc)}(\bar{\bm}_{\jj}(Ts) - \bar{\bm}_{\jj}(Ts')) 
	+ 
	N(\pull_\cc + v_{\cc}) (Ts-Ts') 
	+ 
	\X_{\cc}(Ts').
$
Inserting these expressions into $|\Y_{\ii}(Ts)-\Y_{\cc}(Ts)|$, and bound the result using the triangle inequality.
Doing so gives
\begin{align}
	\label{e.Yibound}
	|\Y_{\ii}(Ts)-\Y_{\cc}(Ts)|
	\leq
	2\max_{\jj\in\group_{N,\xi}(\cc)} |\bar{\bm}_{\jj}(Ts)-\bar{\bm}_{\jj}(Ts')|
	+
	|\X_{\ii}(Ts')-\X_{\cc}(Ts'))|.
\end{align}
Under the condition $\DIST_{N,s'}(\vec{\X},\xi)\leq 3^{\beta} r$, the last term in \eqref{e.Yibound} is bounded by $NT\,3^{\beta} r$.
Hence, under the condition $\DIST_{N,s'}(\vec{\X},\xi)\leq 3^{\beta} r$, 
\begin{align}
\label{e.comparison..}
	\max_{\ii\in\group_{N,\xi}(\cc)} \big| \Y_{\ii}(Ts)-\Y_{\cc}(Ts) \big|
	\leq
	2\max_{\ii\in\group_{N,\xi}(\cc)} \big| \bar{\bm}_{\ii}(Ts)-\bar{\bm}_{\ii}(Ts') \big| + NT\,3^{\beta} r.
\end{align}
Using this property in \eqref{e.comparison...} gives
\begin{align}
\label{e.comparison.}
\begin{split}
	\inf_{ \DIST_{N,s'}(\vec{\X},\xi)\leq 3^{\beta} r }
	&\P\big[\calF_N(r') \big|  \vec{\X}(Ts') \big]
\\
	&\geq
	\P\big[ |\bar{\bm}_{\ii}(Ts)-\bar{\bm}_{\ii}(Ts')| < \tfrac{1}{2}NTr',\ \forall\ii\in\group_{N,\xi}(\cc), \forall s\in[s',\sigma]  \big].
\end{split}
\end{align}

Based on what we obtained so far, we now complete the proof of \eqref{e.lwbd.linear}.
Combining \eqref{e.cm} and \eqref{e.event.calF} gives, under the condition
$
	\DIST_{N,s'}(\vec{\X},\xi)\leq 3^{\beta} r
$
and within $s\in[s',\sigma]$,
\begin{align}
	\label{e.lwbd.step2.results}
	\big|\tfrac{1}{NT}\X_{\ii}(Ts)-\xi_{\cc_{N,\xi}(\ii)}(s)\big|
	\leq 
	2\cdot 3^{\beta} r + \tfrac{1}{NT}\max_{ \ii=1,\ldots,N} |\bm_{\ii}(Ts)-\bm_{\ii}(Ts')| + r'
	\quad
	\text{on }\calF_N(r').
\end{align}
We will take $r'\to 0$ later, so let us assume $r'<r/2$ hereafter. 
Consider the event
\begin{align}
	\label{e.event.calE}
	\calE_N(r') := \{ |\bm_{\ii}(Ts)-\bm_{\ii}(Ts')| < NTr', \forall s\in[s',s''],\forall \ii=1,\ldots,N \}.
\end{align}
Under $\calF_N(r')\cap\calE_N(r')$ and given the assumption $r'<r/2$, the right hand side of \eqref{e.lwbd.step2.results} is strictly less than $3^{\beta+1} r$.
Hence, under the condition $\DIST_{N,s'}(\vec{\X},\xi)\leq 3^{\beta} r$, we have
$
	\calF_N(r')\cap\calE_N(r')
	\subset
	\{ \sigma>s'' \}
	\subset
	\control_{N,\beta}[s',s'']
$.
Now apply Lemma~\ref{l.girsanov}\ref{l.girsanov.1} with the $v_{\ii}=v_{\cc_{N,\xi}(\ii)}$ defined above \eqref{e.lwbd.linear.SDE} and with $\calE=\calF_N(r')\cap\calE_N(r')$.
In the result recognize that $(s''-s')(2N)^{-1}\sum_{\ii=1}^N v_{\cc_{N,\xi}(\ii)}^2=\rateq_{[s',s'']}(\xi)$ with the aid of \eqref{e.clustering.rateq}, and bound $N^{-1}\sum_{\ii=1}^N |v_{\cc_{N,\xi}(\ii)}|r'$ by $c(\xi) r'$.
%\begin{align}
%	(s''-s')\sum_{\ii=1}^N \frac{v_{\cc_{N,\xi}(\ii)}^2}{2N} = \rateq_{[s',s'']}(\xi)
%\end{align}
Doing so gives
\begin{align}
\label{e.step1.rersult}
\begin{split}
	&\inf_{ \DIST_{N,s'}(\vec{\X},\xi)\leq 3^{\beta} r }
	\Big\{
		\frac{1}{N^3 T} \log \P\big[ \,\calF_N(r')\cap\calE_N(r')\,\big|\, \vec{\X}(Ts')\,\big]
	\Big\}
\\
	&
	\geq
	\inf_{ \DIST_{N,s'}(\vec{\X},\xi)\leq 3^{\beta} r }
	\frac{1}{N^3 T}\log\law\big[\,\calF_N(r')\cap\calE_N(r')\,\big|\, \vec{\X}(Ts')\,\big]
	-
	c(\xi) r'
	-
	\rateq_{[s',s'']}(\xi).
\end{split}
\end{align}
On the left hand side of \eqref{e.step1.rersult}, bound the probability from above by $\P[ \,\control_{N,\beta}[s',s'']\,|\, \vec{\X}(Ts')\,]$, and send $N\to\infty$ on both sides.
From \eqref{e.comparison.} and \eqref{e.event.calE}, it is not hard to check that the first term on the right hand side of \eqref{e.step1.rersult} converges to $0$ as $N\to\infty$.
Finally sending $r'\to 0$ gives the desired result~\eqref{e.lwbd.linear}.

\medskip
\noindent\textbf{Step 2: analysis within a transition segment.}
Let us set up the notation and state the goal of this step.
Fix a transition segment $[s',s'']$, consider the line (in the $(s,x)$ plane) that connects $(s',\xi_{\cc_{N,\xi}(\ii)}(s'))$ and $(s'',\xi_{\cc_{N,\xi}(\ii)}(s''))$, and let 
\begin{align}
	\label{e.velocity}
	\velo_\ii
	=
	\velo_\ii[s',s''] 
	:= 
	\frac{\xi_{\cc_{N,\xi}(\ii)}(s'') - \xi_{\cc_{N,\xi}(\ii)}(s')}{s''-s'}
	=
	 N \int_{(\ii-1)/N}^{\ii/N} \d a\, \frac{\quant[\xi(s'')]-\quant[\xi(s')]}{s''-s'}
\end{align}
be the velocity when traveling along the line.
Consider
\begin{align}
\label{e.controlling.}
	\controll_{N,\beta}[s',s'']
	&:=
	\Big\{ \, \big|\tfrac{1}{NT}\X_{\ii}(Ts)-\xi_{\cc_{N,\xi}(\ii)}(s')-(s-s')\velo_\ii \big| \leq 3^\beta r, \forall s\in[s',s''], \forall \ii \Big\},
\\
	\label{e.cost}
	\cost_{[s',s'']}(\lambda,\lambda')
	&:= 
	(s''-s') \int_0^1 \d a \, \Big( \frac{|\quant[\lambda']-\quant[\lambda]|}{s''-s'} + \frac{1}{2} \Big)^2.
\end{align}
By \eqref{e.delta.small}, it is not hard to check from \eqref{e.controlling.} that
\begin{align}
	\label{e.controlling..}
	\controll_{N,\beta}[s',s'']
	\subset
	\big\{ \DIST_{N,[s',s'']}\big(\vec{\X},\xi\big) \leq 3^{\beta+1} r \big\}.
\end{align}
The goal of this step is to prove
\begin{align}
\label{e.lwbd.transition}
	\liminf_{N\to\infty} 
	\ \inf
	\Big\{
		\frac{1}{N^3 T}\log\P\big[ \, \controll_{N,\beta+1}[s',s''] \, \big| \, & \vec{\X}(Ts') \big]
	\Big\}
	\geq
	-\cost_{[s',s'']}(\xi(s'),\xi(s'')),
\end{align}
where the infimum runs over $\DIST_{N,s'}(\vec{\X},\xi(s'))\leq 3^\beta r$, just like in Step~1.
%As said, the cost here is suboptimal.
%Later when applying \eqref{e.lwbd.transition}, we will make the segment $[s',s'']$ short so that the collective cost (over all transition segments) becomes negligible.

To prove \eqref{e.lwbd.transition}, consider the law $\law$ under which
$
	\d\X_\ii = N\velo_{\ii}\d s + \d\bm_{\ii}.
$
Apply Lemma~\ref{l.girsanov}\ref{l.girsanov.2} with $v_\ii=\velo_{\ii}$, $J_{\ii}=\{1,\ldots,N\}$, $\calE=\controll_{N,\beta}[s',s'']$, and $p=2$.
Doing so gives
\begin{align}
\label{e.lwbd.transition.girsanov}
\begin{split}
	\frac{1}{N^3T}&\log\P\big[ \, \controll_{N,\beta+1}[s',s''] \, \big| \, \vec{\X}(Ts') \, \big]
\\
	&\geq
	\frac{2}{N^3T}\log\law\big[ \, \controll_{N,\beta+1}[s',s''] \, \big| \, \vec{\X}(Ts') \, \big]
	-
	\frac{s''-s'}{N}\sum_{\ii=1}^N \Big( |\velo_{\ii}| + \frac{1}{2} \Big)^2.
\end{split}
\end{align}
It is readily checked that the first term on the right hand side of \eqref{e.lwbd.transition.girsanov} tends to $0$ as $N\to\infty$, uniformly over $\DIST_{N,s'}(\vec{\X},\xi(s'))\leq 3^\beta r$.
To bound the last term in \eqref{e.lwbd.transition.girsanov}, use \eqref{e.velocity} and Jensen's inequality to get
\begin{align}	
\label{e.lwbd.transition.jensen}
	\Big( |\velo_{\ii}| + \frac{1}{2} \Big)^2
	\leq
	N\int_{(\ii-1)/N}^{\ii/N} \d a\, \Big( \frac{|\quant[\xi(s'')] -\quant[\xi(s')]}{s''-s'} + \frac{1}{2} \Big)^2.
\end{align}
Applying $(s''-s')N^{-1}\sum_{\ii=1}^{N}$ on both sides of \eqref{e.lwbd.transition.jensen} gives
$
	(s''-s')N^{-1}\sum_{\ii} (|\velo_\ii{}|+1/2)^2
	\leq
	\cost_{[s',s'']}(\xi(s'),\xi(s'')).
$
The desired result \eqref{e.lwbd.transition} hence follows.

\medskip
\noindent\textbf{Step 3: Combining Steps~1--2 to complete the proof.}
Apply \eqref{e.lwbd.linear} and \eqref{e.lwbd.transition} in alternating order over the segments $[s_0,s_1-\delta]$, $[s_1-\delta,s_1+\delta]$, \ldots, $[s_{\ell-1}+\delta,s_{\ell}]$, with $\beta=0,1,\ldots,2\ell-2$.
By \eqref{e.controlling} and \eqref{e.controlling.},
$\control_{N,1}[s_0,s_1-\delta] \subset \{ \DIST_{N,s_1-\delta}(\vec{\X},\xi)\leq 3^1 \}$,
$\controll_{N,2}[s_1-\delta,s_1+\delta] \subset \{ \DIST_{N,s_1+\delta}(\vec{\X},\xi)\leq 3^2 \}$,
\ldots, 
so the resulting bounds can be concatenated.
The result gives
\begin{align}
\label{e.lwbd.clustering.}
\begin{split}
	\liminf_{N\to\infty}
	&\inf_{\DIST_{N,0}(\vec{\X},\xi)\leq r}
	\Big\{
		\frac{1}{N^3T} \log 
		\P\big[ \DIST_{N,[0,\termts]}(\vec{\X},\xi) \leq 3^{2\ell-1} r \, \big| \, \vec{\X}(0) \big]
	\Big\}
\\
	&\geq
	- \Big( \sum \rateq_{[s',s'']}(\xi) + \sum \cost_{[s',s'']}(\xi(s'),\xi(s'')) \Big),
\end{split}
\end{align}
where the first sum runs over all linear segments, and the second runs over all transition segments.
The first sum is at most $\rateq(\xi)$, while the second sum tends to zero as $\delta$ tends to zero, as is readily checked from \eqref{e.cost} and the assumption that $\partial_s\quant[\mu]\in\Lsp^2([0,\termts]\times[0,1])$.
Recall that $\delta$ is chosen so that \eqref{e.delta.small}--\eqref{e.linears.order} hold.
With $\xi$ being PL-clustering, we can choose $\delta$ such that $\delta \leq c(\xi) r$.
This way, $\delta\to 0$ as $r\to 0$.
Sending $r\to 0$ in \eqref{e.lwbd.clustering.} gives the desired result \eqref{e.lwbd.clustering}.
%
%
%By \eqref{e.controlling} and \eqref{e.controlling..}, under the controlling events, $|\X_{\ii}(Ts)/(NT)-\xi_{\cc_{N,\xi}(\ii)(s)}|\leq 3^{2\ell} r$, for all $s\in[0,\termts]$ and $\ii=1,\ldots,N$.
%Finally, the collective cost of achieving the controlling events is $\sum \rateq_{[s',s'']}(\xi) + \sum \cost_{[s',s'']}(\xi(s'),\xi(s''))$, where the first sum runs over all linear segments and the second runs over all transition segments.
%
%Recall that $\delta$ is chosen so that \eqref{e.delta.small}--\eqref{e.linears.order} hold.
%As $r\to 0$, the so chosen $\delta$ also tends to $0$.
\end{proof}

\subsection{Proving Proposition~\ref{p.ldp.lw} under Assumption~\ref{assu.truncation}}
\label{s.lwbd.truncated}
Based on the results in Section~\ref{s.lwbd.plclustering}, here we prove Proposition~\ref{p.ldp.lw} under an additional assumption.
Identify $\{\lambda\in\Psp(\R):\supp(\lambda)\subset[-b,b]\}$ with $\Psp[-b,b]$.
Namely, view those measures supported in $[-b,b]$ as measures on $[-b,b]$.

\begin{assu}\label{assu.truncation}
There exists a $b<\infty$ such that $\{\EM_N(0)\}_{N\in\Z_{>0}},\{\mu(s)\}_{s\in[0,\termts]}\subset\Psp[-b,b]$.
\end{assu}
\noindent{}%
Namely, we assume that $\EM_N(0)$ and $\mu(s)$ have a compact support, uniformly over $N\in\Z_{>0}$ and $s\in[0,\termts]$.
Fixing a $\mu$ and $\EM_N(0)$ that satisfy $\rateq(\mu)<\infty$, $\EM_N(0)\Rightarrow\mu(0)$, and Assumption~\ref{assu.truncation}, we seek to prove \eqref{e.lwbd.goal} for this $\mu$ and $\EM_N(0)$.

The proof requires an approximation tool. 
We state it here and put its proof in Appendix~\ref{s.a.approx}:
For any $[s',s'']\subset[0,\termt]$,
\begin{align}
\label{e.lwbd.plapprox}
\begin{split}
	\inf\big\{ &\dist_{[s',s'']}(\xi,\mu) + |\rateq_{[s',s'']}(\xi)-\rateq_{[s',s'']}(\mu)| \, :
\\ 
	&\xi \text{ is PL-clustering }, \supp(\xi(s))\subset[-b,b], \forall s\in[s',s''] \big\} = 0.
\end{split}
\end{align}

Let us outline the proof.
We will construct a small $s_0$ and use \eqref{e.lwbd.plapprox} to construct a PL-clustering $\xi$ that approximates $\mu$, and perform analysis over $[0,s_0]$ and $[s_0,\termts]$ separately.

Let us construct $s_0$ and $\xi$.
Fix an $\e>0$.
By the assumption $\rateq(\mu)<\infty$, $\partial_s\quant[\mu]\in\Lsp^2([0,\termts]\times[0,1])$.
Using this property and the time continuity of $\mu$ to find an $s_0$ such that $\cost_{[0,s_0]}(\mu(0),\mu(s_0))<\e$ and $\sup_{[0,s_0]}\dist(\mu(s),\mu(s_0))<\e$.
Turning to the construction of $\xi$, we begin by noting that $\cost_{[0,s_0]}:(\Psp[-b,b])^2\to\R$ is continuous, which is not hard to verify from \eqref{e.dist} and \eqref{e.cost}.
Granted this continuity, use \eqref{e.lwbd.plapprox} for $[s',s'']=[s_0,\termts]$ to obtain a PL-clustering $\xi$ such that $\dist_{[s_0,\termts]}(\xi,\mu)<\e$, $|\rateq_{[s_0,\termts]}(\xi)-\rateq_{[s_0,\termts]}(\mu)|<\e$, and
\begin{align}
	\label{e.property0}
	\cost_{[0,s_0]}(\mu(0),\xi(s_0))<\e.
\end{align}
Given that $\dist(\EM_N(0),\mu(0))\to 0$, for all $N$ large enough, $\cost_{[0,s_0]}(\EM_N(0),\xi(s_0))<\e$.

Let us perform analysis over $[0,s_0]$ and $[s_0,\termts]$, beginning with the former.
We seek to adapt the argument in Step~2 in Section~\ref{s.lwbd.plclustering}.
More precisely, we seek to apply the argument with $\velo_\ii[s',s'']$ (defined in \eqref{e.velocity}) replaced by
$
	\velo_\ii(s_0) := (\xi_{\cc_{N,\xi}(\ii)}(s_0) - \X_{\ii}(0)/(NT))/s_0
$
and $\controll_{N,\beta}[s',s'']$ (defined in \eqref{e.controlling.}) replaced by
\begin{align}
	\label{e.controlling.i}
	\controlll_{N,r}[0,s_0]
	:=
	\big\{ \, \big|\tfrac{1}{NT}\X_{\ii}(Ts)-\tfrac{1}{NT}\X_{\ii}(0)-\velo_\ii(s_0) s \big| < r, \forall s\in[0,s_0],\forall \ii \big\}.
\end{align}
Applying the argument in Step~2 in Section~\ref{s.lwbd.plclustering} with these adaptations gives
\begin{align}
	\label{e.lw.truncated.1}
	\liminf_{r\to0}\liminf_{N\to\infty}
	\frac{1}{N^3 T}\log\P\big[ \, \controlll_{N,r}[0,s_0]\big]
	\geq
	-\limsup_{N\to\infty} \cost_{[0,s_0]}(\EM_N(0),\xi(s_0)).
\end{align}
Unlike in \eqref{e.lwbd.transition}, we need not impose any constraint on $\vec{\X}(0)$ in \eqref{e.lw.truncated.1}.
This is because when defining the event in \eqref{e.controlling.i}, we use the reference point $\X_{\ii}(0)/(NT)$ instead of $\xi_{\cc_{N,\xi}(\ii)}(0)$; compared with \eqref{e.controlling.}.
Given that $\dist_{[0,s_0]}(\mu(s),\mu(0))<\e$, $ \dist(\xi(s_0),\mu(s_0))<\e $, and $\dist(\EM_N(0),\mu(0))\to 0$, for all $N$ large enough we have $\controlll_{N,r}[0,s_0]\subset \{ \dist_{[0,s_0]}(\EM_N,\mu)<r+2\e \}$.
Also, note that 
\begin{align}
	\label{e.property1}
	\controlll_{N,r}[0,s_0] \subset\{\DIST_{N,s_0}(\vec{\X},\xi)\leq r\}.
\end{align}
Move on to $[s_0,\termts]$.
Apply Proposition~\ref{p.lwbd.clustering} with $[0,\termts]\mapsto [s_0,\termts]$.
In the result, use \eqref{e.dist<DIST} and $\dist_{[s_0,\termts]}(\xi,\mu)<\e$.
Doing so gives
\begin{align}
	\label{e.lw.truncated.2}
	\liminf_{r\to 0} \liminf_{N\to\infty} \ \inf\Big\{
		\frac{1}{N^3T} \log \P\big[\, \dist_{[s_0,\termts]}(\EM_N,\xi) < \e + c r \, \big| \, \vec{\X}(Ts_0) \,\big] 
	\Big\}
	\geq
	-\rateq_{[s_0,\termts]}(\mu),
\end{align}
where the infimum runs over $\DIST_{N,s_0}(\vec{\X},\xi)\leq r$.
Further,
\begin{align}
	\label{e.property2}
	-\rateq_{[s_0,\termts]}(\mu) \geq-\rateq_{[0,\termts]}(\mu)=-\rateq(\mu).
\end{align}

Combine \eqref{e.lw.truncated.1} and \eqref{e.lw.truncated.2} with the aid of \eqref{e.property1}.
In the result, use \eqref{e.property0} and \eqref{e.property2}, and send $\e\to 0$.
Doing so gives \eqref{e.lwbd.goal} under Assumption~\ref{assu.truncation}.

\subsection{Removing Assumption~\ref{assu.truncation}}
\label{s.lwbd.remove.truncation}
Having proven \eqref{e.lwbd.goal} under Assumption~\ref{assu.truncation}, we explain how the same result follows without the assumption.
Taking any $\mu\in\Csp([0,\termts],\Psp(\R))$ with $\rateq(\mu)<\infty$ and $\mu(0)=\mu_\Start$, we seek to show that \eqref{e.lwbd.goal} holds for $\EM_N$ and this $\mu$.

We begin by truncating $\mu$ and $\EM_N$ so that the result satisfies Assumption~\ref{assu.truncation}.
Fix a sequence $1/2>r_1>r_2>\ldots\to 0$ such that $\quant[\mu_\Start]$ is continuous at every $r_k$ and $1-r_k$.
Fix any $k$.
Recall the scaling operator $\scale_\mm$ from Section~\ref{s.basic.m=1}.
Apply the procedure in Section~\ref{s.basic.divide} with $(\mm_1,\mm_2,\mm_3)=(r_k,1-2r_k,r_k)$ to get $\mu_1,\mu_2,\mu_3$ and take $\nu:=\mu_2$ as the truncated deviation.
By Lemma~\ref{l.useful}\ref{l.useful.tailbd}, the truncated deviation $\nu$ satisfies Assumption~\ref{assu.truncation}.
Next, truncate the empirical measure similarly.
Recall that $\X_{1}(0)\leq \ldots \leq \X_{N}(0)$.
Set $N_- := \lceil Nr_k \rceil $, $N_+ := \lfloor N(1-r_k)\rfloor $, $N':=N_+-N_-$, and
$
	\tEM_{N}(s) := \frac{1}{N} \sum_{N_-\leq\ii\leq N_+} \delta_{\X_{\ii}(Ts)/(NT)}.
$
Note that $\EM_N(0)\Rightarrow\mu_\Start$ is equivalent to $\quant[\EM_N(0)]$ converging to $\quant[\mu_\Start]$ everywhere the latter is continuous.
Recall that $\quant[\mu_\Start]$ is continuous at each $r_k$ and $1-r_k$.
These properties imply, as $N\to\infty$, $\tEM_N(0)\Rightarrow\nu(0)$, $X_{N_-}(0)/(NT)\to\quant[\mu_\Start](r_k)$, and $X_{N_+}(0)/(NT)\to\quant[\mu_\Start](1-r_k)$.
The last two convergences show that $\tEM_N(0)$ is supported in an $N$-independent bounded interval.

Next, we claim that
\begin{align}
	\label{e.lwbd.close.meausre}
	\dist_{[0,\termts]}(\EM_N,\mu)
	\leq
	\dist_{[0,\termts]}(\tEM_N,\nu)
	\leq 
	c\cdot(r_k + \tfrac{1}{N}).
\end{align}
Recall from Section~\ref{s.basic.divide} that the graph of $\cdf[\nu(s)]=\cdf[\mu_2(s)]$ is obtained by taking the graph of $\cdf[\mu(s)]$ between the levels $r_k$ and $1-r_k$ and shift the graph down so that the lower level is at $0$.
The same property holds for $(\cdf[\tEM_{N}(s)],\cdf[\EM_N(s)])$ with $(r_k,1-r_k)\mapsto(N_-/N,1-N_+/N)$.
By construction, $N_-/N$ and $N_+/N$ differ from $r_k$ by at most $1/N$.
Using these properties gives
\begin{align}
	\label{e.lwbd.close.meausre.}
	\int_{-k}^{k} \d a \, \big| \cdf[\EM_N(s)] - \cdf[\mu(s)] \big|
	\leq
	\int_{-k}^{k} \d a \, \Big(
		\big| \cdf[\tEM_N(s)] - \cdf[\nu(s)] \big|
		+	
		c  \cdot \big( r_k + \frac{1}{N} \big)
	\Big).
\end{align}
Combining \eqref{e.lwbd.close.meausre.} and \eqref{e.dist} gives \eqref{e.lwbd.close.meausre}.

Next, we would like to apply the result of Section~\ref{s.lwbd.truncated} with $\EM_N$ and $\mu$ being replaced by $\tEM_N$ and $\nu$.
However, note that the particles in $\tEM_N$ are not autonomous and feel the pulling from particles outside of $\tEM_N$.
Namely, after being restricted to $\ii\in[N_-,N_+]$, the system of equations \eqref{e.aBP} still contains terms $\frac12\sgn(\X_{\jj}-\X_{\ii})$ from outside of the system $\jj\notin[N_-,N_+]$.
To resolve this issue, consider the law $\law$ under which 
\begin{align}
	&\d \X_\ii = \sum_{\jj\in[N_-,N_+]} \frac{1}{2} \sgn(\X_\jj-\X_\ii) \d s + \d\bm_{\ii},
	&&
	\text{ for all } \ii\in[N_-,N_+],&
\\
	&\d\X_\ii = \sum_{\jj=1}^{N} \frac{1}{2} \sgn(\X_\jj-\X_\ii) \d s + \d\bm_{\ii},
	&&
	\text{ for all } \ii\notin[N_-,N_+].&
\end{align}
Under $\law$, the truncated empirical measure $\tEM_N$ evolves autonomously.
Apply Lemma~\ref{l.girsanov}\ref{l.girsanov.2} with $[s',s'']=[0,\termts]$, $v_\ii=0$, $J_{\ii}=\{1,\ldots,N_--1,N_++1,\ldots,N\}$ for $\ii\in[N_-,N_+]$, and $J_{\ii}=\emptyset$ for $\ii\notin[N_-,N_+]$.
In the result, bound $|J_\ii|/(2N)$ by $cr_k$.
Doing so gives
\begin{align}
	\label{e.lwebd.girsanov}
	\frac{1}{N^3T}\log\P\big[ \dist_{[0,\termts]}(\tEM_N,\nu)<r \big]
	\geq
	\frac{p}{N^3T}\log\law\big[ \dist_{[0,\termts]}(\tEM_N,\nu)<r \big]
	-
	\frac{p\termt\,(cr_k)^2}{2(p-1)}.
\end{align}
Next, apply the result of Section~\ref{s.lwbd.truncated} with $(\EM_N,\mu,\P)\mapsto (\tEM_N,\nu,\law)$.
Note that, even though $\tEM_{N}$ and $\nu$ do not have unit mass, by the scaling arguments in Section~\ref{s.basic.m=1}, the result still applies.
Doing so gives, for any $p>1$ and $k\in\Z_{>0}$,
\begin{align}
	\label{e.lwbd.tEM}
	\liminf_{r\to 0}
	\liminf_{N\to\infty} \frac{1}{N^3T} \log \P\big[ \dist_{[0,\termts]}(\tEM_N,\nu)<r \big]
	\geq
	- p \rateq(\nu) - \frac{p\termt\, (cr_k)^2}{2(p-1)}.
\end{align}

We are now ready to complete the proof.
So far we have omitted most $k$ dependence.
Restore it by writing $\nu=\nu_k$, $\tEM_N = \tEM_{N,k}$, and $N'=N'_k$.
%The results from the second last paragraph give $\dist_{[0,\termts]}(\tEM_{N,k},\nu_{k})+\dist_{[0,\termts]}(\EM_{N},\mu) \leq c\,r_k$.
By the construction of $\nu=\nu_k$ and the assumption $\rateq(\mu)<\infty$, it is not hard to check that $\rateq(\nu_k)\to\rateq(\mu)$ as $k\to\infty$.
%Note that $\lim_{N\to\infty}N'_k/N=1-2r_k$, which tends to $1$ as $k\to\infty$.
In \eqref{e.lwbd.tEM}, send $k\to\infty$ with the aid of this property and \eqref{e.lwbd.close.meausre}.
Finally sending $p\to 1$ completes the proof.

\section{applications to the moment Lyapunov exponents}
\label{s.momentL}
As was explained in Section~\ref{s.basic.m=1}, we consider $ \mm=1 $ only.
\subsection{Proof of Theorem~\ref{t.optimal}}
\label{s.momentL.optimal}
We begin with some notation.
Let $\xx_\cc$, $\xxi$, $\mm_\cc$, and the optimal deviation $\optimal=\sum_{\cc=1}^n \mm_\cc\delta_{\optimalc{\cc}}$ be as in Section~\ref{s.results.momentL}.
Recall branches from there and recall that $\Branch$ denotes the set of branches.
Let $\mm_\bb:=\sum_{\cc\in\bb}\mm_\cc$ be the total mass within a given branch $\bb$. 
By definition, branches are disjoint intervals in $\{1,\ldots,n\}$, so we can order them, and we use $<$ and $\leq$ to denote the ordering.
For a given branch $\bb$, we write $\bb\ominus 1$ and $\bb\oplus 1$ for the respective branches that precedes and succeeds $\bb$ in $\Branch$.

To facilitate the proof, let us prepare a few properties of the optimal clusters $\optimal_1,\ldots,\optimal_n$.
From their definition given after Theorem~\ref{t.optimal}, it is not hard to check that
\begin{align}
	\label{e.optimal.de}
	\dot{\optimal}_\cc =\Sgn[\optimal](\optimalc{\cc})+\diner_\bb,
	\quad
	\text{for all } \cc\in\bb
	\text{ and for all } s\in[0,\termts] \text{ except when a merge happens},
\end{align}
where $\diner_\bb$ is given in \eqref{e.optimal.frominertia}.
Recall also that those $\optimalc{\cc}$ and $\optimalc{\cc'}$ belonging to different branches do not meet within $(0,\termts)$.
Using this property to take the mass-weighted average of \eqref{e.optimal.de} over $\cc\in\bb$ gives
\begin{align}
	\label{e.optimal.bb.de}
	\dot{\optimal}_\bb=\pull_\bb+\diner_\bb,
	\qquad
	\pull_\bb := \tfrac12(\ldots-m_{\bb\ominus 1}+m_{\bb\oplus 1}+\ldots).
\end{align}
This equation shows that $\dotoptimalc{\bb}$ is a constant.
Combining \eqref{e.optimal.de}--\eqref{e.optimal.bb.de} gives $\dot{\optimal}_\cc -\Sgn[\optimal](\optimalc{\cc})=\dot{\optimal}_\bb-\pull_\bb$, for all $\cc\in\bb$.
Inserting this identity into \eqref{e.clustering.rateq} for $\nu=\optimal$ and $[s',s'']=[0,\termts]$ gives
\begin{align}
	\label{e.optimal.rateq}
	\rateq(\optimal)
	=
	\sum_{\bb\in\Branch} \termt \frac{\mm_\bb}{2}(\dotoptimalc{\bb}-\pull_\bb)^2.
\end{align}

We now begin the proof, starting with a reduction.
Set the starting and ending conditions to be $\mu_\Start:= \sum_{\cc}\mm_\cc\delta_{\xx_\cc}$ and $\mu_\End:=\delta_{\xxi}$, with $1=\mm_1+\ldots+\mm_n$.
%Using \eqref{e.mom.rep} in reverse in \eqref{e.mini} gives
By \eqref{e.minii},
\begin{align}
\label{e.mini.}
\begin{split}
	\momshe&\big( \xxi \xrightarrow{\termt} (\vecxx,\vecmm) \big)
\\
	&=
	\tfrac{\termt}{24}
	+
	\Ip{ \mu_\Start^{\otimes 2}, \tfrac{1}{4}|x-x'|} - \Ip{ \mu_\End^{\otimes 2}, \tfrac{1}{4}|x-x'|}
	-
	\inf\big\{ \rateq(\mu) : \mu(0)=\mu_\Start, \mu(\termt)=\mu_\End \big\}.
\end{split}
\end{align}
Take any $\mu\in\Csp([0,\termts],\Psp(\R))$ with $\mu(0)=\mu_\Start$ and $\mu(\termts)=\mu_\End$.
Given \eqref{e.mini.}, it suffices to prove that $\rateq(\mu)\geq\rateq(\optimal)$ and that the equality holds only if $\mu=\optimal$.
Without loss of generality, assume $\rateq(\mu)<\infty$, which implies $\partial_s\quant[\mu]\in\Lsp^2([0,\termts]\times[0,1])$.

\medskip
\noindent\textbf{Step 1: proving that $\rateq(\mu)\geq\rateq(\optimal)$.}
We begin with some notation.
First, write $\quant[\mu(s)](a)=\quant(s,a)$ and $\Sgn[\mu(s)](\quant(s,a))=\Sgn_\quant(s,a)$ to simplify notation.
Next, apply the procedure in Section~\ref{s.basic.divide} with $(\mm_1,\mm_2,\ldots)=(\mm_\bb)_{\bb\in\Branch}$ to get $(\mu_\bb)_{\bb\in\Branch}$, where the branches are ordered as described previously.
Let $\MM_\bb := \sum_{\bb'\leq\bb} \mm_{\bb'} $ be the cumulative mass.
% and write $\quant[\mu(s)](\MM_\bb):=\quant_\bb(s)$ so that each $\mu_\bb(s)$ is supported in $[\quant_{\bb\ominus 1}(s),\quant_{\bb}(s)]$. 

Let us derive a lower bound on $\rateq(\mu)$.
Refer to \eqref{e.rateq} for the definition of $\rateq(\mu)$ and divide the integral $\int_0^1\d a$ into $\int_{\MM_{\bb\ominus 1}}^{\MM_\bb} \d a$, $\bb\in\Branch$, with the convention that $\MM_{\text{(first cluster)}\ominus 1}:= 0$.
Note that this procedure is equivalent to the procedure of dividing $\mu$ in the previous paragraph.
In each of the resulting integral, multiply and divide by $\termt\mm_\bb$ and apply Jensen's inequality to get
\begin{align}
	\rateq(\mu)
	&=
	\sum_{\bb\in\Branch} \termt\frac{\mm_\bb}{2}
	\int_{0}^{\termt} \frac{\d s}{\termt} \int_{\MM_{\bb\ominus 1}}^{\MM_\bb} \frac{\d a}{\mm_{\bb}} \, \Big(\partial_s\quant - \Sgn_{\quant}\Big)^2 
\\
	\label{e.optimal.>1}
	&\geq
	\sum_{\bb\in\Branch} \termt\frac{\mm_\bb}{2}
	\Big(
		\int_{0}^{\termt} \frac{\d s}{\termt} \int_{\MM_{\bb\ominus 1}}^{\MM_\bb} \frac{\d a}{\mm_\bb} \, \partial_s\quant 
		- 
		\int_{0}^{\termt} \frac{\d s}{\termt} \int_{\MM_{\bb\ominus 1}}^{\MM_\bb} \frac{\d a}{\mm_\bb} \, \Sgn_{\quant}
	\Big)^2.
\end{align}

Next we derive an expression for the first integral in \eqref{e.optimal.>1}. 
Evaluate the integral over $[0,\termts]$ by the fundamental theorem of calculus, recognize the result as $\ip{\mu_\bb(s),x}|^{\termt}_{0}/(\termt\mm_\bb)$, and use $\mu(0)=\optimal(0)$ and $\mu(\termts)=\optimal(\termts)$.
Recall the center of mass $\optimalc{\bb}:=\sum_{\cc\in\bb} (\mm_\cc \optimalc{\cc})/\mm_\bb$.
The resulting expression is hence $\optimalc{\bb}(s)|_{0}^{\termt}/\termt$.
As shown in \eqref{e.optimal.bb.de}, $\dot{\optimal}_\bb$ is a constant, so
\begin{align}
	\label{e.optimal.1}
	\int_{0}^{\termt} \frac{\d s}{\termt} \int_{\MM_{\bb\ominus 1}}^{\MM_\bb} \frac{\d a}{\mm_\bb} \, \partial_s\quant
	=
	\dot{\optimal}_\bb
\end{align}

Next we derive an expression for the last integral in \eqref{e.optimal.>1}. 
Recall from \eqref{e.Sgn.id} that the integrand takes two different forms depending on whether $a_-=a_+$ or $a_-<a_+$, where $a_\pm=a_\pm[\mu(s)](a)$ is defined in \eqref{e.a+-}.
This dichotomy is particularly relevant at $a=\MM_{\bb\ominus 1}$ and $a=\MM_{\bb}$, which are the boundaries of the last integral in \eqref{e.optimal.>1}. 
With this in mind, set $\MM^\pm_{\bb} = \MM^\pm_{\bb}(s) := a_\pm[\mu(s)](M_b)$, 
and note that
\begin{align}
	\label{e.M.mono}
	\MM^-_{\bb} \leq \MM_{\bb} \leq \MM^+_{\bb}.
\end{align}
Divide the integral over $[\MM_{\bb\ominus 1},\MM_{\bb}]$ into integrals over $[\MM_{\bb\ominus 1}, \MM_{\bb\ominus 1}^+]$, $[\MM^+_{\bb\ominus 1}, \MM^-_{\bb}]$, and $[\MM^-_{\bb},\MM_{\bb}]$ and use~\eqref{e.Sgn.id} to evaluate the integrals:
\begin{align}
	\label{e.optimal.2}
	\int_{\MM_{\bb\ominus 1}}^{\MM_\bb} \frac{\d a}{\mm_\bb} \, \Sgn_{\quant}
	=
	\frac{1}{\mm_\bb} \Big(
		A
		+
		\int_{\MM^+_{\bb\ominus 1}}^{\MM^-_\bb} \d a \, (\tfrac12 -a)
		+
		A'
	\Big),
\end{align}
where
$
	A := \frac{1}{2}(\MM^+_{\bb\ominus}-\MM_{\bb\ominus 1})(1-\MM^+_{\bb\ominus 1}-\MM^-_{\bb\ominus 1})
$ 
and 
$
	A' = \frac{1}{2}(\MM_{\bb}-M^-_{\bb})(1-\MM^+_{\bb}-\MM^-_{\bb}).
$
Recall $\pull_\bb$ from \eqref{e.optimal.bb.de} and write it as
\begin{align}
	\label{e.optimal.3}
	\pull_\bb
	=
	\frac{1}{\mm_\bb} \int_{\MM_{\bb\ominus 1}}^{\MM_\bb} \d a \, (\tfrac12 -a)
	=
	\frac{1}{\mm_\bb} \Big(
		A''
		+
		\int_{\MM^+_{\bb\ominus 1}}^{\MM^-_\bb} \d a \, (\tfrac12 -a)
		+
		A'''
	\Big),
\end{align}
where
$
	A'' := \frac{1}{2}(\MM^+_{\bb\ominus 1}-\MM_{\bb\ominus 1})(1-\MM^+_{\bb\ominus 1}-\MM_{\bb\ominus 1})
$ 
and 
$
	A''' := \frac{1}{2}(\MM_{\bb}-\MM^-_{\bb})(1-\MM_{\bb}-\MM^-_{\bb}).
$
Subtract \eqref{e.optimal.3} from \eqref{e.optimal.2} and simplify $(A-A'')$ and $(A'-A''')$ in the result.
Doing so gives
\begin{align}
	\label{e.optimal.4}
	\int_{\MM_{\bb\ominus 1}}^{\MM_\bb} \frac{\d a}{\mm_\bb} \, \Sgn_{\quant}
	=
	\pull_\bb + \frac{1}{\mm_\bb} D_{\bb\ominus 1} - \frac{1}{\mm_\bb} D_\bb,
\end{align}
where
\begin{align}
	\label{e.D}
	D_\bb:= \tfrac{1}{2}(\MM^+_{\bb}-\MM_{\bb})(\MM_{\bb}-\MM^-_{\bb}),
\end{align}
with the convention that $ D_{\text{(first branch)}\ominus 1}:=0 $.

We are now ready to complete the proof of Step~1.
Insert \eqref{e.optimal.1} and \eqref{e.optimal.4} into \eqref{e.optimal.>1}, expand the resulting square, and compare the result with \eqref{e.optimal.rateq}.
Doing so gives 
\begin{align}
	\label{e.optimal.5.}
	\rateq(\mu)
	&\geq
	\rateq(\optimal)
	+
	\sum_{\bb\in\Branch} 
	\frac{\termt\mm_{\bb}}{2} 
	\Big( \int_0^{\termt} \frac{\d s}{\termt} \, \frac{D_{\bb}-D_{\bb\ominus 1}}{\mm_{\bb}} \Big)^2
	+
	\sum_{\bb\in\Branch}
	\big( \dot\optimal_{\bb}-\pull_{\bb}\big) 
	\int_0^{\termt} \d s \, \big(D_{\bb}-D_{\bb\ominus 1}\big).
\end{align}
Rewrite the last sum as 
$
	\sum_{\bb\in\Branch} \int_0^{\termt} \d s \, 
	( - \dot\optimal_{\bb\oplus 1}+\pull_{\bb\oplus 1}+ \dot\optimal_{\bb}-\pull_{\bb}) D_{\bb}
$,
with the convention that $\dot{\optimal}_{\text{(last branch)}\oplus 1}:=0$ and $\pull_{\text{(last branch)}\oplus 1}:=0$.
We arrive at 
\begin{subequations}
\label{e.optimal.5}
\begin{align}
\label{e.optimal.5a}
	\rateq(\mu)
	\geq
	\rateq(\optimal)
	&+
	\sum_{\bb\in\Branch} \frac{1}{2\termt\mm_{\bb}} 
	\Big( \int_0^{\termt} \d s \, \big( D_{\bb}-D_{\bb\ominus 1} \big) \Big)^2
\\
\label{e.optimal.5b}
	&+\sum_{\bb\in\Branch}
	\int_{0}^{\termt} \d s
	\big( -\dot\optimal_{\bb\oplus 1}+\dot\optimal_{\bb} +\pull_{\bb\oplus 1}-\pull_{\bb} \big) \, D_\bb.
\end{align}
\end{subequations}
By \eqref{e.optimal.bb.de}, $(-\dot\optimal_{\bb\oplus 1}-\dot\optimal_{\bb} +\pull_{\bb\oplus 1}-\pull_{\bb}) = -\diner_{\bb\oplus 1} + \diner_{\bb}$.
This quantity is strictly positive, as is readily checked from \eqref{e.optimal.frominertia}.
Also, $D_\bb$ is nonnegative by \eqref{e.M.mono} and \eqref{e.D}.
Hence $\rateq(\mu) \geq \rateq(\optimal) $.

\medskip
\noindent\textbf{Step 2: proving that $\rateq(\mu)=\rateq(\optimal)$ implies $\mu=\optimal$.}
The strategy is to extract information on $\mu$ from the condition $\rateq(\mu)=\rateq(\optimal)$.
Set $\quant_{\bb}(s):=\quant[\mu(s)](M_{\bb})$ and note that $\mu_\bb(s)$ is supported in $[\quant_{\bb\ominus 1}(s),\quant_{\bb}(s)]$.
As explained after \eqref{e.optimal.5}, $(\dot\optimal_{\bb\oplus 1}-\dot\optimal_{\bb} -\pull_{\bb\oplus 1}+\pull_{\bb})>0$; recall that $D_\bb\geq 0$ by \eqref{e.M.mono} and \eqref{e.D}.
Using these properties and the condition $\rateq(\mu)=\rateq(\optimal)$ in \eqref{e.optimal.5b} gives $D_\bb=0$ for all $\bb$.
This property together with \eqref{e.M.mono} and \eqref{e.D} forces $M^-_\bb=M^+_\bb$.
This being true for all $\bb$ implies that $\mu_\bb(s)$ has no atoms at $\quant_{\bb\ominus 1}(s)$ and $\quant_{\bb}(s)$.

We continue to extract information from the condition $\rateq(\mu)=\rateq(\optimal)$.
The condition forces the inequality in \eqref{e.optimal.>1} to be an equality, which in turn forces
$ 
	\partial_s\quant - \Sgn_{\quant}
$
to be a constant Lebesgue a.e.\ on $[0,\termts]\times[\MM_{\bb\ominus 1}, \MM_\bb]$.
By \eqref{e.optimal.1}, \eqref{e.optimal.4}, and $D_\bb=0$, the constant is
$
	\dot{\optimal}_\bb-\pull_\bb
$. 
Hence
\begin{align}
	\label{e.t.optimal.quant.de}
	\partial_s\quant - \Sgn_{\quant}= \dot{\optimal}_\bb-\pull_\bb,
	\qquad
	\text{Lebesgue a.e.\ on } [0,\termts]\times[\MM_{\bb\ominus 1}, \MM_\bb].
\end{align}
We seek to ``localize'' \eqref{e.t.optimal.quant.de} onto $\mu_\bb$.
More precisely, we seek to rewrite \eqref{e.t.optimal.quant.de} in terms of $\quant[\mu_\bb]$ and $\Sgn[\mu_\bb]$.
First, by the construction of $\mu_\bb$ in the first paragraph in Step~1, $\quant[\mu_\bb(s)](a+\MM_{\bb\ominus 1})=\quant(s,a)$ for all $a\in[0,\mm_\bb]$, so $(\partial_s\quant)|_{(s,a+\MM_{\bb\ominus 1})}=(\partial_s\quant[\mu_\bb])|_{(s,a)}$.
Next, the definition \eqref{e.Sgn} of $\Sgn[\Cdot]$ gives
\begin{align}
	\label{e.optimal.Sgnbb}
	\Sgn[\mu(s)](x) 
	= 
	\Sgn[\mu_\bb(s)](x) 
	- \frac12 \sum_{\bb'<\bb} \ip{ \mu_{\bb'}(s), \ind_{(-\infty,x)} }
	+ \frac12 \sum_{\bb'>\bb} \ip{ \mu_{\bb'}(s), \ind_{(x,\infty)} }.
\end{align}
Since $\mu_\bb(s)$ is supported in $[\quant_{\bb\ominus 1}(s), \quant_{\bb}(s)]$, we consider those $x$ in this interval only.
As mentioned in the previous paragraph, $\mu_{\bb'}(s)$ has not atoms at $\quant_{\bb'\ominus 1}(s)$ and $\quant_{\bb'}(s)$, for all $\bb'$.
Using this property in \eqref{e.optimal.Sgnbb} simplifies the last two sums together into $\pull_\bb$, defined in \eqref{e.optimal.bb.de}.
We now rewrite \eqref{e.t.optimal.quant.de} as
\begin{align}
	\label{e.evolutionary.}
	\partial_s\quant[\mu_\bb] - \Sgn[\mu_\bb](\quant[\mu_\bb]) = \dot{\optimal}_\bb,
	\quad
	\text{Lebesgue a.e.\ on } [0,\termts]\times[0, \mm_\bb].	
\end{align}
Further using \eqref{e.optimal.bb.de} gives
\begin{align}
	\label{e.optimal.evolutionary.mubb}
	\partial_s\quant[\mu_\bb] - \Sgn[\mu_\bb](\quant[\mu_\bb]) = \pull_\bb+\diner_\bb,
	\quad
	\text{Lebesgue a.e.\ on } [0,\termts]\times[0, \mm_\bb].
\end{align}

Apply the procedure in Section~\ref{s.basic.divide} to divide $\mu_\bb$ according to the masses $(\mm_{\cc})_{\cc\in\bb}$ to get $(\mu_{\cc})_{\cc\in\bb}$.

Taking any $\cc\in\bb$, let us show that $\mu_\cc(s)$ is supported at a single point.
This amounts to showing that $\quant[\mu_\bb(s)]|_{a\in U_\cc}$ is a constant, where
$
	U_\cc:=( \sum_{\cc'\in\bb, \cc'<\cc} \mm_{\cc'},  \sum_{\cc'\in\bb, \cc'\leq \cc} \mm_{\cc'})
$.
Given~\eqref{e.evolutionary.}, consider any $a_1<a_2\in U_\cc$ such that, for $j=1,2$,
\begin{align}
	\label{e.evolutionary}
	\partial_s\quant[\mu_\bb(s)](a_j) - \Sgn[\mu_\bb(s)](\quant[\mu_\bb])(a_j) = \dot{\optimal}_\bb
	\text{ holds for Lebesgue a.e.\ } s\in[0,\termts].
\end{align}
It is readily checked from \eqref{e.Sgn} and \eqref{e.cdf.quant} that $\Sgn[\lambda](x)$ is nonincreasing in $x$ and $\quant[\lambda](a)$ is nondecreasing in $a$.
Hence $\Sgn[\mu_\bb(s)](\quant[\mu_\bb])(a_1)\geq \Sgn[\mu_\bb(s)](\quant[\mu_\bb])(a_2)$.
Take the difference of \eqref{e.evolutionary} for $j=1$ and $j=2$, use the last inequality, integrate the result over $[0,s]$, and use $\quant[\mu(0)](a_1)=\quant[\mu(0)](a_2)=\xx_\cc$.
The result gives $\quant[\mu(s)](a_1)\geq \quant[\mu(s)](a_2)$.
Since $\quant[\lambda](a)$ is nondecreasing in $a$, $\quant[\mu(s)]|_{a\in[a_1,a_2]}$ is a constant.
Sending $[a_1,a_2]\to U_\cc$ shows that $\quant[\mu_\bb(s)]|_{a\in U_\cc}$ is a constant.
Hence $\mu_\cc(s)$ is supported at a single point; let $x_{\mu,\cc}(s)=x_{\mu,\cc}$ denote this point.

It remains only to show that $x_{\mu,1},\ldots,x_{\mu,n}$ coincidence with $\optimalc{1},\ldots,\optimalc{n}$.
The measure $\mu(s)$ being continuous in $s$ implies that $x_{\mu,\cc}(s)$ is too.
Hence, there exists an $s_0>0$ such that $x_{\mu,1},\ldots,x_{\mu,n}$ do not meet within $[0,s_0)$.
Within $s\in[0,s_0)$, the equation \eqref{e.optimal.evolutionary.mubb} gives $\dot{x}_{\mu,\cc} = \Sgn[\mu_\bb](x_{\mu,\cc}) + \pull_\bb + \diner_\bb$, for $\cc\in\bb$.
The first two terms on the right hand side add to $\frac12(-\mm_1-\ldots-\mm_{\cc-1}+\mm_{\cc+1}+\ldots+\mm_n)=:\pull_\cc $, so $x_{\mu,\cc}$ moves at the constant velocity $\pull_\cc+\diner_\bb$ within $[0,s_0)$.
This description of evolution matches that of the optimal clusters $\optimalc{1},\ldots,\optimalc{n}$, so $x_{\mu,\cc}|_{[0,s_0]}=\optimalc{\cc}|_{[0,s_0]}$, for all $\cc$.
By a time-continuity argument, this equality extends the first time when a merge happens in the optimal clusters.
Take this first merge time as the new starting time and run the same argument.
Continuing inductively completes the proof.

\subsection{Proof of Corollary~\ref{c.momentL}}
\label{s.momentL.moml}
Recall that we consider $\mm=1$ only.

\ref{c.momentL1}\ 
Given \eqref{e.2}, the proof amounts to showing that
\begin{align}
	\label{e.c.momentL1}
	\frac{1}{N^3T} \log \P\big[ \,\big|\tfrac{1}{NT}\X_{\ii}(T\termt)-\xxi\big| \leq \alpha, \forall \ii \big]
	+
	\inf\Big\{ \rateq(\mu) : \mu(0)=\sum_{\cc=1}^n \mm_\cc\delta_{\xx_\cc}, \mu(\termt)=\delta_{\xxi} \Big\}
\end{align}
tends to zero as $N\to\infty$ first and $\alpha\to 0$ later, where the attractive BPs start from $\X_{\ii}(0)/(NT)=\xx_{\cc_{N,\optimal}(\ii)}$ or equivalently $\EM_N(0)=\sum_{\cc=1}^n \mm_\cc \delta_{\xx_\cc}$.
By \eqref{e.wass.coupling}, the event $\{|\X_{\ii}(T\termt)/NT-\xxi|\leq r,\forall \ii\}$ implies $\dist(\EM_N(\termt),\mu_\End)\leq r$. 
Hence, by Theorem~\ref{t.ldp} and the contraction principle, the limit of \eqref{e.c.momentL1} is $\leq 0$.
To prove the reverse inequality, note that, by Theorem~\ref{t.optimal}, the infimum in \eqref{e.c.momentL1} is $\rateq(\optimal)$.
Applying Proposition~\ref{p.lwbd.clustering} with $\xi=\optimal$ gives the reversed inequality $\geq 0$.

\medskip
\ref{c.momentL2}\ Let us set up the notation and goal.
Fix a nonempty $A\subset\{1,\ldots,n\}$, recall $\cc_{N,\optimal}$ from \eqref{e.assignment}, and let $\group_{N,\optimal}(A)=\{ \ii: \cc_{N,\optimal}(\ii) \in A \}$.
Since $A$ is nonempty, $ |\group_{N,\optimal}(A)|/N \geq \min\{\mm_1,\ldots,\mm_n\}:=\mm_*>0 $.
Recall $\Dist_{N,[s',s'']}$ from \eqref{e.Dist}--\eqref{e.Dist[]}, and, for $\beta>0$, consider the event $\calD_N:=\{\Dist_{N,[0,\termts]}(\X_\ii,\optimal)> \beta, \forall \ii\in\group_N(A)\}$.
Similar to \ref{c.momentL1}, proving \ref{c.momentL2} amounts to proving that
\begin{align}
	\label{e.c.momentL2}
	\limsup_{\alpha\to 0} \limsup_{N\to\infty}
	\frac{1}{N^3T} \log \P\big[ \big\{\big|\tfrac{1}{NT}\X_{\ii}(T\termt)-\xxi\big| \leq \alpha, \forall \ii\big\}\cap\calD_N \big]
	<
	-\rateq(\optimal).
\end{align}

The strategy of proving \eqref{e.c.momentL2} is to derive a lower bound on $\dist_{[0,\termts]}(\EM_N,\optimal)$ from the condition imposed by $\calD_N$.
First, it is not hard to check from \eqref{e.dist} that, for a large enough $k_0=k_0(\optimal)$,
\begin{align}
	\label{e.c.momentL2.}
	\dist_{[0,\termt]}(\EM_N,\optimal)
	\geq
	\dist(\EM_N(s),\optimal(s))
	\geq
	 2^{-k_0}\min\Big\{1, \frac{1}{N}\sum_{\ii=1}^{N} \Dist_{N,s}\big(\X_{\ii},\optimal\big) \Big\}.
\end{align}
Under $\calD_N$, for every $\ii\in\group_{N,\optimal}(A)$, there exists a random $s(\ii)\in[0,\termts]$ such that $\Dist_{N,s(\ii)}(\X_{\ii},\optimal)>\beta$.
If these $s(\ii)$s happen to be all the same, by \eqref{e.c.momentL2.}, $\dist_{[0,\termt]}(\EM_N,\optimal) \geq 2^{-k_0} \mm_* \min\{1,\beta\}$.
In general, those $s(\ii)$s are different, so we need some time-continuity estimates.
Recall the event $\calU_N(v)$ from \eqref{e.event.bmcontrol}.
Use the bounds derived after \eqref{e.event.bmcontrol} to fix a $v$ so that 
\begin{align}
	\label{e.Ugood}	
	\limsup_{N\to\infty} \frac{1}{N^3T}\log\P\big[\,\calU_N(v)^\comple\big] < - \rateq(\optimal),
\end{align}
and write $\calU_N(v)=\calU_N$ hereafter.
We claim that, there exists a $c_1=c_1(v,\optimal)<\infty$ such that
\begin{align}
	\label{e.fraction}
	\text{under } \calD_N\cap\calU_N,
	\quad
	\sup_{s\in[0,\termts]}\big\{ \big| \{ \ii\in\group_{N,\optimal}(A) : \Dist_{N,s}(\X_{\ii},\optimal)>\beta/2 \} \big|  \big\} 
	\geq 
	N\mm_*/c_1.
\end{align}
Namely, under $\calU_N$, a $1/c_1$ fraction of $\ii\in\group_{N,\optimal}(A)$ simultaneously satisfy $\Dist_{N,s}(\X_{\ii},\optimal)>\beta/2$ at least once within $s\in[0,\termts]$.
To see why, divide $[0,\termts]$ into $\ell$ equally spaced subintervals.
Within each subinterval, apply the continuity estimate \eqref{e.aBP.control} and use the continuity of $\optimal_{\cc}(s)$, $\cc=1,\ldots,n$.
By choosing $\ell$ large enough (depending only on $v$, $\optimal$, and $\beta$), we ensure that, 
under $\calU_N$, the bound
\begin{align}
	\label{e.thebound}
	\max_{k=1,\ldots,\ell}
	\sup_{s,s'\in[\frac{(k-1)\termt}{\ell},\frac{k\termt}{\ell}]}
	\Big|
		\big| \tfrac{1}{NT}\X_{\ii}(Ts)-\optimal_{\cc_{N,\optimal}(\ii)}(s) \big| 
		- 
		\big| \tfrac{1}{NT}\X_{\ii}(Ts')-\optimal_{\cc_{N,\optimal}(\ii)}(s') \big| 
	\Big|
	\leq
	\frac{\beta}{2}
\end{align}
holds for at least $N(1-\mm_{*}/2)$ many $\ii\in\{1,\ldots,N\}$.
Since $|\group_{N,\optimal}(A)|\geq N\mm_{*}$, the bound~\eqref{e.thebound} holds for at least $N\mm_{*}/2$ many $\ii\in\group_{N,\optimal}(A)$.
The claim \eqref{e.fraction} hence follows with $c_1=2\ell$.
Set $c_2:=2^{-k_0}\min\{1,\beta\mm_*/(2c_1)\}>0$.
Combining \eqref{e.c.momentL2.}--\eqref{e.fraction} gives
\begin{align}
	\label{e.fraction.}
	\calD_N\cap\calU_N \subset \big\{ \dist_{[0,\termt]}(\EM_N,\optimal) \geq c_2  \big\}.
\end{align}

Let us complete the proof of \eqref{e.c.momentL2}.
First, by \eqref{e.fraction.} and Theorem~\ref{t.ldp},
\begin{align}
\label{e.c.momentL2..}
\begin{split}
	\limsup_{\alpha\to 0}&\limsup_{N\to\infty}
	\frac{1}{N^3T} \log \P\big[ \big\{\big|\tfrac{1}{NT}\X_{\ii}(T\termt)-\xxi\big| \leq \alpha, \forall \ii\big\}\cap\calD_N\cap\calU_N \big]
\\
	&\leq
	\inf\Big\{ \rateq(\mu): \dist_{[0,\termts]}(\mu,\optimal) \geq c_2 : \mu(0)=\sum_\cc\mm_\cc\delta_{\xx_\cc}, \mu(\termts)=\mm\delta_{\xxi} \Big\}.
\end{split}
\end{align}
By Theorem~\ref{t.optimal}, the right hand side of \eqref{e.c.momentL2..} is strictly smaller than $-\rateq(\optimal)$.
Given this and \eqref{e.Ugood}, the desired result~\eqref{e.c.momentL2} now follows.

\section{Connection to $\ratekpz$ and the limit shape}
\label{s.matching}
Here, we establish a few properties of $\ratekpz$ and the limit shape.
In doing so, we prove Theorem~\ref{t.matching}: Part~\ref{t.matching.legendre} is proven in Sections~\ref{s.matching.convex}, \ref{s.matching.homeo}, and \ref{s.matching.legendre}; Parts~\ref{t.matching.shape}--\ref{t.matching.intermediate} are proven in Section~\ref{s.matching.shape}.

\subsection{Basic properties of $\ratekpz$}
\label{s.matching.convex}
Fix $t$ and $(\xx_1<\ldots<\xx_n)$ and writing $\ratekpz(t,\vecxx,\vechv)=\ratekpz(\vechv)$, $\hf{,t,\vecxx,\vechv}=\hf{,\vechv}$, $\hvspace(t,\vecxx)=\hvspace$, and $\hvspacec(t,\vecxx)=\hvspacec$ to simplify notation. 

Let us show that $\ratekpz:\hvspace\to[0,\infty)$ is strictly convex, which implies that $\ratekpz|_{\hvspacec} $ is strictly convex.
The key is to recognize $\hf{,\vechv}$ as the minimizer of a variational problem.
Recall that $\parab(t,x)=-x^2/(2t)$.
For any piecewise $\Csp^1$ function $\f:\R\to\R$ such that $\f \geq \parab(t)$ and $\f(x)=\parab(t,x)$ for all large enough $|x|$, consider $\ratebm(\f) := \int_{\R} \d x \, ( \frac12( \partial_x \f)^2 - \frac12(\partial_x\parab(t))^2) $.
Note that the integral is well-defined and finite because $\f(x)=\parab(t,x)$ for all large enough $|x|$.
It is not hard to check that
\begin{align}
	\label{e.matching.convex.var}
	\ratekpz(\vechv)
	:=
	\ratebm(\hf{,\vechv})
	=
	\inf\big\{ \ratebm(\f) \,:\, \f (\xx_\cc) = \hv_\cc, \, \cc=1,\ldots, n \big\}.
\end{align}
Namely, $\hf{,\vechv}$ is the minimizer of the infimum in \eqref{e.matching.convex.var}.
For any $\gamma\in(0,1)$, use the convexity of $u\mapsto u^2/2$ to write
\begin{align}
	\label{e.matching.convex}
	\tfrac12 \big(\gamma \partial_x\hf{,\vechv_1} +(1-\gamma)\partial_x\hf{,\vechv_2} \big)^2
	\leq
	\gamma \tfrac12 \big(\partial_x\hf{,\vechv_1} \big)^2	
	+
	(1-\gamma) \tfrac12 \big(\partial_x\hf{,\vechv_2} \big)^2.
\end{align}
Subtract $(\partial_x(-\parab(t)))^2/2$ from both sides and integrate both sides over $x\in\R$.
The right hand side is $\gamma\ratekpz(\vechv_1)+(1-\gamma)\ratekpz(\vechv_2)$.
The left hand side is at least $\ratekpz(\gamma\vechv_1+(1-\gamma)(\vechv_2))$ by the variational characterization~\eqref{e.matching.convex.var}.
This proves the convexity.
To prove the strictness, note that the equality in \eqref{e.matching.convex} holds only if $\partial_x\hf{,\vechv_1}=\partial_x\hf{,\vechv_2}$.
This being true for Lebesgue a.e.\ $x\in\R$ forces $\vechv_1=\vechv_2$.

Next, we show that, for every $\vechv\in\hvspace$,
\begin{align}
	\label{e.ratekpz.gradient}
	\partial_{\cc}\ratekpz(\vechv)
	:=
	\partial_{\hv_\cc}\ratekpz(t,\vecxx,\vechv) 
	= 
	(\partial_x \hf{,\vechv})(\xx_\cc^-) - (\partial_x \hf{,\vechv})(\xx_\cc^+),
	\quad
	\text{for all } \cc=1,\ldots,n.
\end{align}
Let $L_{\cc,\cc+1}=L_{\cc,\cc+1}(x)$ denote the linear function such that $L_{\cc,\cc+1}(\xx_{\cc})=\hv_{\cc}$ and that $L_{\cc,\cc+1}(\xx_{\cc+1})=\hv_{\cc+1}$, and let $\xx_{\cc,-}:=\inf\{x\geq \xx_{\cc-1} : \partial_x\hf{,\vechv}(x)=\partial_x\hf{,\vechv}(\xx_\cc^-)\}$, with the convention $\xx_{0}:=-\infty$.
In words, $\xx_{\cc,-}$ is $\xx_{\cc-1}$ when $L_{\cc,\cc+1}|_{[\xx_{\cc},\xx_{\cc+1}]}\geq\parab(t)|_{[\xx_{\cc},\xx_{\cc+1}]}$, otherwise $\xx_{\cc,-}$ is the ``tangent point'' to the left of $\xx_\cc$; the tangent points are those labeled by triangles in Figure~\ref{f.shape.termt}.
Define $\xx_{\cc,+}$ similarly.
To prove \eqref{e.ratekpz.gradient}, it suffices to consider those $\vechv\in\hvspace$ such that
\begin{subequations}
\label{e.ratekpz.gradient.assum}
\begin{align}
	&\text{ either } L_{\cc-1,\cc}|_{[\xx_{\cc-1},\xx_{\cc}]}>\parab(t)|_{[\xx_{\cc-1},\xx_{\cc}]} \text{ or } \xx_{\cc-1,+}<\xx_{\cc,-} \text{ are both tangent points},
\\
	&\text{ either } L_{\cc,\cc+1}|_{[\xx_{\cc},\xx_{\cc+1}]}>\parab(t)|_{[\xx_{\cc},\xx_{\cc+1}]} \text{ or }
	\xx_{\cc,+}<\xx_{\cc+1,-} \text{ are both tangent points.}
\end{align}
\end{subequations}
Indeed, for fixed $\hv_1,\ldots,\hv_{\cc-1},\hv_{\cc+1},\ldots,\hv_{n}$, there exist at most two values of $\hv_{\cc}$ for which \eqref{e.ratekpz.gradient.assum} fails.
Also, as is readily checked, the right hand side \eqref{e.ratekpz.gradient} is continuous on $\hvspace$.
Hence, once \eqref{e.ratekpz.gradient} is proven for those $\vechv\in\hvspace$ satisfying \eqref{e.ratekpz.gradient.assum}, the result extends to all $\vechv\in\hvspace$.

Let us prove \eqref{e.ratekpz.gradient} under the assumption \eqref{e.ratekpz.gradient.assum}.
Under this assumption,
perturbing $\hv_\cc$ changes $\hf{,\vechv}$ only within $x\in[\xx_{\cc,-},\xx_\cc]\cup[\xx_\cc,\xx_{\cc,+}]$, so
\begin{align}
\label{s.matching.convex1}
\begin{split}
	\partial_{\cc}\ratekpz(\vechv) 
	=
	&\partial_{\hv_\cc} \int_{\xx_{\cc,-}}^{\xx_\cc} \d x \, \big( \tfrac12( \partial_x \hf{,\vechv})^2 - \tfrac12(\partial_x\parab(t))^2 \big)
\\
	&+
	\partial_{\hv_\cc} \int_{\xx_\cc}^{\xx_{\cc,+}} \d x\, \, \big( \tfrac12( \partial_x \hf{,\vechv})^2 - \tfrac12(\partial_x\parab(t))^2 \big).
\end{split}
\end{align}
In the integrals in \eqref{s.matching.convex}, only $\xx_{\cc,-},\xx_{\cc,+},\hf{,\vechv}$ may depend on $\hv_{\cc}$.
Under \eqref{e.ratekpz.gradient.assum}, either $\partial\xx_{\cc,-}/\partial\hv_{\cc}=0$ or $(\partial_x \hf{,\vechv})(\xx_{\cc,-})=(\partial_x\parab)(t,\xx_{\cc,-})$.
Hence, in \eqref{s.matching.convex1}, the contribution of differentiating $\xx_{\cc,-}$ is zero.
The same holds for $\xx_{\cc,+}$.
What remains is the contribution of differentiating the integrands:
\begin{align}
	\partial_{\cc}\ratekpz(\vechv) 
	=
	\int_{[\xx_{\cc,-},\xx_\cc]} \d x \, \partial_x \hf{,\vechv} \ \frac{1}{\xx_\cc-\xx_{\cc,-}}
	+
	\int_{[\xx_\cc,\xx_{\cc,+}]} \d x \, \partial_x \hf{,\vechv} \Big( \frac{-1}{\xx_{\cc,+}-\xx_\cc} \Big).
\end{align}
Note that the integrands are constant, so the last expression evaluates to the right hand side of \eqref{e.ratekpz.gradient}.

\subsection{The map $\nabla\ratekpz:\hvspacec\to[0,\infty)^n$ is a homeomorphism}
\label{s.matching.homeo}
Notation as in Section~\ref{s.matching.convex}.

First, by the strict convexity from Section~\ref{s.matching.convex}, $\nabla\ratekpz$ is injective, and from~\eqref{e.ratekpz.gradient}, it is not hard to check that $\nabla\ratekpz$ is continuous. 

To prepare for the rest of the proof, for any $\vecmm\in[0,\infty)^n$, we consider $S(\vecmm):=\{\vechv\in\hvspacec: \partial_\cc \ratekpz(\vechv)\leq \mm_\cc, \cc=1,\ldots,n\}$ and establish a few properties of it.
First, the set $S(\vecmm)$ is nonempty because it contains $(-\xx_\cc^2/(2t))_{\cc=1}^n$.
Next, we claim that, for any compact $K\subset\R^n$,
$
	S(K)
	:=
	 \cup_{\vecmm\in K } S(\vecmm) 
$
is compact.
It is not hard to check that $S(K)$ is closed.
By definition, every $\vechv\in S(K)$ satisfies $\hv_1 \geq -\xx^2_1/(2t),\ldots,\hv_{n} \geq -\xx^2_{n}/(2t)$, so we only need upper bounds on $\hv_1,\ldots,\hv_n$.
Sum the formula \eqref{e.ratekpz.gradient} over $\cc,\ldots,n$ to get
\begin{align}
	(\partial_{x} \hf{,\vechv})(\xx^-_{\cc}) - (\partial_{x} \hf{,\vechv})(\xx^+_{n}) 
	= 
	\partial_{\cc}\ratekpz(\vechv) + \ldots + \partial_{n}\ratekpz(\vechv)
	\leq
	\mm_\cc+\ldots+\mm_n.
\end{align}
Recall $\xx_{\cc,\pm}$ from after \eqref{e.ratekpz.gradient}, and note that $(\partial_{x}\hf{,\vechv})(\xx^+_{n})=(\partial_x\parab)(t,\xx_{n,+})\leq(\partial_x\parab)(t,\xx_n)$.
Hence $\partial_{x} (\hf{,\vechv})(\xx^-_{\cc}) $ is bounded from above, and the bound can be chosen uniformly over $\vecmm\in K$.
Using this property inductively for $\cc=1,\ldots,n$ shows that $\hv_1,\ldots,\hv_n$ are bounded from above, uniformly over $\vecmm\in K$.

Let us prove that $\nabla\ratekpz$ is surjective.
Let $\pi_\cc:\R^n\to\R$ be the projection onto the $\cc$th coordinate, take any $\vecmm\in[0,\infty)^n$, consider $\hv_{*,\cc'}:=\sup\pi_{\cc'}(S(\vecmm))$, and set $\vechv_*:=(\hv_{*,\cc'})_{\cc'=1}^n$.
Fix any $\cc$.
By the construction of $\vechv_*$ and the compactness of $S(\vecmm)=S(\{\vecmm\})$, there exists a convergent sequence $\vechv_{(k)}\to\vechv_{(\infty)}$ in $S(\vecmm)$ such that $\hv_{(\infty),\cc} = \hv_{*,\cc}$ and $\hv_{(\infty),\cc'}\leq \hv_{*,\cc'}$ for all $\cc'$.
These properties together with the property that $\hf{,\vechv_{(\infty)}}$ is concave gives that 
\begin{align}
	(\partial_x \hf{,\vechv_*})(\xx_\cc^-) - (\partial_x \hf{,\vechv_*})(\xx_\cc^+)
	\leq
	(\partial_x \hf{,\vechv_{(\infty)}})(\xx_\cc^-) - (\partial_x \hf{,\vechv_{(\infty)}})(\xx_\cc^+).
\end{align}
The right hand side is at most $\mm_\cc$ because $\vechv_{(\infty)}\in S(\vecmm)$.
We arrive at the inequality
\begin{align}
	\label{e.matching.homeo1}
	\partial_\cc\ratekpz(\vechv_*)
	=
	(\partial_x \hf{,\vechv_*})(\xx_\cc^-) - (\partial_x \hf{,\vechv_*})(\xx_\cc^+)
	\leq
	\mm_\cc.
\end{align}
Next, we have the following property that can be checked by referring to Figure~\ref{f.shape.termt}:
For any $\vechv\in\hvspacec$ (and in fact for any $\vechv\in\hvspace$), when $\hv_\cc$ increases while other components remain fixed, the quantity $ \partial_\cc\ratekpz = (\partial_x \hf{,\vechv})(\xx_\cc^-) - (\partial_x \hf{,\vechv})(\xx_\cc^+) $ increases while other ($\partial_{\cc'}\ratekpz$)s decrease or stay the same.
This property forces the inequality in \eqref{e.matching.homeo1} to be an equality: Otherwise, by increasing $\hv_{\cc}$, we can increase  $\partial_{\cc}\ratekpz$ while maintaining the inequality $\partial_{\cc'}\ratekpz\leq\mm_{\cc'}$ for all $\cc'$, which contradicts the construction of $\vechv_*$.
Since $\cc$ was arbitrary, $ \partial_\cc\ratekpz(\vechv_*)=\mm_\cc$, for $\cc=1,\ldots,n$, and this gives the desired surjectivity.

Finally, we prove that $(\nabla\ratekpz)^{-1}$ is continuous.
The previous paragraph gives $(\nabla\ratekpz)(S(\vecmm))\ni\vecmm$.
Take any $b<\infty$ and consider $S':=S([0,b]^n)$. 
We have $(\nabla\ratekpz)(S')\supset [0,b]^n $.
Since $S'$ is compact, the continuity of $\nabla\ratekpz$ implies the continuity of $(\nabla\ratekpz)^{-1}$ on $[0,b]^n$.

\subsection{Limit shape and its shocks}
\label{s.matching.shape}
Fix $\vechv\in\hvspacec=\hvspacec(\termt,\vecxx)^\circ$.
%The limit shape $\mps$ is the entropy solution of the backward equation \eqref{e.iburgers.back} with the terminal condition $\mps(\termt,\Cdot)=\hf{}=\hf{,\termt,\vecxx,\vechv}$.
Hereafter, we will often drop the $\termt,\vecxx,\vechv$ dependence to simplify notation.

We begin by recalling some PDE background related to $\mps$; we refer to \cite[Ch.~3]{evans2022} for an introduction on this topic.
Consider Burgers' equation and its Hamilton--Jacobi equation
\begin{align}
	\label{e.iburgers}
	\partial_t \u = \tfrac12 \partial_x( \u^2),
	\qquad
	\partial_t \h = \tfrac12 (\partial_x \h)^2,
	\qquad
	(t,x)\in(0,\termts]\times\R.
\end{align}
The two equations are related by $\partial_x \h = \u$.
These equations can have multiple weak solutions under a given initial condition, but have a unique entropy solution given by the Hopf--Lax operator:
\begin{align}
	\label{e.hl.fw}
	\h(t) = \HL_{t}\big(\h(0)\big),
	\qquad
	\HL_{t}(\f)(x) := \sup\Big\{ - \frac{(x-y)^2}{2t} + \f(y) : y\in\R \Big\}. 
\end{align}
Now, \emph{time reverse} what is described above and consider the backward version of \eqref{e.iburgers}--\eqref{e.hl.fw}:
\begin{align}
	\label{e.iburgers.back}
	&\partial_s\h(\termt-s,x) = -\tfrac12 (\partial_x \h(\termt-s,x))^2,
	\qquad
	\partial_s\u(\termt-s,x) = -\tfrac12 \partial_x(\u(\termt-s,x))^2,
\\
	\label{e.hl}
	&(\HLop_s(\f))(x) := \inf_{y\in\R} \Big\{ \frac{(x-y)^2}{2s} + \f(y) \Big\}.
\end{align}
Under such notation, the limit shape $\mps$, defined in \eqref{e.mps}, is obtained by taking $\hf{}=\hf{,\termt,\vecxx,\vechv}$ as the terminal condition and evolving it backward by $\HLop$. 
As is readily checked, a weak solution of \eqref{e.iburgers.back} is also a weak solution \eqref{e.iburgers}.
On the other hand, an entropy solution of \eqref{e.iburgers.back} (which we call backward entropy) is in general \emph{not} an entropy solution of \eqref{e.iburgers} (which we call forward entropy).
Hence $\mps$ is a weak solution of \eqref{e.iburgers} and \eqref{e.iburgers.back}, is backward entropy, but is not forward entropy.
Put $\mpu:=\partial_x\mps$.
The \textbf{characteristics} are linear trajectories in spacetime along which $\mpu$ is constant.
The \textbf{shocks} are trajectories in spacetime across which $\mpu$ is not continuous.

Let us give a geometric description of $\mps$.
Write $\mps(\termt)=\hf{}$ as an infimum of lines:
\begin{subequations}
\label{e.hf.linear}
\begin{align}
\label{e.hf.linear.1}
	\hf{}
	=
	\min\big\{	&\min\big\{ 
		L^\bb_{\triangleleft}, L^\bb_{\cc,\cc+1}, L^\bb_{\triangleright} : \bb\in\Branchs
	\big\},
\\
\label{e.hf.linear.2}
	&\inf\big\{ L : L \text{ is a tangent line of } \parab(\termt) \text{ at places where } \parab(\termt)=\hf{}
	\big\}
	\big\}.
\end{align}
\end{subequations}
Here, $\Branchs$ is a partition of $\{1,\ldots,n\}$ into intervals, where each $\bb\in\Branchs$ is a maximal set of $\cc$s such that $(\hf{}-\parab(\termt))|_{x\in[-\xx_\cc,\xx_{\cc'}]}>0$ for all $\cc,\cc'\in\bb$; we will show later that $\Branchs=\Branch$.
Let $\tangent^\bb_{\triangleleft}$ and $\tangent^\bb_{\triangleright}$ be the left and right tangent points as depicted in Figure~\ref{f.shape.termt.lines.1}.
The minimum in \eqref{e.hf.linear.1} runs over all $\bb\in\Branchs$ and $\cc,\cc+1\in\bb$, and accounts for the piecewise linear part of $\hf{}$, as depicted in Figure~\ref{f.shape.termt.lines.1}.
The infimum in \eqref{e.hf.linear.2} accounts for the parabolic part of $\hf{}$, as depicted in Figure~\ref{f.shape.termt.lines.2}.
Next, apply $\HLop_{s}$ to both sides of \eqref{e.hf.linear} and exchange the infimum over $y$ (in \eqref{e.hl}) with other minimums and infimums to get
\begin{subequations}
\label{e.hf.linear.}
\begin{align}
\label{e.hf.linear..1}
	\mps(\termt-s)
	=
	\min&\big\{	\min\big\{ 
		\HLop_{s}(L^\bb_{\triangleleft}), \HLop_{s}(L^\bb_{\cc,\cc+1}), \HLop_{s}(L^\bb_{\triangleright}) : \bb\in\Branchs
	\big\},
\\
\label{e.hf.linear..2}
	&\inf\big\{ \HLop_{s}(L) : L \text{ is a tangent line of } \parab(\termt) \text{ at places where } \parab(\termt)=\hf{}
	\big\}
	\big\}.
\end{align}
\end{subequations}
For a linear function $L(x)=v_1x+v_2$, it is readily checked that $\HLop_s(L)=L-(s/2)(\partial_xL)^2$.
It is also readily checked that $\HLop_s(\parab(\termt))=\parab(\termt-s)$.
Hence, for $s\in(0,\termt)$, the limit shape $\mps(\termt-s)$ is obtained by vertically shifting the lines in Figures~\ref{f.shape.termt.lines.1}--\ref{f.shape.termt.lines.2} by $-(s/2)\cdot(\text{slope})^2$ and taking the infimum of the result.
Note that every  shifted line stays above $\parab(\termt-s)$, since $\HLop_s$ preserves orders: Namely, $f_1\geq f_2$ implies $\HLop_s(f_{1}) \geq \HLop_s(f_2)$.
Within each $\bb\in\Branchs$, the leftmost and rightmost lines (those indexed by $\triangleleft$ and $\triangleright$) touch the parabola $\parab(\termt-s)$ at tangent.
At the tangent points, $\partial_{x}\parab(\termt-s)=-x/(\termt-s)$ is equal to the slopes of those lines, so the $x$ coordinates of the tangent points are $(\termt-s)\tangent^\bb_{\triangleleft}/\termt$ and $(\termt-s)\tangent^\bb_{\triangleright}/\termt$, which trace out the dashed lines in Figure~\ref{f.shocks}.

\begin{figure}
\begin{minipage}{.51\linewidth}
\boxed{\includegraphics[width=\linewidth]{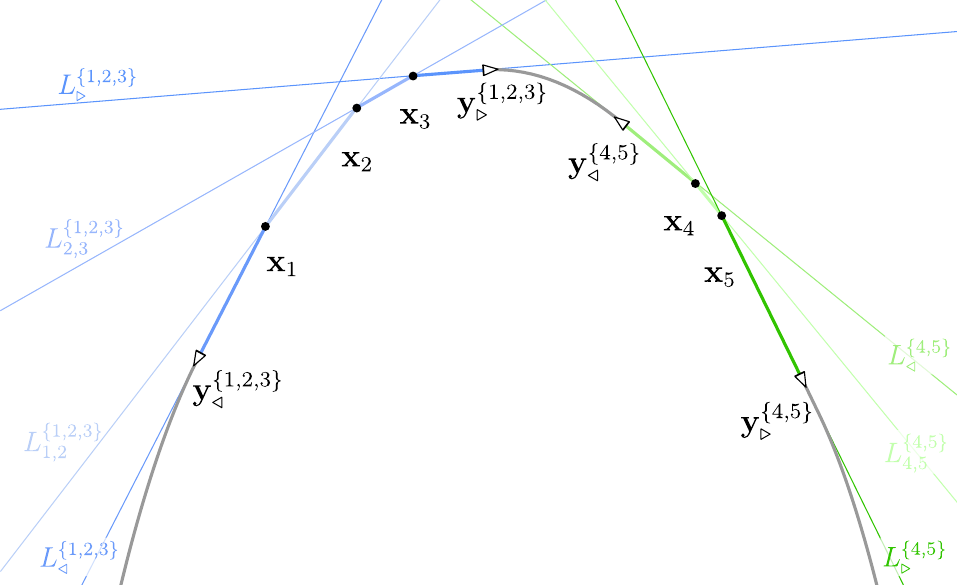}}
\caption{$\hf{}$ as an infimum of lines, the piecewise linear part.
In this Figure, $\Branchs=\{\{1,2,3\},\{4,5\}\}$.
}
\label{f.shape.termt.lines.1}
\end{minipage}
\hfill
\begin{minipage}{.47\linewidth}
\boxed{\includegraphics[width=\linewidth]{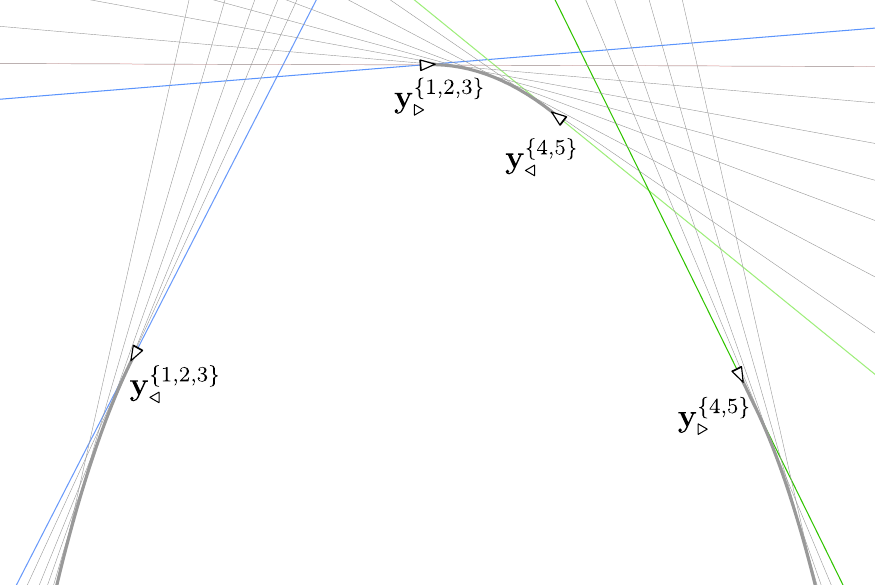}}
\caption{$\hf{}$ as an infimum of lines, the parabolic part.
}
\label{f.shape.termt.lines.2}
\end{minipage}
\end{figure}

\begin{figure}
\fbox{\includegraphics[width=.5\linewidth]{shocks}}
\caption{The shocks (thick solid lines), characteristics (thin solid lines), and tangent points (dashed lines).}
\label{f.shocks}
\end{figure}

Based on the preceding description of $\mps$, we infer some properties of $\mps$ and its shocks.
Let $ \cone_\bb:=\{(t,x):t\in[0,\termts], x\in[(t/\termts)\tangent^{\bb}_{\triangleleft},(t/\termts)\tangent^{\bb}_{\triangleright}]\}$ be the spacetime region bounded by the tangent points.
They are the colored regions in Figure~\ref{f.shocks}.
\begin{enumerate}[leftmargin=20pt, label=(\Roman*)]
\item \label{eu.shock.outside}
Outside $\cup_{\bb\in\Branchs}\cone_\bb$, $\mps(t,x)=\parab(t,x)=-x^2/(2t)$, so the characteristics are straight lines that connect $(t,x)=(0,0)$ and $\{\termt\}\times\R$; see Figure~\ref{f.shocks}.
\item \label{eu.shock.constant}
Within each $\cone_\bb$, $\mpu$ is piecewise constant, with values given by the slopes of $L^\bb_{\triangleleft}$, $L^\bb_{\triangleright}$, and a subset of $\{L^\bb_{\cc,\cc+1}: \cc,\cc+1\in\bb\}$; the jumps of $\mpu$ occur exactly along the shocks; see Figure~\ref{f.shocks}.
\end{enumerate}
Let $\shock{\cc}=\shock{\cc}(s)$ denote the shocks, parameter in the backward time $s$.
Let $\intermt,\xxi_{\aa}$ be as in the notation of Theorem~\ref{t.matching}\ref{t.matching.intermediate.}.
By the Rankine--Hugoniot relation (see \cite[Ch.~3]{evans2022} for example)
\begin{align}
	\label{e.rankinehugoniot}
	\dotshock{\aa}|_{s=\termt-\intermt}
	:=
	(\tfrac{\d~}{\d s} \shock{\aa})(\termt-\intermt)
	=
	\tfrac12 ( \mpuu{\aa}^- + \mpuu{\aa}^+),
	\quad
	\text{where }
	\mpuu{\aa}^\pm:= \mpu(\intermt,\shock{\aa}(\termt-\intermt)^\pm).
\end{align}
This relation and Properties~\ref{eu.shock.outside}--\ref{eu.shock.constant} together give the following property.
\begin{enumerate}[leftmargin=20pt, label=(\Roman*)]
\setcounter{enumi}{2}
\item \label{eu.shock.linear}
For each $\bb\in\Branchs$, those shocks $\shock{\cc}$ with $\cc\in\bb$ stay within $\cone_\bb$ and travel at constant velocities except when they meet. 
Further, shocks $\shock{\cc}$ and $\shock{\cc'}$ merge within $s\in(0,\termts)$ if and only if they belong to the same $\bb\in\Branchs$, so $\Branchs=\Branch$.
\end{enumerate}
By Property~\ref{eu.shock.constant}, $\mps$ is linear in a neighborhood on either side of a shock and hence solves \eqref{e.iburgers} \emph{classically} there.
Using this property gives the following.
\begin{enumerate}[leftmargin=20pt, label=(\Roman*)]
\setcounter{enumi}{3}
\item \label{eu.shock.diff}
Let $\intermt,\xxi_{\aa}$ be as in the notation of Theorem~\ref{t.matching}\ref{t.matching.intermediate.}.
For all $\intermt\in(0,\termts)$ except when shocks merge, 
$
	\frac{\d~}{\d t}\mps(t,\shock{\aa}(\termt-t))|_{t=\intermt}
	=
	\frac12(\mpuu{\aa}^-)^2 -\mpuu{\aa}^-\,\dotshock{\aa}
$,
where $\mpuu{\aa}^-:= \mpu(t,\shock{\aa}(\termt-\intermt)^-)$.
\end{enumerate}

Let us prove Theorem~\ref{t.matching}\ref{t.matching.intermediate}.
To prove \eqref{e.legendre.dual.intermediate}, recall that $\nabla_{\vechv}\ratekpz(\termt,\vecxx,\vechv)=\vecmm$ and combine this relation with the formula \eqref{e.ratekpz.gradient} for $t=\termt$ to get $\mpuu{\cc}^--\mpuu{\cc}^+ = \mm_\cc$.
Next, use the formula \eqref{e.ratekpz.gradient} for $t=\intermt$ to get $\nabla_{\vechv}\ratekpz(\intermt,\vecxxi,\vechvi) = \mpuu{\aa}^--\mpuu{\aa}^+$,
telescope the right hand side as $\sum_{\cc\in\Aranch(\aa)}(\mpuu{\cc}^--\mpuu{\cc}^+)$, use $\mpuu{\cc}^--\mpuu{\cc}^+ = \mm_\cc$, and recognize the resulting sum as $\mmi_\aa$.
Doing so concludes \eqref{e.legendre.dual.intermediate}.
To prove \eqref{e.treesg}, let $\optimal$ be the optimal deviation for \eqref{e.mini} with $\xxi=0$.
From the definition of the optimal deviation (in Section~\ref{s.results.momentL}), it is readily checked that $\optimal(\Cdot+(\termt-\intermt))|_{[0,\intermt]}$ is the optimal deviation for \eqref{e.mini} with $[0,\termt]\mapsto[0,\intermt]$, $\xxi=0$, $(\vecxx,\vecmm)\mapsto(\vecmmi,\vecxxi)$.
Similarly, for each $\aa=1,\ldots,n'$, the deviation $\sum_{\cc\in\Aranch(\aa)}\mm_\cc\delta_{\optimal_\cc}|_{[0,\termt-\intermt]}$ is the optimal deviation for \eqref{e.mini} with $[0,\termt]\mapsto[0,\termt-\intermt]$, $\xxi=\xxi_\aa$, $(\vecxx,\vecmm)\mapsto(\xx_\cc,\mm_\cc)_{\cc\in\Aranch(\aa)}$.
These properties together with Theorem~\ref{t.optimal} give \eqref{e.treesg}.

Let us prove Theorem~\ref{t.matching}\ref{t.matching.shape}.
With $\vecxx,\vechv$ having been fixed, set $\vecmm:=(\nabla\ratekpz)(\termt,\vecxx,\vechv)$.
For this $\vecmm$, let $(\optimalc{\cc})_{\cc=1}^n$ be the optimal clusters.
Take any intermediate time $\intermt\in(0,\termts]$ and let $\aa\in\{1,\ldots,n'\}$, $\Aranch(\aa)$, $\mmi{}_{\aa}$ be as in Notation for Theorem~\ref{t.matching}\ref{t.matching.intermediate.}, and recall $\diner_\bb$ from before Theorem~\ref{t.optimal}.
We seek to prove that, for any $\aa\in\{1,\ldots,n'\}$ with $\Aranch(\aa)\subset\bb$,
\begin{align}
	\label{e.match.ode}
	\dotshock{\aa}=\frac12\sum_{\aa'=1}^{n'} \sgn(\shock{\aa'}-\shock{\aa}) \mmi_{\aa'} + \diner_\bb.
\end{align}
Referring to the definition of the optimal clusters (before Theorem~\ref{t.matching}\ref{t.matching.intermediate.}), we see that \eqref{e.match.ode} means that the shocks evolve the same way (in the backward time $s$) as the optimal clusters.
Since the shocks and optimal clusters share the same starting condition, namely $(\shock{\cc}(0))_{\cc=1}^n=(\xx_{\cc})_{\cc=1}^{n}=(\optimalc{\cc}(0))_{\cc=1}^n$, once \eqref{e.match.ode} is proven, the desired result $(\shock{\cc})_{\cc=1}^n=(\optimalc{\cc})_{\cc=1}^n$ follows.
To prove \eqref{e.match.ode}, fix $\aa,\bb$ with $\Aranch(\aa)\subset\bb$. 
Consider the set $\{\aa': \Aranch(\aa')=\bb\}$ and write it at $\{\aa': \Aranch(\aa')=\bb\}=[\aa^{\triangleleft},\ldots,\aa^{\triangleright}]\cap\Z$.
Let $\mpuu{\triangleleft}^\bb$ and $\mpuu{\triangleright}^\bb$ denote the respective slopes of $L^\bb_{\triangleleft}$ and $L^\bb_{\triangleright}$.
By Property~\ref{eu.shock.constant}, $\mpuu{\triangleleft}^\bb=\mpuu{\aa^{\triangleleft}}^-$, $\mpuu{\triangleright}^\bb=\mpuu{\aa^{\triangleright}}^+$, and $\mpuu{\aa'}^+=\mpuu{\aa'+1}^-$ for all $\aa'\in[\aa^{\triangleleft},\aa^{\triangleright}-1]$.
Use these properties to telescope the right hand side of \eqref{e.rankinehugoniot} as
\begin{align}
	\label{e.shock}
	\dotshock{\aa} 
	= 
	\frac{1}{2} \Big(	
		\mpuu{\triangleleft}^\bb 
		+
		\sum_{\aa'\in[\aa^{\triangleleft},\aa)} \big( -\mpuu{\aa'}^- + \mpuu{\aa'}^+ \big)
		+
		\sum_{\aa'\in(\aa,\aa^{\triangleright}]} \big( \mpuu{\aa'}^- - \mpuu{\aa'}^+ \big)
		+\mpuu{\triangleright}^\bb 
	\Big).
\end{align}
Next, combine \eqref{e.legendre.dual.intermediate} and \eqref{e.ratekpz.gradient} to get 
\begin{align}
	\label{e.mm.u.replation}
	\mmi{}_{\aa} 
	= 
	\mpuu{\aa}^- - \mpuu{\aa}^+,
	\quad
	\aa = 1,\ldots,n'.
\end{align}
Inserting \eqref{e.mm.u.replation} into \eqref{e.shock} gives
\begin{align}
	\label{e.shock.}
	\dotshock{\aa} 
	= 
	\frac{1}{2} \big( \mpuu{\triangleleft}^\bb + \mpuu{\triangleright}^\bb \big)
	+
	\sum_{\aa'\in[\aa^{\triangleleft},\aa^{\triangleright}]}\, \sgn(\shock{\aa'}-\shock{\aa}) \mmi_{\aa'}.
\end{align}
Let $\shock{\bb}:=\sum_{\cc\in\bb} \mm_\cc \shock{\cc}/\mm_\bb$.
Multiply both sides of \eqref{e.shock.} by $\mm_{\aa}$, sum both sides over all $\aa\in[\aa^{\triangleleft},\aa^{\triangleright}]$, and divide the result by $\mm_{\bb}$.
Doing so gives $\dotshock{\bb}=\frac{1}{2} (\mpuu{\triangleleft}^\bb +\mpuu{\triangleright}^\bb) $, which is constant.
Write the left hand side as $\dotshock{\bb}=\frac12(\ldots -\mm_{\bb\ominus 1}+\mm_{\bb\oplus 1}+\ldots)+\diner_\bb'$, for some constant $\diner_\bb'$.
Integrate $\dotshock{\bb}$ over $s\in[0,\termts]$, use $\shock{\bb}(0)=\optimal_\bb(0)$ and $\shock{\bb}(\termt)=0=\optimal_\bb(\termt)$, and compare the result with \eqref{e.optimal.frominertia}.
Doing so shows that $\diner_{\bb}'=\diner_\bb$.
Hence $\frac{1}{2} (\mpuu{\triangleleft}^\bb +\mpuu{\triangleright}^\bb)=\frac12(\ldots -\mm_{\bb\ominus 1}+\mm_{\bb\oplus 1}+\ldots)+\diner_\bb$.
Inserting this into \eqref{e.shock.} gives the desired result \eqref{e.match.ode}.

\subsection{Legendre transform}
\label{s.matching.legendre}
Here we prove the first statement in Theorem~\ref{t.matching}\ref{t.matching.legendre}, by showing that the Legendre transform of $\ratekpz(\vechv)$ gives $\momshe(\vecmm)$.
Once this is done, since $\ratekpz$ is strictly convex and since $\nabla\ratekpz:\hvspacec\to[0,\infty)^n$ is a homeomorphism, it will follow that $\momshe(\vecmm)$ is also strictly convex and is the Legendre transform of $\ratekpz(\vechv)$.
Still use the notation in Theorem~\ref{t.matching}.
Set $G := \vecmm\cdot\vechv - \ratekpz(\vechv)$, where the dot denotes the Euclidean inner product.
Our goal is to show $G=\momshe(\vecmm)$.
To this end, we devise a time-dependent version of $G$:
\begin{align}
	\label{e.matching.G}
	G(t) 
	:=
	\sum_{\cc=1}^{n} \mm_\cc \mps(t,\shock{\cc}(\termt-t))
	-
	\sum_{\bb\in\Branchs}
	\int_{t\tangent^{\bb}_{\triangleleft}/\termt}^{t\tangent^{\bb}_{\triangleright}/\termt}
	\d x \, \Big( \frac12 \mpu(t)^2 - \frac{1}{2} \big( \partial_x\parab(t) \big)^2 \Big).
\end{align}
At $t=\termt$, using $\mps(\termt)=\hf{}$, $\shock{\cc}(\termt-\termt)=\xx_\cc$, and Property~\ref{eu.shock.outside} verifies that $G(\termt)=G$. 
Using Properties~\ref{eu.shock.outside}--\ref{eu.shock.constant}, it is not hard to check that $\mps(t,\shock{\cc}(\termt-t))\to 0$ as $t\to 0$, for all $\cc$, and that $\mpu$ and $\partial_x\parab(t)$ are uniformly bounded on $\cone_\bb$ for all $\bb$.
These properties together give that $G(t)\to 0$ as $t\to 0$.
We seek to show that, for all $\intermt$ except when shocks merge,
\begin{align}
	\label{e.matching.goal}
	\big( \tfrac{\d~}{\d t}G \big)(\intermt)
	=
	\frac{1}{24} \sum_{\aa=1}^{n'} {\mmi_\aa}^3
	-
	\sum_{\aa=1}^{n'} \frac12 \mmi_\aa \, \dotshock{\aa}^2.
\end{align}
Once this is done, using $\shock{\cc}=\optimalc{\cc}$, integrating both sides over $t\in(0,\termts]$, and comparing the result with $\momshe(\vecmm)=\mom_{[0,\termts]}(\sum_{\cc}\mm_\cc\optimalc{\cc})$ (see \eqref{e.mom}) will yield the desired result $G=\momshe(\vecmm)$.

Let us differentiate the first sum in \eqref{e.matching.G} at $t=\intermt$ and simplify the result.
Take any $\intermt$ at which no shocks merge, write the first sum in \eqref{e.matching.G} as $\sum_{\aa=1}^{n'} \mmi_\aa \mps(t,\shock{\aa}(\termt-t))$, differentiate this expression in $t$ with the aid of Properties~\ref{eu.shock.diff}.
Doing so gives
\begin{align}
	\label{e.matching.G.sum1}
	\frac{\d~}{\d t}\text{(first sum in \eqref{e.matching.G})}\big|_{t=\intermt}
	=
	\sum_{\aa=1}^{n'} \mmi_\aa \cdot \big( \tfrac{1}{2} (\mpuu{\aa}^-)^2 - \dotshock{\aa}\cdot \mpuu{\aa}^- \big).
\end{align}
Combine \eqref{e.rankinehugoniot}, \eqref{e.ratekpz.gradient}, and \eqref{e.legendre.dual.intermediate} to get the relation
$
	\mpuu{\aa}^- = \frac{1}{2}\mmi_\aa + \dotshock{\aa}
$
and insert it into the right hand side of \eqref{e.matching.G.sum1}.
The result gives
$
	\frac{\d~}{\d t}\text{(first sum in \eqref{e.matching.G})}|_{t=\intermt}
	=
	\frac{1}{8} \sum_{\aa=1}^{n'} {\mmi_\aa}^3 - \sum_{\aa=1}^{n'}  \frac12 \mmi_\aa \,\dotshock{\aa}^2.
$

Next we treat the second sum in \eqref{e.matching.G}.
The contribution of differentiating the boundary points $t\tangent^{\bb}_{\triangleleft}/\termt$ and $t\tangent^{\bb}_{\triangleright}/\termt$ is zero, because the integrand evaluates to zero at those points by Property~\ref{eu.shock.outside}.
Next, recall from Property~\ref{eu.shock.constant} that $\mpu$ is piecewise constant within the integral in \eqref{e.matching.G}.
This gives 
\begin{align}
	\int_{t\tangent^{\bb}_{\triangleright}/\termt}^{t\tangent^{\bb}_{\triangleleft}/\termt} \d x \, \partial_t \tfrac12 (\mpu)^2
	=
	\frac12 \sum_{\aa:\Aranch(\aa)\subset\bb}((\mpuu{\aa}^+)^2-(\mpuu{\aa}^-)^2)\dotshock{\aa}.
\end{align}
Next, straightforward calculations verify the following relation.
(One can also use the machinery of entropy-entropy flux pairs to see it, which we will not do here.)
\begin{align}
	\label{e.matching.1}
	\int_{t\tangent^{\bb}_{\triangleright}/\termt}^{t\tangent^{\bb}_{\triangleleft}/\termt} \d x \, \partial_t \frac12 (\partial_x \parab(t))^2
	=
	\frac13 \big(\partial_x \parab(t,t\tangent^{\bb}_{\triangleleft}/\termt)\big)^3
	-
	\frac13 \big(\partial_x \parab(t,t\tangent^{\bb}_{\triangleright}/\termt)\big)^3.
\end{align}
By Property~\ref{eu.shock.outside}, the right hand side of \eqref{e.matching.1} is equal to $\frac{1}{3}\mpu(t,t\tangent^{\bb}_{\triangleleft}/\termt)^3-\frac{1}{3}\mpu(t,t\tangent^{\bb}_{\triangleright}/\termt)^3$.
Telescope the last expression into $\frac{1}{3}\sum_{\aa:\Aranch(\aa)\subset\bb}((\mpuu{\aa}^+)^3-(\mpuu{\aa}^-)^3)$.
So far, we have
\begin{align}
	\label{e.matching.G.sum2}
	\frac{\d~}{\d t}\text{(second sum in \eqref{e.matching.G})}\big|_{t=\intermt}
	=
	\sum_{\aa=1}^{n'} 
	\Big( 
		\frac{( (\mpuu{\aa}^+)^2 + (\mpuu{\aa}^-)^2 )(-\dotshock{\aa})}{2}
		-
		\frac{(\mpuu{\aa}^+)^3-(\mpuu{\aa}^-)^3}{3}
	\Big).
\end{align}
Inset \eqref{e.rankinehugoniot} into the right hand side of \eqref{e.matching.G.sum2} and simplify the result.
Doing so gives\\
$
	\frac{\d~}{\d t}\text{(second sum in \eqref{e.matching.G})}|_{t=\intermt}
	=
	-\sum_{\aa=1}^{n'}(\mpuu{\aa}^+-\mpuu{\aa}^-)^3/12,
$
which, by \eqref{e.mm.u.replation}, is equal to $-\sum_{\aa=1}^{n'}{\mmi_\aa}^3/12$.

Combining the results in the last two paragraphs gives \eqref{e.matching.goal} and hence completes the proof.

%%%%%%%%%%%%%%%%%%%%%%%%%%%%%%%%%%%%%%%%%%%%%%%%%%%%%%%%%%%%%%%%%%%%%%%%%%%%%%%%%%%%%%%%%%%%%%%%%%%%%%%%%%%%%%%%%%%%%%%%%%%%%%%

\appendix

\section{Basic properties}
\label{s.a.basic}

%\begin{proof}[Proof of the transformation \eqref{e.transformation.aBP}]
%Apply Tanaka's formula 
%$
%	\int_0^t \d s \, \delta_0(\X_\ii-\X_\jj)
%	=
%	-\int_0^t \d (\X_{\ii}-\X_{\jj}) \, \frac12 \sgn(\X_\ii-\X_\jj)
%	+
%	\frac{1}{2}|\X_\ii(s)-\X_\jj(s)| \, |_{s=0}^{s=t}
%$
%in \eqref{e.feynmankac}, note that $-\sum_{\ii<\jj}\int_0^t \d (\X_{\ii}-\X_{\jj}) \, \frac12 \sgn(\X_\ii-\X_\jj)$ has quadratic variation $t((\mm N)^3-(\mm N))/12$ under $\Ebm$, and use Girsanov's theorem. 
%Doing so gives \eqref{e.transformation.aBP}.
%We note that a similar application of Tanaka's formula was used in \cite{chen15} to access the one-point integer moments of the SHE.
%\end{proof}
%
%\begin{rmk}\label{r.conjugation}
%The transformation \eqref{e.transformation.aBP} can also be derived by conjugating the Hamiltonian of the delta Bose gas by its ground state; see \cite[Eq.~(5)--(6)]{ledoussal2022ranked}.
%\end{rmk}

\begin{proof}[Proof of the scaling identities~\eqref{e.rate.scaling}]
First, it is not hard to check that, for any $\lambda\in \mm\Psp(\R)$,
\begin{align}
	\label{e.scaling.relation}
	\cdf[\tfrac{1}{\mm}\scale_\mm\lambda](x)=\tfrac{1}{\mm}\cdf[\lambda](\mm x),
	\
	\quant[\tfrac{1}{\mm}\scale_m\lambda](a)=\tfrac{1}{\mm}\quant[\lambda](\mm a),
	\
	\Sgn[\tfrac{1}{\mm}\scale_\mm\lambda](x)=\tfrac{1}{\mm}\Sgn[\lambda](\mm x).
\end{align}
Using the preceding scaling relations for $\quant$ and $\Sgn$ in \eqref{e.rateq} and performing a change of variables $\mm a\mapsto a$ give the desired scaling relation $\rateq(\frac{1}{\mm}\scale_\mm\mu)=\rateq(\mu)/\mm^3$ for $\rateq$.
To prove the scaling relation for $\rate$, use \eqref{e.scaling.relation} in \eqref{e.rate} and call $\g(s,x):=\h(s,x/\mm)$.
After being simplified, the result reads
\begin{align}
	\label{e.scaling.1}
	\rate(\tfrac{1}{\mm}\scale_\mm\mu)
	=
	\sup_{\g} \Big\{ \frac{1}{\mm} \rateM(\mu,\g) - \frac{\mm}{2} \int_0^{\termt} \d s \, \ip{ \mu, (\partial_x\g)^2 } \Big\},
\end{align}
where the supremum runs over $\g\in\Cbsp^{1,1}([0,\termts],\R)$.
A supremum of this form can be expressed as a supremum of a Rayleigh quotient: For $u,v\in(0,\infty)$,
\begin{align}
	\label{e.rayleigh}
	\sup_{\g} \Big\{ u \rateM(\mu,\g) - \frac{v}{2} \int_{[0,\termts]} \d s \, \ip{ \mu, (\partial_x\g)^2 } \Big\} 
	= 
	\frac{
		\sup_{\g} (u\rateM(\mu,\g))^2
	}{
		2v\int_{[0,\termts]} \d s \, \ip{ \mu, (\partial_x\g)^2 }
	}.
\end{align}
with the convention that $ 0/0 := 0 $ and that $ b^2/0 := \infty $ for $ b \neq 0 $.
To see why, rewrite the supremum on the left hand side as a supremum over $ b\g $, with $ b \in\R $, and optimize over $ b $ for a fixed $ \g $.
Using the Rayleigh quotient expression in \eqref{e.scaling.1} gives $\rate(\tfrac{1}{\mm}\scale_\mm\mu)=\rate(\mu)/\mm^3$.
\end{proof}

\begin{proof}[Proof of Lemma~\ref{l.useful}\ref{l.useful.tailbd}--\ref{l.useful.quant.conti}]
To prove Part~\ref{l.useful.tailbd}, assume the contrary: There exist $ s_1,s_2,\ldots \in [0,\termts] $ and $ \e_0>0 $ such that $ \ip{\mu(s_k),\ind_{\R\setminus(-k,k)}} \geq \e_0>0 $ for all $k$.
After passing to a subsequence, we have $ s_k\to s_0\in[0,\termts] $.
Since $ \R\setminus(-k,k) $ is closed and since $ \mu(s) $ is continuous in $s$, $ \limsup_{\ell\to\infty} \ip{ \mu(s_\ell), \ind_{\R\setminus(-k,k)}} \leq \ip{\mu(s_0),\ind_{\R\setminus(-k,k)}} $ for every $ k\in\Z_{>0} $.
The left hand side is at least $ \e_0 $.
Sending $ k\to\infty $ gives $ 0<\e_0 \leq \liminf_{k\to\infty}\ip{\mu(s_0),\ind_{\R\setminus[-k,k]}} $, contradicting $\mu(s_0)\in\Psp(\R)$.

To prove Part~\ref{l.useful.quant.conti}, write $ \quant[\mu(s)](a) = \quant(s,a) $ and let $ \omega(s,a) := \limsup_{s'\to s} \quant(s',a) - \liminf_{s'\to s} \quant(s',a) $ denote the oscillation in time of $ \quant $ at $ (s,a) $.
Fix $k\in \Z_{>0}$. By \eqref{e.dist},
\begin{align}
\begin{split}
	\dist&(\mu(s_1),\mu(s_2)) 
\\
	&\geq 2^{-k} \min\Big\{
		1,\int_0^1 \d a \ |\quant(s_1,a)-\quant(s_2,a)| \ind\{|\quant(s_1,a)|\leq k\} \ind\{|\quant(s_2,a)|\leq k\}
	\Big\}.
\end{split}
\end{align}
This inequality and the continuity of $ \mu(s) $ in $s$ imply that the set $ \{a\in[0,1]: \omega(s,a)>0, \text{ for some } s \text{ with } |\quant(s,a)|<k\} $ has zero Lebesgue measure.
Combining this with the result of Part~\ref{l.useful.tailbd} gives the desired result.
\end{proof}

\begin{proof}[Proof of Properties~\ref{eu.rate0.timeadd}--\ref{eu.rate0.stationary=0} after \eqref{e.rate0.j}]
To prove Property~\ref{eu.rate0.timeadd}, observe that ${\Cbsp}^{1,1}([s_1,s_3],\R)$ can be embedded into ${\Cbsp}^{1,1}([s_1,s_2],\R)$ and ${\Cbsp}^{1,1}([s_2,s_3],\R)$, so the inequality $\geq$ follows.
For the reverse inequality, take any $\h_{12}\in{\Cbsp}^{1,1}([s_1,s_2],\R)$ and $\h_{23}\in{\Cbsp}^{1,1}([s_2,s_3],\R)$ and concatenate them as follows:
Take any $ \varphi \in \Csp^\infty(\R) $ that is increasing, with $ \varphi|_{(-\infty,0]} =0 $ and $ \varphi|_{[1,\infty)} =1 $, set $ \varphi_\delta(s) := \varphi(s/\delta) $, and consider 
\begin{align}
	\h_\delta(s,x) := \h_{12}(s,x)\varphi_\delta(s_2-s) + \h_{23}(s,x)\varphi_\delta(s-s_2) \in \Cbsp^{1,2}([s_1,s_3],\R).
\end{align}
Indeed, $ \int_{s_1}^{s_3}\d s \,\ip{\mu,(\partial_x\h_\delta)^2} \to \int_{s_1}^{s_2}\d s\,\ip{\mu,(\partial_x\h_{12})^2} + \int_{s_2}^{s_3}\d s \,\ip{\mu,(\partial_x\h_{23})^2} $ as $ \delta\to 0 $.
Next, write 
%$ 
%	\int_{s_1}^{s_3} \d s\, \ip{\mu,\partial_s\h_\delta} 
%	- \int_{s_1}^{s_2} \d s\, \ip{\mu,\partial_s\h_{12}} 
%	- \int_{s_2}^{s_3} \d s\, \ip{\mu,\partial_s\h_{23}} 
%$ 
%as 
%$ 
%	- \int_{s_2-\delta}^{s_2}\d s\,\ip{\mu,\h_{12} \partial_s\varphi_\delta}
%	+ \int_{s_2}^{s_2+\delta}\d s\,\ip{\mu,\h_{23} \partial_s\varphi_\delta} 
%	+ O(\delta) 
%$.
\begin{align}
	\int_{s_1}^{s_3} &\d s\, \ip{\mu,\partial_s\h_\delta} 
	- \int_{s_1}^{s_2} \d s\, \ip{\mu,\partial_s\h_{12}} 
	- \int_{s_2}^{s_3} \d s\, \ip{\mu,\partial_s\h_{23}} 
\\
	\label{e.appdx1}
	=&
	- \int_{s_2-\delta}^{s_2}\d s\,\ip{\mu,\h_{12} \partial_s\varphi_\delta}
	+ \int_{s_2}^{s_2+\delta}\d s\,\ip{\mu,\h_{23} \partial_s\varphi_\delta} 
	+ O(\delta) 	
\end{align}
It is not hard to check that \eqref{e.appdx1} converges to $ -\ip{\mu(s_2),\h_{12}(s_2)}+\ip{\mu(s_2),\h_{23}(s_2)} $ as $\delta\to 0$.
The preceding results together verify Property~\ref{eu.rate0.timeadd}.

Property~\ref{eu.rate0.timemono} follows from Property~\ref{eu.rate0.timeadd} and the property that $ \rate^\circ{}_{[s',s'']} \geq 0 $; the latter property is readily seen from \eqref{e.rate0.j}.
Property~\ref{eu.rate0.convex} follows since the expression within the supremum in \eqref{e.rate0.j} is linear in $ \mu $.
Property~\ref{eu.rate0.spacetranslation} follows by renaming the test function $ \h(s,x-y):=\til{h}(s,x) $ in \eqref{e.rate0.j}.
Property~\ref{eu.rate0.stationary=0} follows since for a time-independent $ \mu=\lambda\in\Psp(\R) $ the first two terms within the supremum in \eqref{e.rate0.j} cancel with each other, and the last term is nonpositive.
\end{proof}

\begin{proof}[Proof of \eqref{e.mom.rep}]
We consider $\mm=1$ only; the result for general $\mm>0$ follows from the result for $\mm=1$ through the scaling argument in the proof of \eqref{e.rate.scaling}.
Without loss of generality, assume $\partial_s\quant[\mu]\in\Lsp^2([0,\termts]\times[0,1])$; otherwise both sides of \eqref{e.mom.rep} are $+\infty$ by definition.
Write $ \quant[\mu(s)](a) = \quant(s,a) $ and $ \Sgn[\mu(s)](x) = \Sgn(s,x) $.
Expand the integrand in \eqref{e.rateq} and insert the result into the Left Hand Side (LHS) of \eqref{e.mom.rep} to get
\begin{align}
	\label{e.A}
	\text{LHS of \eqref{e.mom.rep}}
	=
	\frac{\termt}{24} + \Ip{ \mu(s)^{\otimes 2}, \tfrac{1}{4}|x-x'| }\Big|^{s=\termt}_{s=0}
	-
	\int_{0}^{\termts} \d s \int_{0}^{1} \d a \, \frac12 (\partial_s\quant)^2 + A_2 - A_3,
\end{align}
where
%$
%	A_1 := \int\!\!{}_{[0,\termts]} \d s \int\!\!{}_{[0,1]} \d a \, \frac12 (\partial_s\quant)^2,
%$
$
	A_2 := \int\!\!{}_{[0,\termts]} \d s \int\!\!{}_{[0,1]} \d a \, (\partial_s\quant)\, \Sgn,
$
and
$
	A_3 := \int\!\!{}_{[0,\termts]} \d s \int\!\!{}_{[0,1]} \d a \, \frac12 \Sgn^2.
$
%The term $-A_1$ contributes to the last term in \eqref{e.mom.rep}.
For $A_2$, we claim that we can replace $\Sgn(s,a)$ with $(\tfrac12 - a)$ to get
\begin{align}
	\label{e.A2.claim}
	A_2 
	= 
	\int_{0}^{\termts} \d s \int_0^1 \d a \, \big( \partial_s\quant[\mu] \big)\cdot\big(\tfrac12 - a\big).
\end{align}
Recall $a_\pm[\mu(s)](a)$ from \eqref{e.a+-} and write them as $a_\pm(s,a)$.
Given \eqref{e.Sgn.id}, this claim follows if we can show, for almost every $(s,a)$, $(\partial_s\quant)(s,\Cdot)$ is almost everywhere a constant on $(a_-(s,a),a_+(s,a))$.
Fix any $(s_0,a_0)$ such that $\quant$ is differentiable in $s$ at $(s_0,a_0)$ and write $a_{\pm}(s_0,a_0)=:a_{0,\pm}$.
For every $a\in(a_{0,-},a_0)$, $\quant(s_0,a)=\quant(s_0,a_0)$ and $\quant(s,a)\leq\quant(s,a_0)$ for all $s$.
If $\quant$ is differentiable in $s$ at $(s_0,a)$, the preceding properties force $(\partial_s\quant)(s_0,a)=(\partial_s\quant)(s_0,a_0)$.
The same conclusion holds for $a\in(a_0,a_{0,+})$.
Under the assumption that $\partial_s\quant[\mu]\in\Lsp^2([0,\termts]\times[0,1])$, $\quant$ is differentiable in $s$ Lebesgue a.e., so the claim follows.
Evaluate the $s$ integral in \eqref{e.A2.claim}, use \eqref{e.Sgn.id.int} for $\f(x)=x$ in reverse, and use Lemma~\ref{l.useful}\ref{l.useful.inverseCDF} in reverse to get $A_{2}=\ip{\mu(s),x\Sgn[\mu(s)]}|^{s=\termt}_{s=0}$.
Recalling $\Sgn[\mu(s)]$ from \eqref{e.Sgn}, we recognize the last expression as $A_{2}=\ip{ \mu(s)^{\otimes 2}, \frac{1}{2}x\,\sgn(x'-x) } \,|^{s=\termt}_{s=0}$.
Since the measure $\mu(s)^{\otimes 2}$ is symmetric in $x$ and $x'$, we can symmetrize the function $\frac{1}{2}x\,\sgn(x'-x)$ to get 
\begin{align}
	\label{e.A2.}
	A_{2}
	=
	\ip{ \mu(s)^{\otimes 2}, \tfrac{1}{4}(x-x')\sgn(x'-x) } \, \big|^{s=\termt}_{s=0}
	=
	-\ip{ \mu(s)^{\otimes 2}, \tfrac{1}{4}|x-x'| } \, \big|^{s=\termt}_{s=0}.
\end{align}
As for $A_3$, the identity \eqref{e.Sgn.id} gives 
\begin{align}
	A_3 
	= 
	\frac12 \int_{0}^{\termts} \d s \Big( \int_{0}^{1} \d a 
		\,\big(\tfrac12-a\big)^2 - \sum \int_{a_-}^{a_+}\d a \, \Big(\big(\tfrac12-a\big)^2 - \big(\tfrac{1-a_--a_+}{2}\big)^2\Big)
	\Big), 
\end{align}
where the sum runs over all pairs of $(a_-,a_+)$ such that $a_-<a_+$.
Evaluating the integrals and simplifying the result give
\begin{align}
	\label{e.A3}
	A_3
	=
	\frac{\termt}{24} - \int_{0}^{\termts} \d s \, \sum \frac{1}{24}(a_+-a_-)^3
	=
	\frac{\termt}{24} - \int_{0}^{\termts} \d s \,\sum_{x}\frac{1}{24}\ip{\mu,\ind_{\{x\}}}^3,
\end{align}
where the sum over $x$ runs over atoms of $\mu(s)$. 
Inserting \eqref{e.A2.} and \eqref{e.A3} into \eqref{e.A} gives \eqref{e.mom.rep}.
\end{proof}

\section{Proof of \eqref{e.lwbd.plapprox}}
\label{s.a.approx}
We take $[s',s'']=[0,\termts]$ (whence $\rateq_{[0,\termts]}(\Cdot):=\rateq(\Cdot)$) to simplify notation.

We say \textbf{$\mu$ well approximates $\mu'$ to within $\e$} if $\dist_{[0,\termts]}(\mu,\mu')<\e$ and $|\rateq(\mu)-\rateq(\mu')|<\e$.
We say \textbf{a class of deviations well approximates $\mu'$} if, for every $\e>0$, there exists a deviation in that class that well approximates $\mu'$  to within $\e$.

As the first step, we well approximate $\mu$ by clustering deviations.
For $n\in\Z_{>0}$, apply the procedure in Section~\ref{s.basic.divide} with $\mm_1=\ldots=\mm_n=1/n$ to get $\mu_c$, $\cc=1,\ldots,n$.
As explained there, each $\mu_\cc(s)$ is continuous in $s$.
Let $x_{\gamma,\cc}(s) := n \ip{\mu_\cc(s),x} $, which belongs to $\Csp[0,\termts]$ thanks to the continuity of $\mu_\cc$ and thanks to Assumption~\ref{assu.truncation}.
Set $\gamma^{(n)}(s):= \frac{1}{n}\sum_{\cc} \delta_{x_{\gamma,\cc}(s)}$, which is clustering.
Consider (twice of) the total variation norm
$	
	\norm{ \lambda-\lambda' }_\mathrm{tv}
	:=
	\sup_{x\in\R} | \cdf[\lambda](x) - \cdf[\lambda'](x) |.
$
We claim that 
$
	\norm{ \gamma^{(n)}(s)-\mu(s) }_\mathrm{tv}
	\leq
	1/n.
$
To see why, consider $S_\cc:=\{ x: \cdf[\mu(s)](x) \in [(\cc-1)/n,\cc/n) \} $ for $\cc=1,\ldots,n$ separately.
Within each $S_\cc$, the function $\cdf[\gamma^{(n)}(s)]$ makes a single jump from the level $(\cc-1)/n$ to the level $\cc/n$; this is seen from the construction of $\gamma^{(n)}$.
Hence the difference $|\cdf[\gamma^{(n)}(s)](x) - \cdf[\mu(s)](x)|$ is at most $1/n$, so the claim follows.
We have that $\gamma^{(n)}(s)\to\mu(s)$ in the total variation norm, uniformly over $s\in[0,\termts]$.
This implies $\dist_{[0,\termt]}(\gamma^{(n)},\mu)\to 0$.
Next, we show $\rateq(\gamma^{(n)})\to\rateq(\mu)$.
Note that by construction,
$
	\quant[\gamma^{(n)}(s)]|_{a\in I_\cc} = \gamma_\cc(s) := n \ip{\mu(s),x} = n \int_{I_\cc} \d a\, \quant[\mu(s)](a),
$
where $ I_\cc := [(\cc-1)/n,\cc/n) $.
Namely, $\quant[\gamma^{(n)}(s)]$ is obtained by averaging $\quant[\mu(s)]$ over intervals of length $1/n$.
By using this property and $\partial_s\quant[\mu]\in\Lsp^2([0,\termts]\times[0,1])$, which follows from the assumption $\rate(\mu)=\rateq(\mu)<\infty$, it is not hard to check that $\partial_s\quant[\gamma^{(n)}] \to \partial_s\quant[\mu]$ in $\Lsp^2([0,\termts]\times[0,1])$.
Also, $\norm{ \Sgn[\gamma^{(n)}(s)]-\Sgn[\mu(s)]}_\infty \leq \norm{\frac12\sgn}_\infty\norm{\gamma^{(n)}(s)-\mu(s)}_\mathrm{tv} = \frac12\norm{\gamma^{(n)}(s)-\mu(s)}_\mathrm{tv} \leq 1/(2n) $.
Combining these properties gives $ \rateq(\gamma^{(n)}) \to \rateq(\mu) $.

As the second step, writing $\gamma^{(n)}=:\gamma$ to simplify notation and fixing an arbitrary $\e>0$, we well approximate $\gamma$ by a finitely-changing-clustering $\zeta$ to within $\e$.
A deviation $\zeta$ being \textbf{finitely-changing-clustering} means that it is clustering and that there exist $0=s_0<s_1<\ldots<s_\ell=\termt$ such that, within each $(s_{k-1},s_k)$, each pair of clusters either never touch or completely coincide.
We will construct a sequence $\zeta^{(1)},\ldots,\zeta^{(n)}$ of clustering deviations that satisfy the following conditions.
\begin{enumerate}[leftmargin=20pt, label=(\roman*)]
\item \label{eu.finitely.changing.1}
$\dist_{[0,\termts]}(\zeta^{(j-1)},\zeta^{(j)})\leq \e/n$ and $|\rateq(\zeta^{(j-1)})-\rateq(\zeta^{(j)})|\leq \e/n$, with the convention $\zeta^{(0)}:=\gamma$.
\item \label{eu.finitely.changing.2}
Each $\zeta^{(j)}$ has $n$ clusters $\zeta^{(j)}_{1}\leq\ldots\leq\zeta^{(j)}_n$ of mass $1/n$, and the finitely-changing property holds up to the index $j$.
More precisely, there exist $0=s_0<s_1<\ldots<s_\ell=\termt$, which may depend on $j$, such that within each $(s_{k-1},s_k)$, each pair in $\zeta^{(j)}_1,\ldots,\zeta^{(j)}_j$ either never touches or completely coincides.
\end{enumerate}
Once constructed, $\zeta^{(n)}=:\zeta$ gives the finitely-change clustering deviation.

We now construct the $\zeta^{(j)}$s by induction on $j$.
For $j=1$, simply take $\zeta^{(1)}=\gamma$.
Assume $\zeta^{(1)},\ldots,\zeta^{(j-1)}$ have been constructed.
We will construct $\zeta^{(j)}=:\zeta^\new$ out of $\zeta^{(j-1)}=:\zeta^\old$.
To begin the construction, keep all but the $j$th clusters unchanged: $\zeta^\new_{j'} := \zeta^\old_{j'}$ for all $j'\neq j$.
Next, to construct the $j$th cluster for $\zeta^\new$, consider $O=\{s\in(0,\termts):\zeta^\old_{j-1}(s)<\zeta^\old_{j}(s)\}$, which is open, and write $O$ as the union of countable disjoint open intervals $(a_1,b_1),(a_2,b_2),\ldots$.
By Condition~\ref{eu.finitely.changing.1}, $\rateq(\zeta^\old)<\infty$, so $\dot{\zeta}^\old_\cc\in\Lsp^2[0,\termts]$ for all $\cc$.
Given this property, find an $\ell_0$ large enough such that, with $O'':=\cup_{\ell>\ell_0} (a_\ell,b_\ell)$,
\begin{align}
	\label{e.finitelychanging.1}
	&&\int_{O''} \d s \, \sum_{\cc=j-1,j} \frac{1}{n}\big| \dot{\zeta}^{\old}_{\cc} \big| \leq \frac{\e}{n},
	&&
	\int_{O''} \d s \, \frac{1}{2n}\sum_{\cc=j-1,j}  \big( \big|\dot{\zeta}^{\old}_{\cc} \big|+ 1 \big)^2 \leq \frac{\e}{n}.&
\end{align}
Set $O':=(a_1,b_1)\cup\ldots\cup(a_{\ell_0},b_{\ell_0})$.
Keep the $j$th cluster unchanged within $O'$ and perturb it to match the $(j-1)$th cluster outside $O'$.
More explicitly, $\zeta^\new_{j}|_{O'} := \zeta^\old_{j}|_{O'}$ and $\zeta^\new_{j}|_{[0,\termts]\setminus O'} := \zeta^\old_{j-1}|_{[0,\termts]\setminus O'}$.

We next check that the $\zeta^\new$ so constructed satisfies the required conditions.
First, given that $\zeta^\old$ satisfies Condition~\ref{eu.finitely.changing.2} up to the index $j-1$, it is not hard to check that $\zeta^\new$ satisfies Condition~\ref{eu.finitely.changing.2} up to the index $j$.
Move on to checking Condition~\ref{eu.finitely.changing.1}.
Recall that $\zeta^\new$ differs from $\zeta^\old$ only at the $j$th cluster.
Further, the difference occurs only on $O''$.
%To see why, note that $\zeta^\new_j|_{O'} = \zeta^\old_j|_{O'}$  by construction and that $\zeta^\old_{j}|_{[0,\termts]\setminus O}=\zeta^\old_{j-1}|_{[0,\termts]\setminus O}=\zeta^\new_j|_{[0,\termts]\setminus O}$ by the definition of $O$ and by the construction of $\zeta^\new$.
On $O''$, where $\zeta^\new_j$ and $\zeta^\old_j$ differ, we have $\zeta^\new_j|_{O''} = \zeta^\old_{j-1}|_{O''}$.
These properties together with \eqref{e.clustering.rateq} and $|\Sgn[\Cdot]|\leq 1/2$ give 
\begin{align}
	\label{e.finitelychanging.2}
	\sup_{s\in[0,\termts]}\frac{1}{n}\sum_{\cc=1}^{n}\big|\zeta^\new_\cc(s)-\zeta^\old_\cc(s)\big|
	&\leq
	\int_{O''} \d s\, \frac{1}{n}\big|\dot{\zeta}^\old_{j-1}-\dot{\zeta}^\old_{j} \big|\,,
\\
	\label{e.finitelychanging.3}
	|\rateq(\zeta^\new)-\rateq(\zeta^\old)|
	&\leq 
	\int_{O''} \d s \, \frac{1}{2n} \sum_{\cc=j-1,j} \big( |\dot\zeta^\old_{\cc}| + \tfrac12 \big)^2.
\end{align}
By \eqref{e.finitelychanging.1}, the right hand sides of \eqref{e.finitelychanging.2}--\eqref{e.finitelychanging.3} are bounded by $\e/n$.
By \eqref{e.wass.coupling}, the left hand side of \eqref{e.finitelychanging.2} bounds $\dist_{[0,\termts]}(\zeta^\new,\zeta^\old)$ from above.

Finally, we well approximate $\zeta$ by PL-clustering deviations.
Let $0=s_0<s_1<\ldots<s_\ell=\termt$ prescribe the intervals on which the clusters of $\zeta$ either never touch or completely coincide.
Further partition each $[s_{k-1},s_k]$ into smaller intervals of equal length.
Linearly interpolate the trajectories of the clusters of $\zeta$ with respect to the smaller intervals.
Use the resulting piecewise linear trajectories to build a PL-clustering $\xi$.
Within each $(s_{k-1},s_{k})$, since the clusters of $\zeta$ either never touch or completely coincide, the same property holds for $\xi$.
By using this property, \eqref{e.wass.coupling}, and \eqref{e.clustering.rateq}, it is not hard to check that, as the mesh of the sub partitions tends to zero, $\dist_{[0,\termts]}(\xi,\zeta)\to 0$ and $\rateq(\xi)\to\rateq(\zeta)$.

\section{Girsanov's transform}
\label{s.a.girsanov}
Here, we pack Girsanov's transform in ways convenient for our applications.
Fix $[s',s'']\subset[0,\termts]$, let $\P$ be the law of $(\X_{\ii}(s))_{s\in[Ts',Ts''],\ii=1,\ldots,N}$ under \eqref{e.aBP} given $(\X_\ii(Ts'))_{\ii=1,\ldots,N}$, take deterministic $v_\ii$ and $J_\ii\subset\{1,\ldots,N\}$, $\ii=1,\ldots,N$, and consider the law $\law$ such that
\begin{align}
	\text{under } \law, 
	\quad
	\d \X_\ii = \sum_{\jj\in\{1,\ldots,N\}\setminus J_\ii} \frac{1}{2} \sgn(\X_\jj-\X_\ii) \,\d s + N v_{\ii} \,\d s + \d\bm_{\ii},
	\
	s\in[Ts',Ts''],
\end{align}
with the same given $(\X_\ii(Ts'))_{\ii=1,\ldots,N}$.

\begin{lem}
\label{l.girsanov}
\begin{enumerate}[leftmargin=20pt, label=(\alph*)]
\item[]
\item \label{l.girsanov.1}
Assume $J_\ii=\emptyset$ for all $\ii$ and take any $r'\in(0,\infty)$.
For any measurable $\calE\subset\Csp([s',s''],\R)^N$ with $\calE\subset\{ |\bm_{\ii}(Ts)-\bm_{\ii}(Ts')| < NTr', \forall s\in[s',s''],\ii=1,\ldots,N \}$,
\begin{align}
	\label{e.girsanov.1}
	\frac{1}{N^3 T}\log\P\big[\calE\big]
	\geq
	\frac{1}{N^3 T}\log\law\big[\calE\big]
	-
	\frac{(s'-s)}{2N}\sum_{\ii=1}^N v_\ii^2
	-
	\frac{r'}{N}\sum_{\ii=1}^N |v_\ii|.
\end{align}
\item \label{l.girsanov.2}
For any $p>1$ and any measurable $\calE\subset\Csp([s',s''],\R)^N$,
\begin{align}
	\label{e.girsanov.2}
	\frac{1}{N^3T}\log\P\big[\calE\big]
	\geq
	\frac{p}{N^3T}\log\law\big[\calE\big]
	-
	\frac{p\,(s''-s')}{p-1}\frac{1}{2N}\sum_{\ii=1}^N \Big( |v_\ii| + \frac{|J_{\ii}|^2}{2N} \Big)^2.
\end{align}
\end{enumerate}
\end{lem}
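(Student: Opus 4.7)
The plan is to apply the Cameron--Martin--Girsanov formula to the change of measure between $\P$ and $\law$. Writing $\mu_\ii := Nv_\ii - \tfrac12\sum_{\jj\in J_\ii}\sgn(\X_\jj-\X_\ii)$ for the extra drift given to the $\ii$th particle when passing from $\P$ to $\law$, Girsanov yields
\[
\log\tfrac{d\law}{d\P} = \sum_\ii \int_{Ts'}^{Ts''} \mu_\ii \, d\bm_\ii - \tfrac12\sum_\ii\int_{Ts'}^{Ts''}\mu_\ii^2\,ds,
\]
with $\bm_\ii$ the driving Brownian motions from~\eqref{e.aBP}. Both parts will hinge on the identity $\P[\calE] = \E_\law[(d\P/d\law)\ind_\calE]$ together with suitable control on $d\P/d\law$ or its $\P$-moments.

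For part~\ref{l.girsanov.1}, the assumption $J_\ii=\emptyset$ makes $\mu_\ii=Nv_\ii$ purely deterministic, so the stochastic integral collapses to $Nv_\ii(\bm_\ii(Ts'')-\bm_\ii(Ts'))$. On $\calE$ one has $|\bm_\ii(Ts'')-\bm_\ii(Ts')|\leq NTr'$, which produces a deterministic pathwise lower bound on $\log(d\P/d\law)$ across all of $\calE$. Combining this with the crude inequality $\P[\calE]\geq\law[\calE]\cdot\inf_\calE(d\P/d\law)$ and dividing by $N^3T$ delivers~\eqref{e.girsanov.1}.

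For part~\ref{l.girsanov.2}, the $J_\ii$-interaction term in $\mu_\ii$ is state-dependent, so no pathwise lower bound on $\log(d\P/d\law)$ along $\calE$ is available. Instead, I will apply H\"older's inequality with conjugate exponents $p$ and $p/(p-1)$ to $\law[\calE]=\E_\P[(d\law/d\P)\ind_\calE]$, which rearranges to
\[
\log\P[\calE] \geq p\log\law[\calE] - (p-1)\log\E_\P\big[(d\law/d\P)^{p/(p-1)}\big].
\]
To control the moment, write $(d\law/d\P)^\lambda = M_\lambda\cdot\exp\big(\tfrac12\lambda(\lambda-1)\sum_\ii\int_{Ts'}^{Ts''}\mu_\ii^2\,ds\big)$, where $M_\lambda$ is the stochastic exponential attached to the drift $\lambda\mu_\ii$. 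Since $|\mu_\ii|\leq N|v_\ii|+|J_\ii|/2$ is uniformly bounded, Novikov's condition applies and $\E_\P[M_\lambda]=1$. The pathwise estimate $\sum_\ii\int_{Ts'}^{Ts''}\mu_\ii^2\,ds \leq N^2T(s''-s')\sum_\ii(|v_\ii|+|J_\ii|/(2N))^2$ then bounds the exponential factor deterministically; plugging in $\lambda=p/(p-1)$ and using $\lambda(\lambda-1)=p/(p-1)^2$ yields~\eqref{e.girsanov.2}.

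The main conceptual hurdle is precisely the gap between parts~\ref{l.girsanov.1} and~\ref{l.girsanov.2}: because the added drift in~\ref{l.girsanov.2} is random, one cannot bound $\log(d\P/d\law)$ pathwise on $\calE$, and the H\"older detour trades a factor of $p$ multiplying $\log\law[\calE]$ for a moment bound that depends only on the deterministic upper bound on $|\mu_\ii|$. Both the Novikov verification and the deterministic bound on $\sum_\ii\int\mu_\ii^2\,ds$ are immediate from $|\mu_\ii|\leq N|v_\ii|+|J_\ii|/2$, so the rest is routine bookkeeping.
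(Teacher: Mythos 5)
Your proposal is correct and takes essentially the same route as the paper. Part~\ref{l.girsanov.1} is identical: since $J_\ii=\emptyset$ the exponent of the Radon--Nikodym derivative is a linear functional of the Brownian increments plus a deterministic quadratic-variation term, and the constraint built into $\calE$ gives a pathwise lower bound, so $\P[\calE]\geq\law[\calE]\cdot\inf_\calE (d\P/d\law)$ suffices. For Part~\ref{l.girsanov.2} the paper applies H\"{o}lder under $\law$ directly to $\law[\ind_\calE]=\law[\ind_\calE\,e^{A-\ip{A}/2}\cdot e^{-(A-\ip{A}/2)}]$, arriving at $\law[\calE]\leq\P[\calE]^{1/p}\cdot\law[(d\law/d\P)^{1/(p-1)}]^{(p-1)/p}$, whereas you apply H\"{o}lder under $\P$ to $\law[\calE]=\E_\P[(d\law/d\P)\ind_\calE]$ and obtain the moment $\E_\P[(d\law/d\P)^{p/(p-1)}]$; these two expectations are identical (change measure once), and both are then controlled by peeling off a stochastic exponential, invoking Novikov via the deterministic bound $|\mu_\ii|\leq N|v_\ii|+|J_\ii|/2$, and bounding the residual factor pathwise. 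So the two arguments coincide up to the (equivalent) choice of reference measure in H\"{o}lder.
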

\begin{proof}
Set 
$
	A:= \int_{Ts'}^{Ts''}\sum_{\ii=1}^N \d \bm_\ii\,( Nv_\ii - \sum_{\jj\in J_\ii} \sgn(\X_{\jj}-\X_{\ii})/2 )
$
and let
$
	\ip{A}
	=
	\int_{Ts'}^{Ts''}\sum_{\ii=1}^N \d s\,( Nv_\ii - \sum_{\jj\in J_\ii} \sgn(\X_{\jj}-\X_{\ii})/2 )^2
$
denote its quadratic variation.
Slightly abusing notation, we write the \emph{expectation} under $\law$ also as $\law$.
Girsanov's transform gives
$ 
	\P[\calE] = \law[\ind_\calE\exp(A-\ip{A}/2)].
$
For Part~\ref{l.girsanov.1}, using
$
	\ind_{\calE}A\geq-N^2Tr'\sum_{\ii}|v_{\ii}|
$
and
$
	\ip{A} = N^{2}T(s''-s')\sum_{\ii} v_\ii^2
$
gives the desired result.
For Part~\ref{l.girsanov.2}, use H\"{o}lder's inequality
\begin{align}
	\label{e.holder}
	\law[\calE]
	\leq
	\law[\ind_\calE \exp(A-\tfrac{1}{2}\ip{A})]^{1/p}
	\law[\exp(\tfrac{1}{(p-1)}(-A+\tfrac{1}{2}\ip{A}))]^{(p-1)/p},
\end{align}
evaluate the last expectation
\begin{align}
	\law\Big[e^{\frac{1}{p-1}(-A+\frac12\ip{A})}\Big]
	=
	\law\Big[\exp\Big(\frac{p}{2(p-1)^2} \int_{Ts'}^{Ts''}\d s \sum_{\ii=1}^{N} \Big(Nv_\ii + \sum_{\jj\in J_\ii} \frac{1}{2} \sgn(\X_{\jj}-\X_{\ii}) \Big)^2\, \Big],
\end{align}
bound the summand in $\sum_{\ii}$ by $(N|v_{\ii}|+|J_{\ii}|/2)^2$, and
replace $\law[\ind_\calE \exp(A-\ip{A}/2)]$ with $\P[\calE]$ in \eqref{e.holder}.
Doing so gives
\begin{align}
	\law[\calE]
	\leq 
	\P[\calE]^{1/p} \,\exp\Big(\frac{(s''-s')T}{2(p-1)} \sum_{\ii=1}^{N} \Big( N|v_{\ii}|+\frac{|J_{\ii}|}{2N}\Big)^2\Big).
\end{align}
Applying $\frac{p}{N^3T}\log(\,\Cdot\,)$ to both sides and simplifying the result give \eqref{e.girsanov.2}.
\end{proof}

\bibliographystyle{alpha}
\bibliography{kpz-ldp,moments,rank}

\end{document}